\theoremstyle{plain}
\newtheorem{thm}{\protect\theoremname}
\theoremstyle{definition}
\newtheorem{defn}[thm]{\protect\definitionname}
\newtheorem{example}[thm]{Examples}
\newtheorem{ass}[thm]{Assumptions}
\theoremstyle{remark}
\newtheorem{rem}[thm]{\protect\remarkname}
\theoremstyle{plain}
\newtheorem{lem}[thm]{\protect\lemmaname}
\theoremstyle{plain}
\newtheorem{prop}[thm]{\protect\propositionname}
\theoremstyle{plain}
\newtheorem{cor}[thm]{\protect\corollaryname}
\newcommand{\eh}{\frac{1}{2}}
\newcommand{\IP}{\mathbb{P}}
\newcommand{\IE}{\mathbb{E}}
\newcommand{\uX}{\underline{X}}
\newcommand{\ux}{\underline{x}}
\newcommand{\uN}{\underline{N}}
\newcommand{\om}{\overline{m}}
\newcommand{\IR}{\mathbb{R}}
\newcommand{\supp}{\textnormal{supp}}
\date{}
\providecommand{\corollaryname}{Corollary}
\providecommand{\definitionname}{Definition}
\providecommand{\lemmaname}{Lemma}
\providecommand{\propositionname}{Proposition}
\providecommand{\remarkname}{Remark}
\providecommand{\theoremname}{Theorem}
\begin{document}
\title{Collective Bias Models in Two-Tier Voting Systems and the Democracy
Deficit}
\author{Werner Kirsch\thanks{FernUniversit\"at in Hagen, Germany, werner.kirsch@fernuni-hagen.de}
\;and Gabor Toth\thanks{IIMAS-UNAM, Mexico City, Mexico, gabor.toth@iimas.unam.mx}}
\maketitle
\begin{abstract}
\noindent We analyse optimal voting weights in two-tier voting systems. In our model,
the overall population (or union) is split in groups (or member states) of different 
sizes. The individuals comprising the overall population constitute the first tier, and the council is the second tier. Each group has a representative in the council that casts votes on their behalf. By `optimal weights', we mean voting weights in the council which
minimise the democracy deficit, i.e.$\!\,$ the expected deviation of the council vote
from a (hypothetical) popular vote. 

\noindent We assume that the voters within each group interact via what we 
call a local collective bias or common belief (through tradition, common values, strong religious beliefs, etc.). We allow in addition an interaction across
group borders via a global bias. Thus, the voting behaviour of each voter depends on the behaviour
of all other voters. This correlation may be stronger between voters in the same group, but is in general
not zero for voters in different groups.

\noindent We call the respective voting measure a Collective Bias Model (CBM). The `simple CBM' introduced
in \cite{HOEC} and in particular the Impartial Culture and the Impartial Anonymous Culture are special cases of our general model.

\noindent We compute the optimal weights in the large population limit. Those optimal weights are
unique as long as there is no `complete' correlation between the groups. In this case, we obtain optimal weights which are the sum of a common constant equal for all groups and a summand which is proportional to the population of each group. If the correlation between
voters in different groups is extremely strong, then the optimal weights are not unique. 
In fact, in this case, the weights are essentially arbitrary. We also analyse the conditions under which the optimal weights are negative, thus making it impossible to reach the theoretical minimum of the democracy deficit. This is a new aspect of the model owed to the correlation between votes belonging to different groups.

\end{abstract}
Keywords. Two-tier voting systems, probabilistic voting, collective
bias models, democracy deficit, optimal weights, limit theorem.

2020 Mathematics Subject Classification. 91B12, 91B14, 60F05.

\section{Introduction}

We study voting in two-tier voting systems.  Suppose the population of a state or union of states is subdivided
into $M$ groups (member states for example). Each group sends a representative
to a council which makes decisions for the union. The representatives cast their vote (`aye' or `nay') according to
the majority (or to what they believe is the majority) in their respective  group. Since the groups
may differ in size, it is natural to assign different voting
weights to the representatives, reflecting the size of the respective group. When a parliament such as a the House of Representatives in the U.S. is elected, usually the country is subdivided into a number of districts of roughly equal population, each of which votes on a representative for a single seat. This procedure is feasible within a country but may not be possible in other situations. Even in the U.S., no effort has been made to divide the states and reassemble them into districts of roughly equal size so that each of them could have the same number of senators without giving rise to questions whether that is the `right' way to determine the number of senators. It is even less likely that sovereign countries -- such as the members of the United Nations or the European Union -- would be willing to submit to being divided into districts of equal size. Thus, it is not possible in practice to circumvent the question of how to assign voting weights to groups of different sizes.

To determine these weights is the problem of `optimal' weights. How should the weights be assigned? One objective
studied in the literature is to minimise the democracy deficit, i.e.\!
the deviation of the council vote from a hypothetical
referendum across the entire population. The democracy deficit was first studied for binary voting (the same setting which is considered in the present article) by Felsenthal and Machover \cite{FM1999}. Later on, it was also analysed in other settings by several authors (see e.g.\! \cite{FelsenthalM,HOEC,SlomZy,KiLa,MN2012,Toth}). Other notions of optimal weights are based on welfare considerations or the criterion of equalising the influence of all voters belonging to the overall population. In the latter vein, we find the seminal article by Penrose \cite{Pen1946}, where the square root law was first established as the assignation rule for voting weights that equalises the probability of each voter's being decisive in a two-tier voting system under the assumption of stochastically independent voting. Other contributions to the study of optimal voting weights under welfare and influence frameworks can be found in \cite{MN2012,BB2007,KMTL2013,KMN2017}. Correlated voting across groups was also analysed by Kaniovski and Zaigraev in \cite{KanZai2009}.

Suppose the overall population is of size $N$, whereas the group size is
 $N_{\lambda}$, where the subindex $\lambda$ stands for
the group $\lambda\in\left\{ 1,\ldots,M\right\} $ . Let the two voting alternatives
be encoded as $\pm1$, $+1$ for `aye' and $-1$
for `nay'. The vote of voter $i\in\left\{ 1,\ldots,N_{\lambda}\right\} $ in group $\lambda $ will be denoted by $X_{\lambda i}$.

\begin{defn}
For each group $\lambda$, we define the voting margin $S_{\lambda}:=\sum_{i=1}^{N_{\lambda}}X_{\lambda i}$.
The overall voting margin is $S:=\sum_{\lambda=1}^{M}S_{\lambda}$.
\end{defn}

Each group casts a vote in the council:
\begin{defn}
The council vote of group $\lambda$ is given by
\[
\chi_{\lambda}:=\begin{cases}\,
\phantom{-}1, & \text{if }S_{\lambda}>0,\\
\,-1, & \text{otherwise.}
\end{cases}
\]
\end{defn}

The (representative of) group $\lambda$ votes `aye' if there is a majority in group $\lambda $ on the issue in question. Each group $\lambda$ is assigned a weight $w_\lambda$. The weighted sum $\sum_{\lambda=1}^{M}w_{\lambda}\chi_{\lambda}$
is the council vote. The council vote is in favour of a proposal if
$\sum_{\lambda=1}^{M}w_{\lambda}\chi_{\lambda}>0$. Weights $w_{1},\ldots, w_{M} $ together with a relative quota $q\in(0,1)$ constitute
a weighted voting system for the council, in which a coalition $C\subset \{ 1,2,\ldots,M \}$ is winning if
\begin{align*}
   \sum_{i\in C}\,w_{i}~>~q\;\sum_{i=1}^{M}\,w_{i}.
\end{align*}
We will exclusively consider the majority rule with $q=1/2$ in this article.

It is reasonable to choose the voting weights $w_{\lambda}$ in the council in such a way, that
the raw democracy deficit
\begin{align*}
   \left|\,S-\sum_{\lambda=1}^{M}\,w_{\lambda}\,\chi_{\lambda}\,\right|
\end{align*}
is as small as possible. For a given set of weights, each configuration of all $N$ votes induces a certain raw democracy deficit. It is immediately clear that in general there is no choice of weights which makes
this variable small uniformly over all possible distributions of Yes-No-votes across the overall population. All we can hope for is to
make it small `on average', more precisely we try to minimise the expected quadratic deviation
of $\sum_{\lambda=1}^M w_{\lambda}\chi_{\lambda}$ from $S$.

To follow this approach, we have to clarify what we mean by
`expected' deviation, i.e.\! there has to be some notion of randomness underlying the voting procedure.

While the votes cast are assumed to be deterministic and rational, obeying the
voters' preferences which we do not model explicitly, the proposal
put before them is assumed to be unpredictable, i.e.\! random. Since each yes/no
question can be posed in two opposite ways, one to which a given voter
would respond `aye' and one to which they would respond `nay', it is reasonable to assume that each voter votes `aye' with
the same probability they vote `nay'.

This leads us to the following definition:

\begin{defn}
A \emph{voting measure} is a probability measure $\IP $ on the space
of voting configurations $\left\{ -1,1\right\} ^{N}= \prod_{\lambda=1}^{M}\, \{ -1,1 \}^{N_{\lambda}}$ with the symmetry
property
\begin{align}
\mathbb{P}\left(X_{11}=x_{11},\ldots,X_{MN_{M}}=x_{MN_{M}}\right)~
=~\mathbb{P}\left(X_{11}=-x_{11},\ldots,X_{MN_{M}}=-x_{MN_{M}}\right)\label{eq:symmetry}
\end{align}
for all voting configurations $\left(x_{11},\ldots,x_{MN_{M}}\right)\in\left\{ -1,1\right\} ^{N}$.

By $\mathbb{E}$ we will denote the expectation with respect to $\mathbb{P}$.
\end{defn}

The simplest and widely used voting measure is the $N$-fold product of the measures
\begin{align*}
   P_{0}(1)~=~P_{0}(-1)~=\frac{1}{2}
\end{align*}
which models independence between all the individual votes $X_{\lambda i}$. In this
special case, known as the \emph{Impartial Culture} (see e.g. \cite{GK1968}, \cite{GehrleinL}, or \cite{KurzMN}), we have
\begin{align*}
   \mathbb{P}\left(X_{11}=x_{11},\ldots,X_{MN_{M}}=x_{MN_{M}}\right)~
   =~\prod_{\lambda=1}^{M}\prod_{i=1}^{N_{\lambda}}\,P_{0}(X_{\lambda i}=x_{\lambda i})~=~\frac{1}{2^{N}}.
\end{align*}

This article treats the class of voting measures called the \emph{collective bias model} (or common belief model, CBM) which extends
the Impartial Culture considerably by allowing correlations both between voters in the same group
as well as correlations across group borders. We introduce and discuss the CBM in Section \ref{sec:CBM}.

Once a voting measure is given, the quantities $X_{\lambda i}$, $S_{\lambda}$, $\chi_{\lambda}$, and the raw democracy deficit are random variables defined on the same probability space $\{-1,1\}^{N}$.

Now we can define the concept of democracy deficit which is a measure of how well the council votes follow the public opinion:

\begin{defn} \label{def:democracy-deficit}The \emph{democracy deficit} given a voting
measure $\mathbb{P}$ and a set of weights $w_{1},\ldots,w_{M}$ is
defined by
\begin{align*}
\Delta_{1}~=\Delta_{1}(w_{1},\ldots,w_{M})~:=~\mathbb{E}\left[\left(S
-\sum_{\lambda=1}^{M}w_{\lambda}\chi_{\lambda}\right)^{2}\right]\,.
\end{align*}

We call $(w_{1},\ldots, w_{M}) $ \emph{optimal weights} if they minimise the democracy deficit, i.e.
\begin{align*}
   \Delta_{1}(w_{1},\ldots,w_{M})~=~\min_{(v_{1},\ldots,v_{M})\in\mathbb{R}^{M}}\;\Delta_{1}(v_{1},\ldots,v_{M}).
\end{align*}
\end{defn}

Note that the democracy deficit depends on the voting measure. It is also worth pointing out that the democracy deficit is a differentiable function of the council weights. This facilitates the analysis required to find the optimal weights. 

Instead of minimising the democracy deficit, we could ask the question of how to minimise the probability that the binary council decision differs from the decision made by a referendum. This would be a less strict criterion in the sense that for a favourable public opinion of 51\%, a 51\% percent vote in the council and a 100\% vote would both be considered equally good. However, one could argue that a 100\% vote in the council would not be a good representation of public opinion. In fact, the 49\% minority might feel they are not represented in the council at all, giving rise to populist anti-elite sentiment among the minority. We argue that adjusting the voting outcomes in the council in such a way that they follow the popular opinion as closely as possible is a worthwhile goal.

If we multiply each weight by the same positive constant and keep the relative quota $q$ fixed, we obtain an equivalent voting system. If the weights $w_{\lambda}$ minimise the democracy deficit $\Delta_{1} $, then
the (equivalent)
weights $ \frac{ w_{\lambda}}{\sigma}$ minimise the `renormalised' democracy deficit
$\Delta_{\sigma} $ defined by
\begin{align*}
   \Delta_{\sigma}~=~\Delta_{\sigma}(v_{1},\ldots,v_{M})~:=~\mathbb{E}\left[\left(\frac{S}{\sigma}
-\sum_{\lambda=1}^{M}v_{\lambda}\chi_{\lambda}\right)^{2}\right]\,.
\end{align*}

It is, therefore, irrelevant whether we minimise $\Delta_{1}$ or $\Delta_{\sigma} $ as long as $\sigma>0 $.
In this article, we will compute optimal weights as $N$ tends to infinity. As a rule, in this limit the minimising weights
for $\Delta_{1}$ will also tend to infinity, it is therefore useful to minimise $\Delta_{\sigma}$ with an $N$-dependent $\sigma $ to keep the weights bounded. A particularly convenient choice is to
normalize the weights $w_{\lambda}$ in such a way that $\sum_\lambda w_{\lambda}=1$.

The rest of this paper is organised as follows: as a first step, in Section \ref{warm-up}, we recall the CBMs with independent groups studied in the past and give an example of a CBM with correlated voting across group boundaries. Then, we formally define the CBM and give several more examples in Section \ref{sec:CBM}. In Section \ref{sec:demdef}, we discuss the problem of determining the optimal weights in order to minimise the democracy deficit. Section \ref{sec:asymp} contains the results concerning the large population behaviour of  CBMs. In Sections \ref{sec:optw} and \ref{sec:optwtc}, we calculate the optimal weights in the large population limit. Then, Section \ref{sec:specmod} discusses the optimal weights for some specific models introduced earlier, such as additive and multiplicative models. Sections \ref{sec:optw} to \ref{sec:specmod} contain the main results of this article concerning the optimal weights for a large set of CBMs. The second part of our analysis of optimal weights concerns their non-negativity. Under independence of the groups, the optimal weights are always strictly positive. Thus, this is a new aspect owed entirely to the relaxation of the independence assumption and not previously analysed in the literature. Section \ref{sec:nonneg} deals with the problem of negative optimal weights and conditions that rule them out. Section \ref{sec:extensions} presents an extension of the CBM with non-identical group bias distributions. Finally, Section \ref{sec:conclusion} concludes the paper.

\section{A Warm-Up}\label{warm-up}

Before defining the CBM in its full generality, we first recall the
CBMs treated in the past, where the biases in different groups are
independent of each other, and hence the voters belonging to different
groups act independently. Then, we give an example of a CBM with correlated
groups. It is our hope that the informal description of these special
cases before defining and analysing the CBM in its full generality
will make the model and the article more accessible to a wider range
of readers.

In \cite{HOEC}, one of us introduced the CBM with
groups still being independent. To distinguish it from the generalisation we are going to introduce
below, we refer to the CBM with independent groups as the \emph{simple CBM} for the rest of this paper.
Let $T_{1},\ldots,T_{M}$ be a collection of independent and identically
distributed random variables with support in $\left[-1,1\right]$.
We will refer to a realisation of each of these variables as $t_{\lambda}$.
Conditionally on $t_{\lambda}$, the voters in group $\lambda$ vote
independently of each other and the probability of a `yes' vote
is $\frac{1+t_{\lambda}}{2}$. Thus, a positive bias makes a $+1$
vote more likely from each of the voters. Since within each group
all voters are subject to the same bias, there is, in fact, positive
correlation between votes. There is, however, no correlation between
votes in different groups. The group bias reflects some real-world
influence on the voters' decisions, such as cultural norms, institutions
such as organised religions, or more recently social media and influencers, specific to each group.
Suppose each of these entities has some stance on each issue that
can be put to vote and these opinions aggregate to some public bias.
This bias can be quantified: a value of $-1$ or close to it reflects
a strong rejection; a value around 0 means neutrality or indifference;
a positive value close to $+1$ reflects strong support of an issue.
The bias affects the voting outcomes in such a way that, provided
the population is large enough, a strong negative bias will result
in a large negative vote $S_{\lambda}$. Similarly, a strong positive
bias induces a large positive vote, and the absence of a substantial
bias causes a small absolute voting margin $\left|S_{\lambda}\right|$,
i.e.\! a voting outcome close to a tie, because the individual votes are nearly
independent.

In order to obtain the distribution of votes in the overall population,
we have to average out the votes given all possible values of the
group bias variables. Due to the independence of the group bias variables
$T_{1},\ldots,T_{M}$, we can factor the probabilities and obtain
for each voting configuration $\left(x_{11},\ldots,x_{MN_{M}}\right)\in\left\{ -1,1\right\} ^{N}$
\[
\IP\left(X_{11}=x_{11},\ldots,X_{MN_{M}}=x_{MN_{M}}\right)=\prod_{\lambda=1}^{M}\IP\left(X_{\lambda1}=x_{\lambda1},\ldots,X_{\lambda N_{\lambda}}=x_{\lambda N_{\lambda}}\right)
\]
because the groups are independent. In accordance with the verbal
description given in the last paragraph, the probabilities for each
group's voting configuration can be expressed as follows:
\[
\IP\left(X_{\lambda1}=x_{\lambda1},\ldots,X_{\lambda N_{\lambda}}=x_{\lambda N_{\lambda}}\right)=E\left(\prod_{i=1}^{N_{\lambda}}P_{T_{\lambda}}\left(X_{\lambda i}=x_{\lambda i}\right)\right),
\]
where $P_{T_{\lambda}}$ is the probability measure on $\left\{ -1,1\right\} $
with $P_{T_{\lambda}}\left(1\right)=\frac{1+T_{\lambda}}{2}$ and
the expectation $E$ is taken with respect to the distribution of the group
bias variable $T_{\lambda}$. To recapitulate, we obtain the probability
of a voting configuration by taking the (random) probabilities $P_{T_{\lambda}}\left(X_{\lambda i}=x_{\lambda i}\right)$
for each individual vote and multiplying them together for all voters
belonging to group $\lambda$. This multiplicative form of the probabilities
is due to the conditional independence of the individual votes in
the group for a given realisation of the group bias variable $T_{\lambda}=t_{\lambda}$.
Next, we take the expectation of this product of probabilities over
all possible realisations of $T_{\lambda}$. This gives us the probability
of the voting configuration $\left(x_{\lambda1},\ldots,x_{\lambda N_{\lambda}}\right)$
in group $\lambda$. Finally, multiplying the probabilities of each
group's voting configuration yields the probability of the overall
voting configuration $\left(x_{11},\ldots,x_{MN_{M}}\right)$ under
the simple CBM.

For the simple CBM, we can calculate the optimal weights in the council
(see \cite{HOEC} for this result). These turn out to be proportional
to the expected absolute value of the group voting margins, $w_{\lambda}\propto\IE\left(\left|S_{\lambda}\right|\right)$.
It is known that as each group's population diverges to infinity,
we have $\IE\left(\left|S_{\lambda}\right|\right)/N_{\lambda}\rightarrow E\left|T_{\lambda}\right|$.
The latter expectation is a characteristic of the underlying distribution
of the bias variables $T_{\lambda}$. This implies (under the mild
assumption $E\left|T_{\lambda}\right|>0$) that in the large population
limit the optimal weights in the council are proportional to each
group's population, i.e.\! $w_{\lambda}=CN_{\lambda}$, with the
same positive multiplicative constant $C$ for each group.

Thus, the simple CBM yields the recommendation of assigning each group
a number of votes in the council which is proportional to its population.
This stands in contrast to Penrose's square root law, which prescribes
weights proportional to the square root of the population instead
of the population itself. Evidently, proportionality favours larger
groups at the expense of smaller ones and vice-versa for the square
root law.

Next, we consider a model with bias variables $T_{1},\ldots,T_{M}$
which are correlated, thus inducing correlated voting across groups
boundaries. Let $Z$ be a uniformly distributed random variable on
the interval $\left[-1/2,1/2\right]$. We will call $Z$ the global
bias variable. Let $Y_{1},\ldots,Y_{M}$ be i.i.d.\! copies of $Z$.
We define the group bias variables $T_{1},\ldots,T_{M}$ by setting
$T_{\lambda}:=Y_{\lambda}+Z$ for each group $\lambda$. This is a
special case of what we will call an `additive model' in later sections.

In an additive model, the global bias is modified by a group-specific bias which may reinforce or counteract the global bias. By assuming all these variables are identically distributed, we assign equal influence to the global bias and group-specific attitudes. Of course, it is also possible to assume a stronger global or local bias, a topic we will explore in Section \ref{sec:additive}.

Even though $Z,Y_{1},\ldots,Y_{M}$ are independent, due to the addition
of the same variable $Z$ in the definition of each $T_{\lambda}$,
the group bias variables $T_{\lambda}$ are \emph{not} independent.
Given a realisation $t_{\lambda}=y_{\lambda}+z$, the individual votes
in group $\lambda$ each turn out positive with probability $\frac{1+t_{\lambda}}{2}$.
Contrary to the simple CBM, we cannot factor the probabilities of
the overall voting configuration into the probabilities of the group
voting configurations. Instead, the probabilities can be expressed
as
\begin{align*}
\IP\left(X_{11}=x_{11},\ldots,X_{MN_{M}}=x_{MN_{M}}\right) & =E\left(\prod_{\lambda=1}^{M}\prod_{i=1}^{N_{\lambda}}P_{T_{\lambda}}\left(X_{\lambda i}=x_{\lambda i}\right)\right),
\end{align*}
where the expectation $E$ is taken with respect to the distributions of
$Z,Y_{1},\ldots,Y_{M}$. As the terms $P_{T_{\lambda}}\left(X_{\lambda i}=x_{\lambda i}\right)$
each depend on two different random variables, $Y_{\lambda}$ and
$Z$, there is no way to factor the expectation above.

In order to minimise the democracy deficit, we have to solve the linear
equation system given by \eqref{eq:LES}. We omit the calculations and refer the
reader to Section \ref{sec:additive}, where we will analyse a more general additive
CBM with uniformly distributed bias variables. By Theorem \ref{opt_weights_uniform}, the
optimal weight for each group $\lambda$ is asymptotically given by
\[
w_{\lambda}=D+C\frac{N_{\lambda}}{N},
\]
where we have simplified and normalised the weights. The positive
constants $C$ and $D$ are common for all groups. Contrary to the
simple CBM with independent groups, in this correlated example, we
have a summand which is proportional to the size of the group, but
we also have a constant summand $D$ which is the same for each group
and hence independent of the group's size. This is qualitatively the
same formula as the one employed for the composition of the U.S.\!
Electoral College, where $D$ equals 2, the number of senators for each state,
and $C=435$ is the number of representatives, of which each state
receives a number roughly proportional to its population. The new
feature of the general CBM compared to the simple CBM concerning the
problem of optimal council weights is the presence of the constant
term $D$. This functional form for the optimal weights applies not
just to the special case discussed in this section but in general
to CBMs with correlated groups. See Theorem \ref{thm:asweights} for the general result.
As far as the authors of this article know, this is the first theoretical
justification of the formula that determines the number of electors
for each state in the U.S.\! Electoral College. We want to emphasise
that it is the correlation between votes in different groups that
induces the constant term $D$ in the formula for the optimal weights.
No matter what distribution we choose for the group bias variables
$T_{\lambda}$ in a simple CBM \textendash{} even allowing for $T_{\lambda}$
that follow different distributions \textendash{} the constant $D$
always equals 0. The formula $w_{\lambda}=D+C\frac{N_{\lambda}}{N}$
is more favourable to small groups than proportionality. However,
the square root law is even better for small groups in most cases,
the exception being when the difference in size between small and
large groups is minuscule.

\section{The Collective Bias Model}\label{sec:CBM}

We recall from the last section that in the simple CBM, the votes $X_{\lambda i}$ within a group $\lambda $ are correlated via a random
variable $T_{\lambda}$ with values in $[-1,1] $, the local `collective bias'. The random variables $T_{\lambda}$ model the influence
of a cultural tradition in the respective group or the leverage of a strong political party (or religious group, etc.) within the group $\lambda $. It is this central influence, which affects all voters within a given group equally, which induces positive correlation between votes within each group. Aside from this central influence, the voters make up their own minds. This is in contrast to models with interactions between voters such as those inspired by spin models from statistical mechanics, e.g.\! mean-field models (see \cite{KT_opt_weights_CW} for a discussion of a mean-field model and the determination of the optimal weights). In the simple CBM, there is no correlation between votes in different groups, only correlation within groups.

In what follows, we will define the simple CBM in the same measure-theoretic language we will also employ for the general CBM. Given the bias $T_{\lambda}=t_{\lambda}$, the per capita voting margin $S_\lambda/N_\lambda$ inside group $\lambda$ fluctuates around $t_{\lambda}$.
More precisely, suppose the bias variable $T_{\lambda} $ is distributed according to the probability
measure $\rho $ on $[-1,1] $. Then the simple CBM for the group $\lambda $ is given by
\begin{align}\label{eq:simpleCBM}
   \mathbb{P}\Big(X_{\lambda 1}=x_{1}, X_{\lambda 2}=x_{2},\ldots,X_{\lambda N_{\lambda}}=x_{N_{\lambda}}\Big)~=~
   \int P_{t}\big(x_1,\ldots, x_{N_{\lambda}}\big)\rho(\textup{d}t),
\end{align}
where
\begin{align*}
& P_{t}(x)~=~\left\{
             \begin{array}{ll}
               \eh(1+t), & \hbox{for $x=1$,} \\[2mm]
               \eh(1-t), & \hbox{for $x=-1$,}
             \end{array}
           \right.\\
\text{and}\quad & P_{t}\big(x_1,\ldots, x_{N_{\lambda}}\big)~
=~P_{t}(x_{1})\,P_{t}(x_{2})\,\cdots\,P_{t}(x_{N_{\lambda}}).
\end{align*}

By $E_{t} $ we denote the expectation with respect to $P_{t} $. The definition of $P_{t}$ implies that $E_{t}(X)=t$.
We call $\rho $ the local bias measure of group $\lambda$.

We remark that, due to de Finetti's Theorem\footnote{De Finetti's Theorem states that an infinite sequence of exchangeable random variables can be represented as a mixture of i.i.d.\! random variables. The mixture is specified by a probability measure referred to as a de Finetti measure. De Finetti's Theorem has been considerably generalised. See e.g.\! \cite{DF1987}.}, the simple CBM is the most general voting measure that is `anonymous' in the sense that reordering the voters leaves the measure unchanged (see \cite{Klenke} or \cite{KiFinetti}).

The `Impartial Anonymous Culture', which underlies the Shapley-Shubik power index \cite{Straffin} (see also \cite{GehrleinL} or \cite{KurzMN}), is a particular case of \eqref{eq:simpleCBM} where $\rho $ is the
uniform distribution on $[-1,1] $. The Impartial Culture is another special case for which $\rho=\delta_{0}$, the Dirac measure\footnote{The Dirac measure (or point mass) at $x\in \IR^M$, $\delta_x$, is a probability measure which assigns any set $A\subset \IR^M$ the probability 1 if $x\in A$ and 0 otherwise.} at $t=0$.

In the simple CBM, the voting results in different groups are  independent, so the corresponding voting measure on $\prod_{\lambda=1}^{M} \{ -1,1 \}^{N_{\lambda}} $ is given by the product of the probabilities \eqref{eq:simpleCBM}.
\begin{align*}
   \mathbb{P}\left(\uX_{1}=\ux_{1},\ldots,\uX_{M}=\ux_{M}\right)~
=~\int P_{t_1}\big(\ux_{1}\big)\;\rho(\textup{d}t_{1})~\cdots
~\int P_{t_M}\big(\ux_{M}\big)\;\rho(\textup{d}t_{M}),
\end{align*}
where $\underline{X}_{\lambda}=(X_{\lambda 1},\ldots,X_{\lambda N_{\lambda}})$ and similarly for $\underline{x}_{\lambda}$.

In this paper, we study the \emph{generalised collective bias model} (CBMs with dependence across group boundaries were first analysed in \cite{Toth}). In this model, there
is an additional \emph{global} bias
variable $Z$ with values in $[-1,1] $ and with distribution $\mu $. The global bias $Z$ influences each of the groups in a similar way. This is implemented in the model by allowing the  local bias measure $\rho $ to depend on the
value $Z=z$. More precisely, the (generalised) collective bias model is given by:

\begin{defn}\label{def:CBM}
   Suppose $\mu $ is a probability measure on $[-1,1] $ and for every $z\in[-1,1]$ there is a
probability measure $\rho^{z}$ on $[-1,1] $. Then we define the probability measure $\IP_{\mu \rho}$
on $\{ -1,1 \}^{N}=\prod_{\lambda=1}^{M}\{ -1,1 \}^{N_{\lambda}}$ by
\begin{align}
   \IP\big(\ux_{1},\ux_{2},\ldots,\ux_{M}\big)~
=~&\int  \left( \int P_{t_{1}}(\ux_{1})\,\rho^{z}(\textup{d}t_{1})
\cdots\int P_{t_{M}}(\ux_{M})
\,\rho^{z}(\textup{d}t_{M}) \right)\;\mu(\textup{d}z),\label{eq:genCBM}
\end{align}
where $\ux_{\lambda}\in\{ -1,1 \}^{N_{\lambda}}$.

We call $\IP_{\mu\rho} $ the \emph{collective bias measure} with \emph{global bias measure} $\mu$ and \emph{local bias measure} $\rho = \rho^{z} $ or the CBM($\mu,\rho $) for short. If $\mu $ and $\rho $ are clear from the context, we simply write $\IP$ instead of $\IP_{\mu\rho}$.
\end{defn}

\begin{rem}\label{rem:kernel}
Technically speaking, $\rho^{z} $ is a stochastic kernel (see e.g. \cite{Klenke}), i.e.:
\begin{enumerate}
\item For every $z\in[-1,1]$, the quantity $\rho^{z}$ is a probability measure on $[-1,1] $.
\item For every Borel set $A\subset[-1,1]$, the function $z\mapsto \rho^{z}A$ is measurable.
\end{enumerate}
\end{rem}
We could allow the kernels $\rho^{z}$ to depend on
the group $\lambda $, and in Section \ref{sec:extensions} we will come back to this generalisation,
but for the moment we take the same local bias measure for all groups.

To ensure that $\IP_{\mu\rho}$ is a \emph{voting measure}, i.e.$\!\,$ to satisfy \eqref{eq:symmetry},
we assume the following sufficient condition in what follows:
\begin{ass}\label{ass:sym}
\begin{enumerate}
\item $\mu$ is symmetric, i.e. $\mu A=\mu(-A)$,
\item for all $z\in\left[-1,1\right]$, the distributions
$\rho^{z}$ satisfy $\rho^{z}A=\rho^{-z}\left(-A\right)$ for all measurable sets $A\subset\left[-1,1\right]$.
\end{enumerate}
\end{ass}

The general framework of a CBM is given by a set
of bias random variables that represent some cultural or political
influence that acts on all voters. There is a \emph{global bias variable}
$Z$ with distribution $\mu$ which induces correlation between voters
of different groups. Furthermore, there is a \emph{local bias variable} $T_{\lambda}$
for each group. Its conditional distribution given the realisation
$Z=z$ is $\rho^{z}$. The group bias variable $T_{\lambda}$
induces correlation between the voters belonging to that group. The result is correlated voting across group boundaries, as a rule with stronger
correlation within each group to account for shared culture and preferences.

Conditionally on the realisations of $Z=z$ according to $\mu$
and $T_{\lambda}=t_{\lambda}$ according to $\rho^{z}$, all
voters in group $\lambda$ cast their vote independently, with a probability
of voting `aye' equal to $\frac{1+t_{\lambda}}{2}$. Hence,
a value $t_{\lambda}=1$ implies that all voters belonging to
group $\lambda$ vote `aye' almost surely. Similarly, $t_{\lambda}=-1$
implies all vote `nay' almost surely. $t_{\lambda}=0$ means
there is no bias, and all voters in the group vote independently with probability $\tfrac{1}{2}$ for `aye' (and the same probability for `nay').

\begin{example}\label{ex:1}
{ }\quad We discuss various examples (or classes of examples) of CBMs.
\begin{enumerate}
   \item If the measures $\rho^{z} $ are independent of $z$, then the (generalised)
    CBM reduces to the simple CBM. The Impartial Anonymous Culture is a particular case of this class of examples.
    \item If $\rho^{z}=\delta_{0} $, then all random variables
    $X_{\lambda i}$ are independent reflecting Impartial Culture.
\item If $\rho^{z}=\delta_{z}$, then we have a simple CBM for the \emph{union}, i.e.\! for \emph{all} $X_{\lambda i}$.
\item \label{ex:add} In the class of \emph{additive models}, the `total bias' prevailing within each group $T_\lambda$ is the \emph{sum} of the global bias variable $Z$ and a local or group bias modifier variable $Y_{\lambda}$, i.e. $T_\lambda=Z+Y_\lambda$.
Assume the bias modifiers $Y_\lambda$ are independent and identically distributed according to a fixed symmetric probability measure $\rho$.
Then, for each realisation $Z=z$, the local measure $\rho^{z}$ is given by
\begin{align*}
   \rho^{z}[a,b]~=~\rho[a-z,b-z].
\end{align*}
So for this model class we have
\begin{align}
\IP\big(\ux_{1},\ux_{2},\ldots,\ux_{M}\big)~=~\int  \left( \int P_{z+y_{1}}(\ux_{1})\,\rho(\textup{d}y_{1})
\cdots\int P_{z+y_{M}}(\ux_{M})
\,\rho(\textup{d}y_{M}) \right)\;\mu(\textup{d}z). \label{eq:addCBM}
\end{align}
To ensure that $t_{\lambda}=z+y_\lambda\in[-1,1]$ we assume that $\supp\,\mu\,+\,\supp\,\rho\;\subset[-1,1]$, where $\supp\, \mu$ stands for the support of the measure $\mu$. This kind of additive CBM was first introduced and analysed in Section 4.2 of \cite{Toth}. Additive models are discussed in more detail in Section \ref{sec:additive}.
\item For a particular example of the additive model which we are going to discuss in some detail,
 we choose $\mu $ and $\rho $ as the uniform distribution on $[-g,g] $ and on $[-\ell,\ell] $, respectively, with $0<g,\ell$ and $g+\ell\leq 1$.

In this case, the additive CBM-measure is given by
\begin{align*}
   \frac{1}{2g}\int_{-g}^{+g}\;\left(\,\frac{1}{2\ell}\int_{z-\ell}^{z+\ell} P_{t_{1}}(\underline{x}_{1})\,\textup{d}t_{1}\cdots
\frac{1}{2\ell}\int_{z-\ell}^{z+\ell} P_{t_{M}}(\underline{x}_{M})\,\textup{d}t_{M}\,\right)\,\textup{d}z.
\end{align*}
This example may be considered a `hierarchical' version of Impartial Anonymous Culture.
\item\label{ex:mult} In the class of \emph{multiplicative models}, the total bias is the \emph{product} of the global bias variable $Z$ and the group bias modifier variable $Y_{\lambda}$, i.e. $T_\lambda=ZY_\lambda$. We assume the $Y_\lambda$ are independent and identically distributed according to a fixed probability measure $\rho$. Then the local measure is $\rho^0=\delta_0$ if $Z=0$, and for $Z=z\neq0$,
\begin{align*}
   \rho^{z}[a,b]~=~\rho\left[\frac{a}{z}\wedge\frac{b}{z},\frac{a}{z}\vee\frac{b}{z}\right].
\end{align*}
Above, we used the notation $a\vee b:=\max\{a,b\}$ and $a\wedge b:=\min\{a,b\}$ for all real numbers $a$ and $b$.

This kind of multiplicative CBM was first introduced and analysed in Section 4.1 of \cite{Toth}. We discuss the multiplicative model in Section \ref{sec:mult}.
\item In the CBM($\mu,\rho$), the measure $\rho $ must have support in $[-1,1] $. Above, we assumed
without loss of generality the same for the measure $\mu $. In the following example, it is more convenient to have more freedom in the choice of $\mu $.

Suppose that $\rho^{z} $ is the beta distribution $B(z,z,-1,1) $, i.e. $\rho^z$ has the density
\begin{align*}
   f_{z}(x):=\frac{\Gamma(2z)}{\Gamma(z)^{2}\; 2^{2z-1}}\;(1+x)^{z-1}\,(1-x)^{z-1}
\end{align*}
for $x\in[-1,1]$, where $\Gamma $ is the Gamma function. For $\mu$, we can take any probability distribution on $(0,\infty) $. Note that the symmetry condition \eqref{eq:symmetry} is satisfied.

For large $z$, the measures $\rho^{z} $ are more and more concentrated around $0$. For $z=1$, the measure
$\rho^{z} $ is the uniform distribution, and for small $z>0,$\;$\rho^{z} $ is more and more concentrated near the extreme positions $+1$ and $-1$. The measures $\rho^{z} $ are intimately connected with P\'olya urn models which are discussed, for example, in \cite{Berg} and \cite{KurzMN}.

In a sense, the parameter $z$ reflects the `polarisation' inside the society.

\item\label{ex:det} We end the presentation of examples with a rather pathological class, in fact one we are going to exclude below.
Suppose that for $\mu$-almost all $z$ either $\rho^{z}=\delta_{1}$ or $\rho^{z}=\delta_{-1}$. Then the popular vote is always unanimous. So, in a sense, there is little randomness in this example.
\end{enumerate}
\end{example}

\section{Democracy Deficit and Optimal Weights}\label{sec:demdef}

We want to choose the weights so that the democracy deficit is minimal.
By taking partial derivatives of $\Delta_{\sigma}$ with respect to each $w_\lambda$, we obtain a system of linear equations
that characterizes the optimal weights. Indeed, for $\lambda=1,\ldots, M$,
\begin{equation}
\sum_{\nu=1}^{M}\;\mathbb{E}\left(\chi_{\lambda}\chi_{\nu}\right)w_{\nu}~
=~\frac{1}{\sigma}\mathbb{E}\left(\chi_{\lambda}S\right)\,.\label{eq:LES}
\end{equation}
Defining the matrix $A$, the weight vector $w$ and the vector $b$ on the right hand side
of  \eqref{eq:LES} by
\begin{align}
A~&:=~\left(A_{\lambda\nu}\right)_{\lambda,\nu=1,\ldots,M}~
:=~\mathbb{E}\left(\chi_{\lambda}\chi_{\nu}\right)\label{eq:A}\\
w~&:=~\left(w_{\lambda}\right){}_{\lambda=1,\ldots,M}\notag\\
b~&:=~\left(b_{\lambda}\right)_{\lambda=1,\ldots,M}~
:=~\frac{1}{\sigma}\,\mathbb{E}\left(\chi_{\lambda}S\right)\,.\label{eq:b}
\end{align}
we may write \eqref{eq:LES} in matrix form as
\begin{align}\label{eq:LESM}
   A\;w~=~b\,.
\end{align}
Since the quantity $b$ depends on $\sigma $ (by a factor of $\frac{1}{\sigma} $),
the optimal weights do as well.

\medskip
A solution $w$ of \eqref{eq:LESM} is indeed a minimum if the matrix $A$, the Hessian of $\Delta $,  is (strictly) positive definite.

In this case, the matrix $A$ is invertible and consequently there is a unique tuple of optimal weights, namely the unique solution of \eqref{eq:LESM}.

If the groups vote independently of each other, the matrix $A$ is diagonal. This happens for
CBM($\mu,\rho $)-measures for which $\rho $ is independent of $z$. These cases are treated in
\cite{HOEC}.

It turns out that in the general case the matrix $A$ is indeed invertible under rather mild conditions.

\begin{defn}\label{def:suffrand}
   We say that a voting measure $\IP $ on $\prod_{\lambda=1}^{M}\,\{ -1,1 \}^{N_{\lambda}}$ is \emph{sufficiently random} if
\begin{align}\label{eq:suffrand}
   \IP\, (\chi_{1}=c_{1},\ldots,\chi_{M}=c_{M})~>~0\qquad \text{for all } c_{1},\ldots,c_{M}\in\{ -1,1 \}.
\end{align}
\end{defn}

Note that \eqref{eq:suffrand} is not very restrictive. For example, if the support $\rm{supp}\,\IP$ of the measure $\IP$ is the whole space $\{ -1,1 \}^{N}$, then $\IP$ satisfies \eqref{eq:suffrand}.
Moreover, for CBMs, we have:

\begin{prop}\label{prop:suffrand}
   Suppose that $\IP $ is a CBM($\mu,\rho$)-measure. Then $\IP$ is sufficiently random if and only if
\begin{align}\label{eq:noDelta}
   \mu\,\Big\{z\;\Big\vert \; \rho^{z}\{ -1 \}=1 \quad\text{\rm or}\quad \rho^{z}\{ 1 \}=1\Big\}~<~1.
\end{align}
\end{prop}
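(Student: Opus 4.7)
\textbf{Proof plan for Proposition \ref{prop:suffrand}.} My plan is to condition on the global bias $Z$ and reduce the question to an assertion about each group separately. Observe first that once $Z=z$ is fixed, the integrand in \eqref{eq:genCBM} factorises across the groups, so under the conditional measure $\IP(\,\cdot\mid Z=z)$ the groups (and hence the $\chi_\lambda$) are independent. Writing $Q_z(\chi_\lambda=c):=\int P_t(\chi_\lambda=c)\,\rho^z(\textup dt)$, the law of total probability gives
\[
   \IP(\chi_1=c_1,\ldots,\chi_M=c_M)~=~\int\,\prod_{\lambda=1}^{M}\,Q_{z}(\chi_\lambda=c_\lambda)\,\mu(\textup dz).
\]

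The next step is the key auxiliary claim: for every $z$,
\[
   Q_z(\chi_\lambda=+1)>0\iff \rho^z\{-1\}<1,\qquad Q_z(\chi_\lambda=-1)>0\iff \rho^z\{+1\}<1.
\]
This is an elementary computation about the conditional Bernoulli structure of $P_t$. For $t\in[-1,1)$ we have $P_t(-1)=\tfrac12(1-t)>0$, so $P_t(X_{\lambda 1}=\cdots=X_{\lambda N_\lambda}=-1)>0$, hence $P_t(\chi_\lambda=-1)>0$; while for $t=1$ all votes are deterministically $+1$, so $P_t(\chi_\lambda=-1)=0$. Hence $t\mapsto P_t(\chi_\lambda=-1)$ is strictly positive on $[-1,1)$ and zero at $t=1$, which yields the equivalence for $Q_z(\chi_\lambda=-1)$; the other is symmetric.

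Putting this together, let $B:=\{z:\rho^z\{-1\}=1\text{ or }\rho^z\{+1\}=1\}$. If $\mu(B)<1$, then on the set $B^{c}$ of positive $\mu$-measure every factor $Q_z(\chi_\lambda=c_\lambda)$ is strictly positive, so the integral representation above gives $\IP(\chi_1=c_1,\ldots,\chi_M=c_M)>0$ for every choice of signs, i.e.\ $\IP$ is sufficiently random. Conversely, if $\mu(B)=1$, then for $\mu$-almost every $z$ either $\rho^z=\delta_{+1}$ (forcing $t_\lambda=1$ and hence $\chi_\lambda=+1$ for every $\lambda$) or $\rho^z=\delta_{-1}$ (forcing $\chi_\lambda=-1$ for every $\lambda$); in either case the conditional distribution assigns zero probability to any non-constant sign pattern. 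Integrating in $z$ shows $\IP(\chi_1=c_1,\ldots,\chi_M=c_M)=0$ whenever the $c_\lambda$ are not all equal, so \eqref{eq:suffrand} fails (assuming $M\geq 2$, which is the situation of interest).

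The only delicate point is the boundary behaviour at $t=\pm1$, which is what makes the condition involve the point masses $\rho^z\{\pm1\}$ rather than, say, a weaker non-degeneracy of $\rho^z$; I would make sure to state cleanly that $P_t(\chi_\lambda=\pm1)$ is strictly positive on the open interval $(-1,1)$ and to justify the restriction to $M\geq 2$ in the converse direction. Everything else is bookkeeping with the stochastic kernel $\rho^z$ (Remark \ref{rem:kernel}) and Fubini, both of which are routine.
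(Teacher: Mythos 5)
Your argument is correct. The paper states Proposition \ref{prop:suffrand} without giving a proof, and your route -- conditioning on $Z=z$ so that \eqref{eq:genCBM} factorises into the per-group quantities $Q_z(\chi_\lambda=c)$, then observing that $P_t(\chi_\lambda=-1)>0$ exactly for $t\in[-1,1)$ and $P_t(\chi_\lambda=+1)>0$ exactly for $t\in(-1,1]$ -- is the natural argument and correctly supplies the missing proof, including the reason the condition involves the point masses $\rho^z\{\pm1\}$ rather than any weaker non-degeneracy. Your caveat that the converse needs $M\geq 2$ is a genuine (if minor) observation: for $M=1$ a symmetric mixture of $\delta_{+1}$ and $\delta_{-1}$ kernels would satisfy \eqref{eq:suffrand} while violating \eqref{eq:noDelta}, but $M\geq 2$ is the standing assumption of the two-tier setting.
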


\begin{rem}
   If $\mu\{z\mid\rho^{z}\{ -1 \}=1 \;\text{\rm or}\; \rho^{z}\{ 1 \}=1\}=1 $, then the voting result in each
group is unanimous, so weights proportional to $N_{\lambda}$ are optimal weights (not necessarily
unique).
\end{rem}

\begin{prop}\label{prop:posdef} Let $\IP $ be a voting measure and let $A $ be defined by \eqref{eq:A}.
\begin{enumerate}
\item The  matrix $A$ is positive semi-definite.
\item $A$ is positive definite if $\IP$ is sufficiently random.
\end{enumerate}
\end{prop}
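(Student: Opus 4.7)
The plan is to recognise $A$ as the covariance-type (Gram) matrix of the random variables $\chi_1,\ldots,\chi_M$ in $L^2(\IP)$ and exploit this directly. For any vector $v=(v_1,\ldots,v_M)\in\IR^M$, I would compute the quadratic form
\begin{align*}
   v^{\top}A\,v~=~\sum_{\lambda,\nu=1}^{M}v_{\lambda}v_{\nu}\,\IE(\chi_{\lambda}\chi_{\nu})~=~\IE\!\left[\left(\sum_{\lambda=1}^{M}v_{\lambda}\chi_{\lambda}\right)^{\!2}\right]\geq 0,
\end{align*}
which immediately yields part (1).

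For part (2), I would proceed by contradiction (or equivalently, by using the equality case of the above identity). If $A$ fails to be positive definite, there exists $v\neq 0$ with $v^{\top}Av=0$, and by the identity above this forces the random variable $\sum_{\lambda}v_{\lambda}\chi_{\lambda}$ to vanish $\IP$-almost surely. Because $\IP$ is sufficiently random, every sign pattern $(c_{1},\ldots,c_{M})\in\{-1,1\}^{M}$ satisfies $\IP(\chi_{1}=c_{1},\ldots,\chi_{M}=c_{M})>0$, so the linear equation $\sum_{\lambda}v_{\lambda}c_{\lambda}=0$ must hold on the entire hypercube $\{-1,1\}^{M}$.

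The final step is then elementary: taking the all-ones pattern gives $\sum_{\lambda}v_{\lambda}=0$, and flipping the sign of the $\mu$-th coordinate yields $\sum_{\lambda}v_{\lambda}-2v_{\mu}=0$, hence $v_{\mu}=0$ for each $\mu$. This contradicts $v\neq 0$ and completes the argument. There is no real obstacle here; the only subtlety is the passage from $v^{\top}Av=0$ to the almost-sure identity (which requires the elementary $L^2$ fact that a non-negative random variable with zero expectation vanishes a.s.) and then the use of sufficient randomness to promote this a.s.\! identity to a pointwise identity on the discrete set of possible values of $(\chi_{1},\ldots,\chi_{M})$.
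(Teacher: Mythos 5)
Your proposal is correct and follows essentially the same route as the paper: both identify $(v,Av)=\IE\bigl[(\sum_{\lambda}v_{\lambda}\chi_{\lambda})^{2}\bigr]\geq 0$ for semi-definiteness and then deduce from $(v,Av)=0$ that $\sum_{\lambda}v_{\lambda}\chi_{\lambda}=0$ almost surely, which sufficient randomness rules out for $v\neq 0$. The only difference is that you spell out the final linear-algebra step (evaluating on all sign patterns and flipping coordinates to get $v_{\mu}=0$), which the paper leaves implicit.
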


\begin{proof}
 For vectors $x,y\in\IR^M$, we will write $(x,y)$ for the Euclidean inner product.  For any vector $x=(x_{1},\ldots,x_{M}) $, we have
\begin{align}\label{eq:psd}
   \big(x,A\,x\big)~=~\IE\left(\left(\sum_{\lambda=1}^{M} x_{\lambda}\,\chi_{\lambda}\right)^{2} \right)~\geq~0\,.
\end{align}
So $A $ is positive semi-definite.

Suppose now that $(x,Ax)=0$. Then
\begin{align*}
   \IE\left(\left(\sum_{\lambda=1}^{M} x_{\lambda}\,\chi_{\lambda}\right)^{2} \right)~=~0\,.
\end{align*}
This implies that
\begin{align}\label{eq:eigen}
   \sum_{\lambda=1}^{M} x_{\lambda}\,\chi_{\lambda}~=~0 \qquad{\text{almost surely.}}
\end{align}
For a sufficiently random model, this is only possible if $x=0$.
\end{proof}

\begin{thm}\label{thm:weights}
   If the voting measure $\IP $ is sufficiently random, the optimal weights minimising the democracy deficit $\Delta_{\sigma} $ are unique and given by
\begin{align}\label{eq:w}
   w~=~A^{-1}\,b.
\end{align}
\end{thm}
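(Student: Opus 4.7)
The plan is to view $\Delta_\sigma$ as a quadratic form in $w=(w_1,\ldots,w_M)\in \IR^M$, and then to invoke Proposition \ref{prop:posdef} to turn ``critical point'' into ``unique global minimum''.

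First I would expand
\begin{align*}
\Delta_\sigma(w) ~=~ \IE\left[\left(\frac{S}{\sigma}\right)^{2}\right] - \frac{2}{\sigma}\sum_{\lambda=1}^M w_\lambda\,\IE(\chi_\lambda S) + \sum_{\lambda,\nu=1}^M w_\lambda w_\nu\,\IE(\chi_\lambda\chi_\nu),
\end{align*}
so that in matrix form $\Delta_\sigma(w) = (w,Aw) - 2(b,w) + c$, where $c=\IE[(S/\sigma)^2]$ does not depend on $w$. This representation makes the structure transparent: $\Delta_\sigma$ is a quadratic polynomial in $w$ with Hessian $2A$ and gradient $2Aw - 2b$.

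Next I would handle existence and uniqueness of the minimiser. By Proposition \ref{prop:posdef}, sufficient randomness implies that $A$ is positive definite, hence invertible, and $\Delta_\sigma$ is strictly convex on $\IR^M$. A strictly convex quadratic on $\IR^M$ attains its global minimum at the unique stationary point. Setting the gradient to zero gives exactly the linear system $Aw=b$ (this is the computation already recorded in \eqref{eq:LES}), and invertibility of $A$ yields the explicit formula $w=A^{-1}b$.

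Finally, I would add a short sanity check that this critical point is indeed the minimum and not merely a saddle: for any $v\in\IR^M$ one has
\begin{align*}
\Delta_\sigma(w+v) - \Delta_\sigma(w) ~=~ (v,Av) ~\geq~ 0,
\end{align*}
using $Aw=b$ to cancel the cross terms, and strict positivity of $(v,Av)$ for $v\neq 0$ by Proposition \ref{prop:posdef} establishes uniqueness directly. There is no real obstacle in this argument; the substance lies entirely in the positive definiteness of $A$, which has already been settled, so the proof reduces to assembling these pieces.
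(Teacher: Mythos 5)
Your proposal is correct and follows essentially the same route as the paper: the paper obtains the stationarity condition \eqref{eq:LES} by differentiating $\Delta_{\sigma}$, identifies $A$ as the Hessian, and invokes Proposition \ref{prop:posdef} to conclude that sufficient randomness makes $A$ positive definite, hence the critical point $w=A^{-1}b$ is the unique minimiser. Your explicit expansion of $\Delta_{\sigma}$ as a quadratic form and the final check $\Delta_{\sigma}(w+v)-\Delta_{\sigma}(w)=(v,Av)>0$ for $v\neq 0$ merely make the paper's (largely implicit) argument fully explicit.
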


\begin{defn}\label{def:wb}
  If $w$ satisfies \eqref{eq:w}, we set
\begin{align*}
   \overline{w}_{\nu}~:=~\frac{w_{\nu}}{\sum_{\lambda=1}^{M} w_{\lambda}},
\end{align*}
and call $\overline{w}_{\nu} $ the \emph{normalised optimal weights}.
\end{defn}

While the weights $w$ depend on $\sigma $ through $b=b_{\sigma}$, the normalised weights $\overline{w}$
are independent of $\sigma $. The $\overline{w}_{\nu} $ sum up to $1 $.

For the rest of this paper, we shall always assume that our models are sufficiently random.

Given Theorem \ref{thm:weights}, one is tempted to believe that the problem of optimal weights
is solved. Unfortunately, this is not the case, because it is practically impossible to compute the
ingredients like $\IE(\chi_{\lambda}\chi_{\nu})$ and $\IE(S\chi_{\lambda}) $ for finite (but fairly large) $N$.
A way out is to compute these quantities approximately for $N\to\infty$, and this is what we are doing throughout the rest of this paper.

\section{Asymptotics for the Collective Bias Model}\label{sec:asymp}
For given $\mu $ and $\rho^{z} $ and for $\uN=(N_{1},\ldots,N_{M})$, we denote by
$\IP_{\uN} $ the CBM($\mu,\rho$)-measure on $\prod_{\lambda=1}^{M}\{ -1,1 \}^{N_{\lambda}}$.
In the following, we try to compute optimal weights for large $N=\sum_\lambda N_{\lambda}$. More precisely, we consider
\eqref{eq:LESM} for $N\to\infty$. This limit is always taken in the sense that
\begin{align}\label{eq:Nlambda}
   \lim_{N\to\infty} \frac{N_{\lambda}}{N}~=~\alpha_{\lambda}~>~0
\end{align}
for each $\lambda$, and we assume that each group's population diverges to infinity as $N$ goes to infinity. Observe that $\sum_\lambda \alpha_{\lambda}=1 $. The constants $\alpha_\lambda$ represent the population of each group as a fraction of the overall population (at least asymptotically). Whenever the $N_{\lambda} $ are clear from the context we write $\IP_{N}, \IE_{N}$ instead of $\IP_{\uN}, \IE_{\uN}$, etc. We also set
\begin{align}
\big(A_{N}\big)_{\lambda\nu}~&:=~\IE_{N}\big(\chi_{\lambda}\chi_{\nu}\big),\quad \big(b_{N}\big)_{\lambda}~:=~\IE_{N}\left(\frac{S}{N}\chi_{\lambda}\right), \notag\\\quad\text{and}\quad s_{N}~&:=~\IE_{N}\left(\left(\frac{S}{N}\right)^{2}\right). \label{eq:AN}
\end{align}
Then
\begin{align}
   \Delta_{N}(w)~=~s_{N}\;-2\,(w,\, b_{N})\;+(w,A_{N}\,w). \label{eq:DeltaN}
\end{align}
In the above formulas, we set $\sigma:=N$.
From now on, we assume that $\IP$ is sufficiently random, i.e.\! that \eqref{eq:noDelta} holds.
Moreover, to avoid discussing different cases we also assume that $\rho $ is not trivial in the sense that
\begin{align}\label{eq:rhotriv}
   \mu\{z\mid \rho^{z}=\delta_{0}  \}~<~1\,.
\end{align}
If \eqref{eq:rhotriv} is violated, all voters act independently of each other. This is the `Impartial
Culture' and Penrose's square root law holds (see e.g.$\!\,$ \cite{FelsenthalM} or \cite{HOEC}).

The following result is the key observation which allows us to
evaluate important quantities asymptotically. This theorem explains the large population behaviour of a CBM.
\begin{thm}\label{thm:convthm}
   Suppose that the functions $f_{\lambda}:[-1,1]\to\IR, \lambda=1,\ldots,M$, are continuous on $[-1,0)\cup(0,1] $, and assume that the limits $f_{\lambda}(0+)=\lim_{t\searrow 0}f_{\lambda}(\alpha_{\lambda}t)$ and
$f_{\lambda}(0-)=\lim_{t\nearrow 0}f_{\lambda}(\alpha_{\lambda}t)$ exist. Set
\begin{align*}
   I_{z}\big(f_{\lambda}\big)~:=~\int_{[-1,0)\cup(0,1] } f_{\lambda}(\alpha_{\nu}\,t)\;\rho^{z}(\textup{d}t)\,+\,\frac{1}{2}\big(f_\lambda(0+)+f_\lambda(0-)\big)\; \rho^{z}\{ 0 \}.
\end{align*}

Then
\begin{align}\label{eq:convthm}
   &\IE\,\left( f_{1}\left(\frac{1}{N}S_{1}\right)\,\cdot\,\ldots\,\cdot f_{M}\left(\frac{1}{N}S_{M}\right)\right)~\rightarrow~
\int\,  I_{z}\big(f_{1}\big)\,\cdots\, I_{z}\big(f_{M}\big)
\; \mu(\textup{d}z).
\end{align}

\end{thm}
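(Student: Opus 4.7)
The plan is to exploit the hierarchical conditional structure of the CBM. First I would condition on the global bias $Z = z$ and then on the local biases $T_\lambda = t_\lambda$; by the conditional independence built into Definition~\ref{def:CBM}, the expectation on the left of \eqref{eq:convthm} factorises as
\begin{align*}
   \IE\left(\prod_{\lambda=1}^{M} f_\lambda\!\left(\frac{S_\lambda}{N}\right)\right) ~=~ \int \left(\prod_{\lambda=1}^{M} \int \phi_\lambda^{N}(t_\lambda)\, \rho^{z}(\textup{d}t_\lambda)\right) \mu(\textup{d}z),
\end{align*}
where $\phi_\lambda^{N}(t) := E_{t}\bigl(f_\lambda(S_\lambda/N)\bigr)$ denotes the expectation of $f_\lambda(S_\lambda/N)$ when the $X_{\lambda i}$ are i.i.d.\ $P_{t}$. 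Since $f_\lambda$ is continuous on $[-1,0)\cup(0,1]$ with one-sided limits at $0$, it is bounded on $[-1,1]$, so every $\phi_\lambda^{N}$ is uniformly bounded by $\|f_\lambda\|_\infty$.

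The heart of the argument is the pointwise limit $\phi_\lambda^N(t) \to \tilde{f}_\lambda(t)$, where $\tilde{f}_\lambda(t) := f_\lambda(\alpha_\lambda t)$ for $t \neq 0$ and $\tilde{f}_\lambda(0) := \tfrac{1}{2}(f_\lambda(0+) + f_\lambda(0-))$. For $t \neq 0$, the law of large numbers under $P_t$ gives $S_\lambda/N_\lambda \to t$ in probability, and then \eqref{eq:Nlambda} yields $S_\lambda/N \to \alpha_\lambda t \neq 0$; continuity of $f_\lambda$ at $\alpha_\lambda t$ together with bounded convergence does the rest. The case $t = 0$ is more delicate: the $X_{\lambda i}$ are symmetric $\pm 1$, so $S_\lambda$ is symmetric and $P_0(S_\lambda = 0) \to 0$ (trivially for odd $N_\lambda$, by Stirling for even $N_\lambda$); hence $P_0(S_\lambda > 0) \to \tfrac{1}{2}$ and $P_0(S_\lambda < 0) \to \tfrac{1}{2}$. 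Splitting $\phi_\lambda^N(0)$ according to the sign of $S_\lambda$, and using $|S_\lambda|/N \to 0$ in probability together with the one-sided continuity of $f_\lambda$ at $0$, each half contributes $\tfrac{1}{2} f_\lambda(0\pm)$ in the limit.

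With pointwise convergence and a uniform bound in hand, two successive applications of dominated convergence conclude the argument. For each fixed $z$,
\begin{align*}
   \int \phi_\lambda^N(t)\, \rho^z(\textup{d}t) ~\longrightarrow~ \int \tilde{f}_\lambda(t)\, \rho^z(\textup{d}t) ~=~ I_z(f_\lambda),
\end{align*}
where the atom term $\tfrac{1}{2}(f_\lambda(0+) + f_\lambda(0-))\rho^z\{0\}$ appearing in $I_z(f_\lambda)$ arises precisely because $\tilde{f}_\lambda$ and the map $t \mapsto f_\lambda(\alpha_\lambda t)$ differ only at the single point $t = 0$. The outer $\mu$-integral is then passed to the limit in exactly the same way, using that the product of the inner integrals is uniformly bounded by $\prod_\lambda \|f_\lambda\|_\infty$.

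The main obstacle I expect is the careful treatment of a possible atom of $\rho^z$ at $t = 0$. A naive application of the LLN would suggest $\phi_\lambda^N(0) \to f_\lambda(0)$, but $f_\lambda$ need not even be continuous (or well-defined in a meaningful sense) at $0$; the correct limit $\tfrac{1}{2}(f_\lambda(0+) + f_\lambda(0-))$ reflects the symmetric $O(N^{-1/2})$ fluctuations of $S_\lambda$ under $P_0$, which place the rescaled partial sum on either side of $0$ with equal asymptotic probability. Once this point is handled, the remainder is a routine combination of the law of large numbers, Fubini (legitimate by Remark~\ref{rem:kernel}), and bounded convergence.
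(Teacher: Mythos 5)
Your proposal is correct and follows essentially the same route as the paper's own (much terser) proof: factorise via the conditional independence structure, apply the law of large numbers under $P_{t}$ for $t\neq 0$, handle a possible atom of $\rho^{z}$ at $t=0$ via the symmetry of $S_{\lambda}$ under $P_{0}$ (giving $P_{0}(S_{\lambda}>0)\to\tfrac{1}{2}$ and $P_{0}(S_{\lambda}<0)\to\tfrac{1}{2}$), and pass to the limit by bounded convergence. Your write-up in fact supplies details the paper leaves implicit, in particular the explicit splitting of $E_{0}\bigl(f_{\lambda}(S_{\lambda}/N)\bigr)$ according to the sign of $S_{\lambda}$ and the two applications of dominated convergence.
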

We could handle functions $f_{\lambda}$ with discontinuities (and left and right limits) in other points than $0$ as well, but we need the result only in the above form. The proof below, however, works for the more general case as well.
Theorem \ref{thm:convthm} says that the normalised voting margins $S_\lambda / N $ follow a distribution given by $\rho^z$ and $\mu$ in the large population limit. We can take transformations $f$ of these voting margins and their behaviour will be described by the distributions $\rho^z$ and $\mu$. Chief among these transformations will be the council vote $\chi_\lambda = \chi_\lambda\left(S_\lambda\right)$ cast by each group which presents a point of discontinuity at 0.
\begin{proof}
   By the strong law of large numbers, we get
\begin{align*}
   P_{t}\left(\lim_{n\to\infty} \frac{1}{N_{\lambda}}S_{\lambda}=t\right)~=~1.
\end{align*}
So, if $f$ is continuous on $[-1,1] $, it follows that
\begin{align*}
   \int E_{t}\left(f\left(\frac{1}{N}S_{\lambda}\right) \right)\,\rho^{z}(\textup{d}t)~\to~\int f(\alpha_{\lambda} t)\,\rho^{z}(\textup{d}t)
\end{align*}
for all $z$. From this, \eqref{eq:convthm} follows for continuous $f_{\lambda}$.
To prove \eqref{eq:convthm} in the general case, we observe that
for $N\to\infty$
\begin{align*}
  &P_{0}\left(\frac{1}{N_{\lambda}}S_{\lambda}>0\right)~\to~\frac{1}{2}
\quad\text{and}\quad P_{0}\left(\frac{1}{N_{\lambda}}S_{\lambda}<0\right)~\to~\frac{1}{2}.
\end{align*}
\end{proof}

\begin{defn}\label{def:notation}
   We introduce the following notation for further use:
\begin{align*}
   & m_{1}(\rho)~=~\int t \,\rho^{z}(\textup{d}t), && m_{2}(\rho)~=~\int t^2 \,\rho^{z}(\textup{d}t),\\
    & \om_{1}(\rho)~=~\int |t| \,\rho^{z}(\textup{d}t), && d(\rho)~=~\rho^{z} (0,1]-\rho^{z} [-1,0). \notag
\end{align*}
Note that the above quantities depend on $z $. These quantities are important characteristics of the measures $\mu$ and $\rho$. They measure the strength of the group bias for different values $z$ of the global bias. E.g.\! a positive $m_1(\rho)$ close to 1 implies that the group bias measure $\rho^z$ induces, on average, a strong bias in favour of the issue being considered. Whereas $\om_1(\rho)$ can be interpreted as a measure of intra-group cohesion, the product of $m_1$ and $d(\rho)$ is a measure of inter-group cohesion. These two measures will allow us to compare how strong the intra-group cohesion is versus the inter-group cohesion. These quantities will be used in the calculation of the optimal weights.

For any function $\varphi$ on $[-1,1] $, we  introduce the shorthand notation
\begin{align*}
   \langle \,\varphi\, \rangle~=~\int \varphi(z)\,\mu(\textup{d}z).
\end{align*}
\end{defn}

\begin{thm}\label{thm:limits}
   Assume \eqref{eq:noDelta}, \eqref{eq:Nlambda} and \eqref{eq:rhotriv}. Then
\begin{align*}
   (A_{N})_{\lambda \nu}~&\to~ a~:=~\langle d(\rho)^{2} \rangle,\;\lambda\neq\nu,\\
    (b_{N})_{\lambda}~&\to ~b_{\lambda}~:=~ \langle\om_{1}(\rho)-m_{1}(\rho)d(\rho)\rangle\,\alpha_{\lambda}\;+\;\langle m_{1}(\rho)\,d(\rho) \rangle,\\
\textrm{s}_{N}~&\to~\textrm{s}~:=
~\sum_{\nu=1}^{M}\,\alpha_{\nu}^{2} \big(\langle m_{2}(\rho)\rangle-\langle {m_{1}(\rho)}^{2} \rangle\big) + \langle m_{1}(\rho)^{2} \rangle.
\end{align*}
\end{thm}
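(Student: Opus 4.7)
The plan is to deduce all three limits from Theorem~\ref{thm:convthm} by writing each quantity as the expectation of a suitable product $f_1(S_1/N)\cdots f_M(S_M/N)$. Since $\chi_\lambda = f(S_\lambda/N)$ for $f(x)=1$ if $x>0$ and $f(x)=-1$ if $x\le 0$, and $S/N = \sum_\nu (S_\nu/N)$, every quantity in the statement can be decomposed into terms of this form. The one-sided limits $f(0+)=1$ and $f(0-)=-1$ exist, so Theorem~\ref{thm:convthm} applies; crucially $\tfrac12(f(0+)+f(0-))=0$, which makes the $\rho^z\{0\}$-term vanish in the integral $I_z(f)$.

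For the off-diagonal entry $(A_N)_{\lambda\nu}$ with $\lambda\neq\nu$, I take $f_\lambda=f_\nu = f$ and $f_\mu\equiv 1$ for $\mu\notin\{\lambda,\nu\}$. A direct computation gives $I_z(f)=\rho^z(0,1]-\rho^z[-1,0)=d(\rho)$, and Theorem~\ref{thm:convthm} yields $(A_N)_{\lambda\nu}\to\langle d(\rho)^2\rangle$. For $(b_N)_\lambda$, I split $S=\sum_\mu S_\mu$ and treat two cases. When $\mu=\lambda$, since $S_\lambda\chi_\lambda=|S_\lambda|$ pointwise, I apply Theorem~\ref{thm:convthm} with the continuous function $f_\lambda(x)=|x|$, giving $I_z(|\cdot|)=\alpha_\lambda\,\om_1(\rho)$ and contribution $\alpha_\lambda\langle\om_1(\rho)\rangle$. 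When $\mu\neq\lambda$, I take $f_\mu(x)=x$ and $f_\lambda=f$, giving $I_z(\mathrm{id})=\alpha_\mu m_1(\rho)$ and contribution $\alpha_\mu\langle m_1(\rho)d(\rho)\rangle$. Summing over $\mu$ and using $\sum_\mu\alpha_\mu = 1$ to rewrite $\sum_{\mu\neq\lambda}\alpha_\mu=1-\alpha_\lambda$ gives exactly the claimed $\alpha_\lambda\langle\om_1(\rho)-m_1(\rho)d(\rho)\rangle+\langle m_1(\rho)d(\rho)\rangle$.

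For $s_N$, I expand $(S/N)^2=\sum_{\mu,\nu}(S_\mu/N)(S_\nu/N)$. Diagonal terms $\mu=\nu$ follow from Theorem~\ref{thm:convthm} with $f_\mu(x)=x^2$, yielding $I_z(x^2)=\alpha_\mu^2 m_2(\rho)$ and contribution $\alpha_\mu^2\langle m_2(\rho)\rangle$. Off-diagonal terms use $f_\mu(x)=f_\nu(x)=x$, contributing $\alpha_\mu\alpha_\nu\langle m_1(\rho)^2\rangle$. Regrouping via $\sum_{\mu\neq\nu}\alpha_\mu\alpha_\nu=(\sum_\mu\alpha_\mu)^2-\sum_\mu\alpha_\mu^2=1-\sum_\mu\alpha_\mu^2$ produces the claimed formula.

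The only mildly delicate point is to be careful that $\chi_\lambda$ corresponds to a discontinuous $f$ at $0$ and confirm that Theorem~\ref{thm:convthm} is applicable (its hypothesis only requires one-sided limits at $0$). Beyond that, everything reduces to bookkeeping: matching the indices $\lambda,\nu,\mu$, using the linearity of expectation, and recognising the $\mu$- and $\rho^z$-integrals as the shorthand quantities $\langle\cdot\rangle$, $m_1$, $m_2$, $\om_1$, and $d$ from Definition~\ref{def:notation}. No further estimation or limit argument is needed beyond the single invocation of Theorem~\ref{thm:convthm} in each case.
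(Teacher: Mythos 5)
Your proposal is correct and is precisely the computation the paper has in mind: the paper states only that Theorem~\ref{thm:limits} ``follows immediately from Theorem~\ref{thm:convthm}'', and your argument supplies exactly the intended details, namely writing $\chi_{\lambda}$ as a sign-type function of $S_{\lambda}/N$ with $\tfrac12(f(0+)+f(0-))=0$ so that $I_{z}(f)=d(\rho)$, and pairing it with the continuous choices $|x|$, $x$, and $x^{2}$ to produce $\om_{1}(\rho)$, $m_{1}(\rho)$, and $m_{2}(\rho)$. The bookkeeping with $\sum_{\mu}\alpha_{\mu}=1$ checks out in all three limits, so there is nothing to add.
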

Theorem \ref{thm:limits} follows immediately from Theorem \ref{thm:convthm}.

Informally speaking, Theorem \ref{thm:limits} says that the minimisation problem \eqref{eq:DeltaN}
`converges' to the minimisation problem
\begin{align}\label{eq:DeltaInf}
   \min\;\Delta_{\infty}(v_{1},\ldots,v_{M})~=~s\;-2\,(v,\, b)\;+(v,A\,v).
\end{align}
In the following, we try to explore the validity of this informal idea. The following theorem implies that for positive definite limiting coefficient matrices $A$ the optimal weights of the finite population problem converge to the optimal weights of the asymptotic problem.
\begin{thm}\label{cor:A}
   The matrices $A_{N}$ converge (in operator norm) to the matrix
\begin{align}\label{eq:Ainf}
   A_{\lambda\nu}~=~\left\{
                      \begin{array}{ll}
                        1, & \hbox{if $\lambda=\nu $,} \\
                        a, & \hbox{otherwise,}
                      \end{array}
                    \right.
\end{align}
with $a=\langle d(\rho)^{2} \rangle$.

Moreover, $A$ is positive semi-definite. $A$ is positive definite if $a<1$. In this case,
\begin{align}\label{eq:konvinv}
  {A_{N}}^{-1}~\to~A^{-1}
\end{align}
and
\begin{align}\label{eq:Ainv}
   {\left(A^{-1}\right)}_{\lambda\nu}~=~\frac{1}{D}\;\left\{
                           \begin{array}{ll}
                             1+(M-2)a, & \hbox{if $\lambda=\nu$,} \\
                             -a, & \hbox{otherwise,}
                           \end{array}
                         \right.
\end{align}
 where $D=(1-a)\big((1+(M-1)a\big)$.
\end{thm}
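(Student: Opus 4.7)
The plan is to first identify the structure of the limit matrix $A$, then read off its spectral properties from that structure, and finally invert $A$ by an ansatz while transferring invertibility back to $A_{N}$ through a continuity argument.

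For the convergence $A_N \to A$, I would observe that the diagonal entries satisfy $(A_N)_{\lambda\lambda}=\mathbb{E}_N(\chi_\lambda^2)=1$ for every $N$ since $\chi_\lambda\in\{-1,+1\}$, while the off-diagonal entries $(A_N)_{\lambda\nu}$ converge to $a=\langle d(\rho)^2\rangle$ by Theorem \ref{thm:limits}. On the finite-dimensional space of $M\times M$ matrices, entrywise convergence is equivalent to convergence in operator norm, which gives the first claim.

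Next I would write $A=(1-a)\,I+a\,J$, where $J$ is the $M\times M$ all-ones matrix. Since $J$ has eigenvalue $M$ on the span of $(1,\ldots,1)$ and eigenvalue $0$ on the orthogonal complement, the eigenvalues of $A$ are $1+(M-1)a$ (simple) and $1-a$ (with multiplicity $M-1$). Because $d(\rho)\in[-1,1]$, we have $a\in[0,1]$, so both eigenvalues are nonnegative, showing that $A$ is positive semi-definite; positive definiteness is then equivalent to $1-a>0$, i.e.\! $a<1$. (Alternatively, positive semi-definiteness is inherited as a limit of positive semi-definite matrices supplied by Proposition \ref{prop:posdef}.)

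Assuming $a<1$, \eqref{eq:konvinv} follows from the continuity of matrix inversion on the open set of invertible matrices: since $A_N\to A$ in operator norm and $A$ lies in this open set, $A_N$ is invertible for all sufficiently large $N$ and $A_N^{-1}\to A^{-1}$. For the explicit form of $A^{-1}$, the symmetry of $A$ suggests the ansatz $A^{-1}=\alpha I+\beta J$. Expanding $((1-a)I+aJ)(\alpha I+\beta J)$ and using $J^2=M\,J$ gives a $2\times 2$ linear system in $(\alpha,\beta)$ whose solution yields the diagonal and off-diagonal entries stated in \eqref{eq:Ainv}, with the common denominator $D=(1-a)\bigl(1+(M-1)a\bigr)$ arising as the product of the two distinct eigenvalues of $A$.

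There is no serious obstacle in this argument; the only point requiring any care is the verification that $a\in[0,1]$, which ensures positive semi-definiteness holds automatically and lets one read off the invertibility threshold $a<1$ directly from the spectrum.
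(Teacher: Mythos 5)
Your proposal is correct and follows essentially the same route as the paper: your decomposition $A=(1-a)I+aJ$ and the resulting eigenvalues are just the spectral restatement of the paper's quadratic-form identity $(x,Ax)=(1-a)\sum_{\lambda}x_{\lambda}^{2}+a\bigl(\sum_{\lambda}x_{\lambda}\bigr)^{2}$, and your appeal to continuity of matrix inversion at the invertible matrix $A$ is precisely what the paper establishes explicitly via its Neumann-series bound on $\|A_{N}^{-1}-A^{-1}\|$. The ansatz $A^{-1}=\alpha I+\beta J$ is a clean way to carry out the ``direct calculation'' the paper leaves to the reader, and your identification of $D$ as the product of the two distinct eigenvalues is accurate.
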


\begin{proof}
We note that $0\leq a\leq 1$.
Since, for any $x\in\IR^{M}$,
\begin{align*}
   \left(x,Ax\right)~=~(1-a)\,\sum_{\lambda=1}^{M}x_{\lambda}^{2}\;+\;a\,\left(\sum_{\lambda=1}^{M}x_{\lambda}\right)^{2},
\end{align*}
we see that $A$ is positive semi-definite in general and positive definite if $a<1$.

Let $I$ stand for the $M\times M$ identity matrix. To prove \eqref{eq:konvinv} we compute,
\begin{align}
   \|{A_{N}}^{-1}-A^{-1}\|~&=~\Big\|\,A^{-1}\,\Big(\big(I+(A_{N}-A)A^{-1}\big)^{-1}-I\Big)\,\Big\|\notag\\
&\leq~\|A^{-1}\|\;\sum_{k=1}^{\infty}\,\|A_{N}-A\|^{k}\,\|A^{-1}\|^{k}\notag\\
&=~\frac{\|A^{-1}\|^2\,\|A_{N}-A\|}{1-\|A^{-1}\|\|A_{N}-A\|}\label{eq:exp}.
\end{align}
Since $\|A_{N}-A\|$ tends to 0, \eqref{eq:exp} goes to 0 as well.
The claim \eqref{eq:Ainv} follows by direct
calculation.
\end{proof}
\begin{defn}
   We say that the collective bias model CBM$(\mu,\rho)$ is \emph{tightly correlated} if $a=\langle d(\rho)^{2} \rangle=1$.
\end{defn}
As we will see, tight correlation implies that all groups end up voting unanimously in the council. For now, we characterise tight correlation in terms of the probabilities assigned by $\rho$ for different values $z$ of the global bias. The key idea is that $\rho^z$ assigns probability 1 to either $(0,1]$ or $[0,1)$ for ($\mu$-almost) all $z$, and thus all group biases will be of the same sign, inducing the aforementioned unanimous council vote.
\begin{prop}\label{prop:tight}
   The collective bias model $CBM(\mu,\rho)$ is tightly correlated if and only if
for $\mu$-almost all $z$ either $\rho^{z} (0,1]=1$ or $\rho^{z} [-1,0)=1$ holds.
\end{prop}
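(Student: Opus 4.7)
The plan is to reduce the statement to a pointwise claim about the function $d(\rho)(z)$ and then appeal to a standard $\mu$-a.e.\ argument. Set $p(z):=\rho^{z}(0,1]$ and $q(z):=\rho^{z}[-1,0)$, so that $d(\rho)(z)=p(z)-q(z)$. Since $\rho^{z}$ is a probability measure on $[-1,1]$, the quantities $p(z), q(z)\geq 0$ satisfy $p(z)+q(z)\leq 1$ (the slack being $\rho^{z}\{0\}$). The first observation is therefore that $|d(\rho)(z)|\leq p(z)+q(z)\leq 1$ for every $z$, hence $d(\rho)^{2}\leq 1$ pointwise.

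Next I would use the elementary characterisation of when equality holds. Because $p,q\in[0,1]$, the identity $(p+q)^{2}-(p-q)^{2}=4pq$ shows that $|p(z)-q(z)|=1$ forces both $p(z)q(z)=0$ and $p(z)+q(z)=1$. The only non-negative solutions are $(p,q)=(1,0)$ or $(p,q)=(0,1)$, which translates exactly into $\rho^{z}(0,1]=1$ or $\rho^{z}[-1,0)=1$. Conversely, each of those two conditions immediately yields $d(\rho)(z)^{2}=1$.

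Finally I would integrate. Since $d(\rho)^{2}\leq 1$ everywhere and $\mu$ is a probability measure, the identity
\begin{align*}
   \langle d(\rho)^{2}\rangle~=~\int d(\rho)(z)^{2}\,\mu(\textup{d}z)~=~1
\end{align*}
is equivalent to $d(\rho)(z)^{2}=1$ for $\mu$-almost all $z$; this is the standard fact that a non-negative integrand bounded by $1$ with integral $1$ must equal $1$ almost everywhere. Combined with the previous paragraph, this gives the desired equivalence.

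There is no real obstacle here: the statement is essentially a book-keeping exercise translating the condition $\langle d(\rho)^{2}\rangle=1$ into a support condition on $\rho^{z}$. The only point requiring any care is checking that $|d(\rho)|\leq 1$ uses the full constraint $p+q\leq 1$ rather than just $p,q\leq 1$ separately, which is what forces the value $|d(\rho)|=1$ to be attained only at the two extreme points $(p,q)\in\{(1,0),(0,1)\}$ and hence excludes any mass at $0$.
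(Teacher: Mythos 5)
Your proof is correct and follows essentially the same route as the paper: bound $d(\rho)^{2}$ by $1$ pointwise, use that a non-negative function with zero integral vanishes $\mu$-a.e.\ to get $|d(\rho)|=1$ almost everywhere, and translate that back into the support condition on $\rho^{z}$. The only difference is that you make explicit (via $p+q\leq1$ and the identity $(p+q)^{2}-(p-q)^{2}=4pq$) the step that $|d(\rho)|=1$ forces $(p,q)\in\{(1,0),(0,1)\}$, which the paper asserts without comment.
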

\begin{proof}
   Since $0\leq d(\rho)^{2}\leq 1$ for all $z$, we have $\xi:=1-d(\rho)^{2}\geq 0$ and $\int \xi\,d\mu=0$ implies
    $\xi=0$ $\mu$-almost surely. It follows that $|d(\rho)|=1$ for $\mu$-almost all $z $, so $\rho^{z} (0,1]=1$
or $\rho^{z} [-1,0)=1$.
\end{proof}

\section{Optimal Weights}\label{sec:optw}
In this section, we investigate the asymptotics of the optimal weights of CBMs
for large $N$. As above, we assume \eqref{eq:noDelta}, \eqref{eq:Nlambda}, and \eqref{eq:rhotriv} for the rest of this paper.

The tightly correlated case needs a different treatment, so we first assume that the model CBM$(\mu,\rho)$ is \emph{not} tightly correlated, i.e.$\!\,$ that $a=\langle d(\rho)^{2} \rangle<1$, in this section. Section \ref{sec:optwtc} discusses the tightly correlated case.

By Theorem \ref{thm:weights}, for fixed $N $, there are unique optimal weights $w_{N} $.
\begin{thm}\label{thm:asweights}
   If the model $CBM(\mu,\rho) $ is not tightly correlated, then the optimal weights $w^{(N)}$, i.e.\!
the minima of $\Delta_{N} $, converge
for $N\to\infty$ to the minima of $\Delta_{\infty}$ (defined in \eqref{eq:DeltaInf}), and these weights $w_{\lambda}$ are given by
\begin{align}
   w_{\lambda}~=~C_{1}\,\alpha_{\lambda}\;+\;C_{2}\label{eq:optw},
\end{align}
with coefficients depending on $\mu,\rho$, and $M$ but not on the $\alpha_{\lambda}$.

More precisely,
\begin{align}
   C_{1}~&=~\frac{1}{1-a}\,\Big(\langle \om_{1}(\rho) \rangle-\langle m_{1}(\rho)\, d(\rho) \label{eq:C1} \rangle\Big)\\
\text{and}\quad C_{2}~&=~\frac{1}{1-a}\,\frac{\langle m_{1}(\rho)\,d(\rho) \rangle-a\langle \om_{1}(\rho)\rangle}{1+(M-1)\,a}.\label{eq:C2}
\end{align}

Moreover,
\begin{align}\label{eq:sumw}
   \sum_\lambda w_{\lambda}~=~
\frac{\langle \om_{1}(\rho) \rangle\,+\,(M-1)\langle m_{1}(\rho)d(\rho) \rangle}{1+(M-1)\,a}.
\end{align}
\end{thm}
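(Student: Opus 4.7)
The plan is to leverage the convergence results already established in Theorems \ref{thm:limits} and \ref{cor:A}, combined with the characterisation of the optimal weights in \eqref{eq:w}, and then to perform a direct computation of $A^{-1}b$ using the explicit formula \eqref{eq:Ainv}.

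First, since the model is sufficiently random, Theorem \ref{thm:weights} gives the unique finite-$N$ optimum as $w^{(N)}=A_N^{-1}b_N$. Because we are in the non-tightly correlated case $a<1$, Theorem \ref{cor:A} guarantees $A_N^{-1}\to A^{-1}$ in operator norm, and Theorem \ref{thm:limits} gives $b_N\to b$ componentwise. Thus $w^{(N)}\to w:=A^{-1}b$, and positive definiteness of $A$ (again by Theorem \ref{cor:A}) ensures that this limiting vector is in fact the unique minimiser of $\Delta_{\infty}$, since $\nabla \Delta_{\infty}(v)=2(Av-b)$.

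Next, I would compute $w=A^{-1}b$ explicitly. Writing $\beta:=\langle\om_1(\rho)-m_1(\rho)d(\rho)\rangle$ and $\gamma:=\langle m_1(\rho)d(\rho)\rangle$, Theorem \ref{thm:limits} gives $b_\lambda=\beta\alpha_\lambda+\gamma$. Applying \eqref{eq:Ainv} with $D=(1-a)(1+(M-1)a)$, I split the computation into two pieces: the action of $A^{-1}$ on the constant vector $\gamma\mathbf{1}$, and its action on $\beta(\alpha_1,\ldots,\alpha_M)$. For the first, a straightforward row-sum gives $\sum_\nu(A^{-1})_{\lambda\nu}=1/(1+(M-1)a)$, so $A^{-1}(\gamma\mathbf{1})=\gamma\mathbf{1}/(1+(M-1)a)$. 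For the second, using $\sum_{\nu\neq\lambda}\alpha_\nu=1-\alpha_\lambda$, one obtains
\begin{align*}
\sum_\nu(A^{-1})_{\lambda\nu}\alpha_\nu~=~\frac{1}{D}\Big[(1+(M-1)a)\,\alpha_\lambda-a\Big]~=~\frac{\alpha_\lambda}{1-a}-\frac{a}{D}.
\end{align*}
Summing these contributions produces $w_\lambda=C_1\alpha_\lambda+C_2$ with $C_1=\beta/(1-a)$, matching \eqref{eq:C1}, and $C_2=[\gamma(1-a)-a\beta]/D$. Simplifying the numerator via $\beta+\gamma=\langle\om_1(\rho)\rangle$ converts $C_2$ into $[\gamma-a\langle\om_1(\rho)\rangle]/D$, matching \eqref{eq:C2}.

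Finally, to obtain \eqref{eq:sumw}, I sum the formula: since $\sum_\lambda\alpha_\lambda=1$, $\sum_\lambda w_\lambda=C_1+MC_2$. Bringing this over the common denominator $(1-a)(1+(M-1)a)$ and collecting terms in $\langle\om_1(\rho)\rangle$ and $\gamma$ separately, the coefficient of $\langle\om_1(\rho)\rangle$ simplifies to $(1-a)$ and the coefficient of $\gamma$ simplifies to $(M-1)(1-a)$; the factor $(1-a)$ then cancels, leaving the claimed expression.

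The main obstacle is really just bookkeeping: the structural part of the proof is immediate from Theorems \ref{thm:weights}, \ref{thm:limits}, and \ref{cor:A}, and the only delicate step is the algebraic simplification that reveals the representation $w_\lambda=C_1\alpha_\lambda+C_2$ in the symmetric form \eqref{eq:C1}--\eqref{eq:C2}; the key identity that makes this work is $\beta+\gamma=\langle\om_1(\rho)\rangle$, which is what allows one to rewrite $\gamma(1-a)-a\beta$ as $\gamma-a\langle\om_1(\rho)\rangle$.
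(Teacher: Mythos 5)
Your proposal is correct and follows exactly the route the paper intends: the paper's entire proof is the one-line remark that the theorem ``follows from Theorems \ref{thm:limits} and \ref{cor:A} by a straightforward computation,'' and you have carried out that computation correctly (the row sums of $A^{-1}$, the identity $\beta+\gamma=\langle\om_{1}(\rho)\rangle$, and the cancellation of the factor $(1-a)$ in \eqref{eq:sumw} all check out). No gaps; you have simply made explicit what the authors left implicit.
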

Theorem \ref{thm:asweights} follows from Theorems \ref{thm:limits} and \ref{cor:A} by a straightforward computation.
\begin{cor}Under the assumptions of Theorem \ref{thm:asweights},
the normalised weights $\overline{w}^{(N)} $ converge to
\begin{align}
   \overline{w}_{\lambda}~=~\overline{C}_{1}\,\alpha_{\lambda}\;+\;\overline{C}_{2}\label{eq:normw}
\end{align}
\end{cor}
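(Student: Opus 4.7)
The plan is to derive this directly from Theorem \ref{thm:asweights} by observing that normalisation is just coordinate-wise division by the sum, a continuous operation away from a zero denominator. The only two ingredients I need are (i) joint convergence of the weights and of their sum, and (ii) verification that the limiting sum does not vanish.

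First I would invoke Theorem \ref{thm:asweights} to record $w^{(N)}_{\lambda}\to w_{\lambda}=C_{1}\alpha_{\lambda}+C_{2}$ for each $\lambda$. Summing over $\lambda$ and using $\sum_{\lambda}\alpha_{\lambda}=1$ yields
\begin{align*}
   \sum_{\nu=1}^{M}\,w^{(N)}_{\nu}~\longrightarrow~S~:=~C_{1}\,+\,M\,C_{2},
\end{align*}
which by \eqref{eq:sumw} admits the closed form $S=\bigl(\langle \om_{1}(\rho)\rangle+(M-1)\langle m_{1}(\rho)\,d(\rho)\rangle\bigr)\big/\bigl(1+(M-1)a\bigr)$. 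Assuming $S\neq 0$, the continuous mapping theorem applied to $(u,s)\mapsto u/s$ on $\IR\times(\IR\setminus\{0\})$ then gives
\begin{align*}
   \overline{w}^{(N)}_{\lambda}~=~\frac{w^{(N)}_{\lambda}}{\sum_{\nu}w^{(N)}_{\nu}}~\longrightarrow~\frac{w_{\lambda}}{S}~=~\frac{C_{1}}{S}\,\alpha_{\lambda}\,+\,\frac{C_{2}}{S},
\end{align*}
which is exactly \eqref{eq:normw} with $\overline{C}_{1}:=C_{1}/S$ and $\overline{C}_{2}:=C_{2}/S$.

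The main (and essentially only) obstacle will be checking that $S\neq 0$. The denominator $1+(M-1)a$ is strictly positive since $0\le a\le 1$. For the numerator, assumption \eqref{eq:rhotriv} gives $\langle \om_{1}(\rho)\rangle>0$, while $\langle m_{1}(\rho)\,d(\rho)\rangle$ is symmetric in $z\mapsto -z$ under Assumption \ref{ass:sym} but has no a priori sign. In the natural case where $m_{1}(\rho^{z})$ and $d(\rho^{z})$ share their sign (so that a positive global bias induces a positive group bias), the integral is non-negative and hence $S>0$; otherwise one would verify $S\neq 0$ directly in each concrete model of Section \ref{sec:specmod}, or impose it as a standing regularity condition. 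The rest is bookkeeping.
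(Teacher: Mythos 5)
Your argument is correct and is essentially the paper's: the corollary is stated without a separate proof because it is immediate from Theorem \ref{thm:asweights}, Definition \ref{def:wb}, and the fact that division by the sum is continuous where the sum is nonzero. The one point you rightly isolate, $S\neq 0$, is exactly what the paper covers by the remark following Theorem \ref{thm:asweights}, which asserts that the sum \eqref{eq:sumw}, i.e.\ $\bigl(\langle \om_{1}(\rho)\rangle+(M-1)\langle m_{1}(\rho)\,d(\rho)\rangle\bigr)/\bigl(1+(M-1)a\bigr)$, is strictly positive; your closed form for $S=C_{1}+MC_{2}$ agrees with this expression. Your caution is warranted rather than pedantic: $\langle m_{1}(\rho)\,d(\rho)\rangle$ indeed has no a priori sign (one can build symmetric kernels with $m_{1}(\rho^{z})\,d(\rho^{z})<0$), the paper does not supply a proof of the positivity asserted in that remark, and for large $M$ the term $(M-1)\langle m_{1}(\rho)\,d(\rho)\rangle$ is not obviously dominated by $\langle\om_{1}(\rho)\rangle$ — so treating $S\neq 0$ as a hypothesis to be verified, as you do, is the honest reading.
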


\begin{rem}
   \begin{enumerate}
      \item By Theorem \ref{thm:asweights}, the optimal weights are always the sum of a term
            proportional to the size of the population and a term independent of the population.
            The weights of the states in the Electoral College of the U.S. constitution are
            precisely chosen in this fashion.
\item In the limit $a\to 0$, meaning that the groups are almost independent, the constant term in
\eqref{eq:optw} tends to 0, so that $\overline{w}_{\lambda}\to \alpha_{\lambda}$ which
is the result for the simple CBM (see \cite{HOEC}).
\item The sum of the weights \eqref{eq:sumw} is strictly positive and finite, even in the limit
    $a\to 1$. This indicates that the choice $\sigma=N$ is reasonable. In fact,
\begin{align}\label{eq:sumw1}
   \lim_{a\to 1} \sum_\lambda w_{\lambda}~=~\langle \om_{1}(\rho) \rangle\,.
\end{align}
   \end{enumerate}
\end{rem}
\begin{cor}
   Under the assumptions of Theorem \ref{thm:asweights}, the minimal democracy deficit $\Delta_{N} $ is asymptotically of the form
\begin{align*}
   \Delta_{\infty}~=~D_{1}\,\sum_{\lambda=1}^{M}{\alpha_{\lambda}}^{2}\;+\;D_{2}.
\end{align*}
\end{cor}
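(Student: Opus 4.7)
The plan is to exploit the fact that at the optimum of a positive-definite quadratic form the minimum value simplifies considerably, and then substitute the closed expressions from Theorems~\ref{thm:limits} and \ref{thm:asweights}.

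First, recall from \eqref{eq:DeltaN} that
\begin{align*}
   \Delta_{N}(w)~=~s_{N}\;-\;2\,(w,b_{N})\;+\;(w,A_{N}w).
\end{align*}
At the unique minimiser $w^{(N)}$ we have $A_{N}\,w^{(N)}=b_{N}$, hence $(w^{(N)},A_{N}w^{(N)})=(w^{(N)},b_{N})$, so
\begin{align*}
   \Delta_{N}(w^{(N)})~=~s_{N}\;-\;(w^{(N)},b_{N}).
\end{align*}
By Theorems~\ref{thm:limits} and \ref{thm:asweights}, $s_{N}\to s$, $b_{N}\to b$, and $w^{(N)}\to w$ with $w_{\lambda}=C_{1}\alpha_{\lambda}+C_{2}$. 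Passing to the limit gives $\Delta_{\infty}=s-(w,b)$, where the inner product is taken in the asymptotic sense.

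Next I would substitute the explicit forms. Writing $\beta_{1}:=\langle\om_{1}(\rho)-m_{1}(\rho)d(\rho)\rangle$ and $\beta_{2}:=\langle m_{1}(\rho)d(\rho)\rangle$, Theorem~\ref{thm:limits} gives $b_{\lambda}=\beta_{1}\alpha_{\lambda}+\beta_{2}$. Then
\begin{align*}
   (w,b)~&=~\sum_{\lambda=1}^{M}(C_{1}\alpha_{\lambda}+C_{2})(\beta_{1}\alpha_{\lambda}+\beta_{2})\\
         &=~C_{1}\beta_{1}\sum_{\lambda=1}^{M}\alpha_{\lambda}^{2}\;+\;(C_{1}\beta_{2}+C_{2}\beta_{1})\sum_{\lambda=1}^{M}\alpha_{\lambda}\;+\;M\,C_{2}\beta_{2},
\end{align*}
and since $\sum_{\lambda}\alpha_{\lambda}=1$, only the sum of squares has an $\alpha$-dependence.

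Finally, using the formula for $s$ from Theorem~\ref{thm:limits}, namely $s=(\langle m_{2}(\rho)\rangle-\langle m_{1}(\rho)^{2}\rangle)\sum_{\lambda}\alpha_{\lambda}^{2}+\langle m_{1}(\rho)^{2}\rangle$, collecting coefficients of $\sum_{\lambda}\alpha_{\lambda}^{2}$ and of the constant term yields the claimed form with
\begin{align*}
   D_{1}~&=~\langle m_{2}(\rho)\rangle-\langle m_{1}(\rho)^{2}\rangle-C_{1}\beta_{1},\\
   D_{2}~&=~\langle m_{1}(\rho)^{2}\rangle-C_{1}\beta_{2}-C_{2}\beta_{1}-M\,C_{2}\beta_{2},
\end{align*}
both of which depend on $\mu,\rho,M$ but not on the $\alpha_{\lambda}$.

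There is no real obstacle here; the only thing to watch is that the identity $(w,Aw)=(w,b)$ at the optimum depends on $A$ being positive definite, which by Theorem~\ref{cor:A} is guaranteed precisely by the assumption that the model is not tightly correlated ($a<1$). The remainder is bookkeeping, and one could even simplify $D_{1}$ and $D_{2}$ further using the explicit expressions \eqref{eq:C1} and \eqref{eq:C2} for $C_{1}$ and $C_{2}$, but the structural statement of the corollary does not require this.
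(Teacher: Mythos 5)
Your proof is correct and follows essentially the route the paper intends: the paper gives no explicit derivation, merely remarking that $D_{1}$ and $D_{2}$ can be computed from \eqref{eq:Ainv}, \eqref{eq:C1}, and \eqref{eq:C2}, which amounts to exactly the substitution and bookkeeping you carry out. Your use of the identity $\Delta_{N}(w^{(N)})=s_{N}-(w^{(N)},b_{N})$ at the optimum (equivalently $s-(b,A^{-1}b)$ in the limit) is a clean way to organise that computation, and your observation that positive definiteness of $A$ (i.e.\! $a<1$) underpins the limiting argument is the right caveat.
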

\begin{rem}
 The constants $D_{1}$ and $D_{2}$ depend on $\mu, \rho, M$ and can be computed from \eqref{eq:Ainv}, \eqref{eq:C1}, and \eqref{eq:C2}.
\end{rem}

\section{Optimal Weights for Tight Correlations}\label{sec:optwtc}

Now we turn to the case of tightly correlated models, i.e.$\!\,$ $a=1$.

Then, in the limit $N\to\infty$, setting $\sigma:=N $, equation \eqref{eq:LES} which describes the critical
points of $\Delta_{N} $ tends to $\tilde{A}w=b$ with
\begin{align*}
   \tilde{A}_{\lambda\nu}~=~1 \qquad\text{for all } \lambda,\nu\,.
\end{align*}
The matrix $\tilde{A}$ is degenerate. It has an $(M-1)$-fold degenerate eigenvalue at $0$ and a simple
eigenvalue at $M $.

The democracy deficit $\Delta_{N}$ tends to
\begin{align}
   \Delta_{\infty}~&=~\sum_{\lambda=1}^{M}\,\alpha_{\lambda}^{2}\,\big(\langle m_{2}(\rho) \rangle-\langle m_{1}(\rho)^{2} \rangle\big)\;\;+\langle m_{1}(\rho)^{2} \rangle\notag\\
&\quad-\;2\,\langle \om_{1}(\rho) \rangle\,\sum_{\lambda=1}^{M}\,w_{\lambda}\;+\left(\sum_{\lambda=1}^{M}\,w_{\lambda}\right)^{2}.\label{eq:da1}
\end{align}
\eqref{eq:da1} is an equation in $\sum_{\lambda}w_{\lambda}$. The extrema of $\Delta_{\infty}$ are all weights $w_{\lambda}$ such that
\begin{align*}
   \sum_{\lambda=1}^{M}\,w_{\lambda}~=~\langle \om_{1}(\rho) \rangle\,.
\end{align*}
This condition is in agreement with \eqref{eq:sumw1}.

\begin{thm}\label{thm:optwtight}
   Suppose $a=1$. If
\begin{align*}\sum_{\lambda=1}^{M} w_{\lambda}~ =~ \sum_{\lambda=1}^{M} v_{\lambda},
\end{align*}
then
\begin{align*}
   \Delta_{N}(w)~-\Delta_{N}(v)~\to~ 0\qquad\text{as } N\to\infty.
\end{align*}
In particular, any tuple $w$ of weights with $\sum_\lambda w_{\lambda}=\langle \om_{1}(\rho) \rangle$
is close to the minimal democracy deficit in the sense that
\begin{align*}
\Delta_{N}(w)~\to~\min_{v}\; \Delta_{\infty}(v).
\end{align*}
\end{thm}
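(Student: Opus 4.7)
The plan is to leverage Theorem \ref{thm:limits} to obtain a pointwise limit $\Delta_N(w)\to \Delta_\infty(w)$ for every fixed weight vector $w$, and then to show that in the tightly correlated case the limit $\Delta_\infty(w)$ depends on $w$ only through the scalar $\sigma:=\sum_\lambda w_\lambda$. Both parts of the theorem then follow immediately: part~1 because $\Delta_\infty$ agrees on $w$ and $v$ when their sums coincide, and part~2 because a one-variable quadratic is elementary to minimise. The only substantive step is to identify, under the assumption $a=1$, what the limiting vector $b$ and matrix $A$ look like.

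The key computation uses Proposition \ref{prop:tight}. Tight correlation is equivalent to $|d(\rho)(z)|=1$ for $\mu$-almost every $z$. On the event $\{\rho^z(0,1]=1\}$ one has $d(\rho)=1$ and $m_1(\rho)=\om_1(\rho)$; on the event $\{\rho^z[-1,0)=1\}$ one has $d(\rho)=-1$ and $m_1(\rho)=-\om_1(\rho)$. In either case $m_1(\rho)\,d(\rho)=\om_1(\rho)$ holds pointwise in $z$, so
\[
\langle m_1(\rho)\,d(\rho)\rangle ~=~ \langle \om_1(\rho)\rangle,\qquad \langle \om_1(\rho)-m_1(\rho)\,d(\rho)\rangle~=~0.
\]
Substituting into the formula for $b_\lambda$ in Theorem \ref{thm:limits} makes the $\alpha_\lambda$-dependent term vanish, so that $b_\lambda=\langle \om_1(\rho)\rangle$ for every $\lambda$. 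Moreover, with $a=1$, the limiting matrix $A$ from Theorem \ref{cor:A} has all entries equal to $1$, hence $(w,Aw)=\sigma^2$ and $(w,b)=\langle \om_1(\rho)\rangle\,\sigma$.

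Combining these ingredients with \eqref{eq:DeltaN} and the convergences $s_N\to s$, $b_N\to b$, $A_N\to A$ from Theorem \ref{thm:limits} and Theorem \ref{cor:A} gives, for every fixed $w\in\IR^M$,
\[
\Delta_N(w)~\longrightarrow~\Delta_\infty(w)~=~s\,-\,2\langle \om_1(\rho)\rangle\,\sigma\,+\,\sigma^2,\qquad \sigma=\sum_{\lambda=1}^M w_\lambda,
\]
in agreement with \eqref{eq:da1}. The right-hand side depends on $w$ only through $\sigma$. Hence, if $\sum_\lambda w_\lambda=\sum_\lambda v_\lambda$, then $\Delta_\infty(w)=\Delta_\infty(v)$ and subtracting the two convergent sequences yields $\Delta_N(w)-\Delta_N(v)\to 0$, establishing part~1. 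For part~2, the one-variable quadratic $\sigma\mapsto s-2\langle \om_1(\rho)\rangle\,\sigma+\sigma^2$ is minimised at $\sigma=\langle \om_1(\rho)\rangle$, so any tuple $w$ with $\sum_\lambda w_\lambda=\langle \om_1(\rho)\rangle$ satisfies $\Delta_\infty(w)=\min_v\Delta_\infty(v)$, whence $\Delta_N(w)\to \min_v\Delta_\infty(v)$.

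The expected main obstacle is simply the bookkeeping identity $m_1(\rho)\,d(\rho)=\om_1(\rho)$ valid $\mu$-a.s.\! in the tight case, which collapses the structure of $b$. Once this is in hand, everything else is a direct application of the already proved limit theorems, and no new analytical input is needed.
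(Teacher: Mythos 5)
Your proposal is correct and follows essentially the same route as the paper: the paper's argument is the discussion preceding the theorem, which passes to the limit $\Delta_\infty$ in \eqref{eq:da1} via Theorems \ref{thm:limits} and \ref{cor:A} and observes that it is a quadratic in $\sum_\lambda w_\lambda$ alone, minimised at $\langle \om_{1}(\rho) \rangle$. Your explicit verification that $m_1(\rho)\,d(\rho)=\om_1(\rho)$ holds $\mu$-almost surely under tight correlation (via Proposition \ref{prop:tight}), which collapses $b_\lambda$ to the constant $\langle \om_1(\rho)\rangle$, is precisely the bookkeeping the paper leaves implicit.
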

Theorem \ref{thm:optwtight} implies that for large systems with tight correlation `it doesn't matter'
how the weights are distributed among the groups. This assertion is confirmed by the following observation:
\begin{thm}\label{thm:unani}
   If the model CBM($\mu,\rho $) is tightly correlated, then
\begin{align*}
   \IP\,\Big(S_{\lambda}>0  \text{ for all } \lambda \quad\text{or }\quad S_{\lambda}<0  \text{ for all } \lambda \Big)~\to~1\quad\text{as }N\to\infty.
\end{align*}
\end{thm}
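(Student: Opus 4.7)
The strategy is to condition on the global bias $Z$, then on the local biases $(T_1,\ldots,T_M)$, and reduce the statement to the strong law of large numbers applied separately within each group.

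First, I would invoke Proposition \ref{prop:tight}: tight correlation is equivalent to the property that for $\mu$-almost every $z$, either $\rho^z(0,1]=1$ or $\rho^z[-1,0)=1$. Let
\[
U_+:=\{z:\rho^z(0,1]=1\}\qquad\text{and}\qquad U_-:=\{z:\rho^z[-1,0)=1\}.
\]
These sets are disjoint, measurable by Remark \ref{rem:kernel}, and $\mu(U_+\cup U_-)=1$. Using the definition \eqref{eq:genCBM} and the fact that $\{\forall\lambda:S_\lambda>0\}$ becomes a product event once the $t_\lambda$ are fixed, I can write
\[
\IP\big(S_\lambda>0\text{ for all }\lambda\big)~=~\int\prod_{\lambda=1}^{M}\int_{[-1,1]}P_{t_\lambda}(S_\lambda>0)\,\rho^{z}(\textup{d}t_\lambda)\;\mu(\textup{d}z),
\]
with an analogous identity for $\{\forall\lambda:S_\lambda<0\}$.

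Next, for $z\in U_+$ each inner integral runs effectively over $(0,1]$, and for every $t_\lambda>0$ the strong law of large numbers for the i.i.d.\ votes under $P_{t_\lambda}$ yields $S_\lambda/N_\lambda\to t_\lambda>0$ almost surely, so that $P_{t_\lambda}(S_\lambda>0)\to1$ as $N_\lambda\to\infty$. Since each factor is bounded by $1$, dominated convergence gives
\[
\prod_{\lambda=1}^{M}\int_{(0,1]}P_{t_\lambda}(S_\lambda>0)\,\rho^{z}(\textup{d}t_\lambda)~\longrightarrow~1\qquad\text{for every }z\in U_+,
\]
and symmetrically for the $\{S_\lambda<0\}$ product on $U_-$. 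A final application of dominated convergence in the outer $\mu$-integral produces
\[
\IP\big(\forall\lambda:S_\lambda>0\big)+\IP\big(\forall\lambda:S_\lambda<0\big)~\longrightarrow~\mu(U_+)+\mu(U_-)~=~1.
\]
Since the two events on the left are disjoint, their union equals the event appearing in the theorem, and the claim follows.

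No step is genuinely hard: the only technical care needed is to check $\mu$-measurability in $z$ of the inner integrals (provided by the stochastic kernel property in Remark \ref{rem:kernel}) and to justify the two interchanges of limit and integral (both handled by the uniform bound by $1$). In that sense, the main conceptual content of the theorem is already captured by Proposition \ref{prop:tight} combined with the elementary law of large numbers applied inside each group.
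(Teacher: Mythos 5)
Your proof is correct and rests on exactly the same ingredients as the paper's: the characterisation of tight correlation from Proposition \ref{prop:tight}, the decomposition of the $z$-space into $Z_+$ and $Z_-$, the law of large numbers under $P_t$, and dominated convergence. The only (immaterial) difference is that you show the unanimity event has probability tending to $1$ directly, whereas the paper shows the complementary events $\{S_\lambda=0\}$ and $\{S_\nu>0,\,S_\lambda<0\}$ each have probability tending to $0$ and uses a union bound over pairs.
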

Thus, in large tightly correlated systems, council votes are almost always unanimous! Consequently,
for $N\to\infty$,
any $w$ with $\sum_\lambda w_{\lambda}>0$ induces the same voting result in the council. This might be surprising at first. As the overall population goes to infinity, the probability of a unanimous council vote goes to 1. Hence, the limit of the optimality condition \eqref{eq:LES} is a linear equation system with an infinity of solutions. More precisely, any set of weights $w_1,\ldots,w_M$ that sum to a fixed positive value given by the limit of \eqref{eq:b} solves \eqref{eq:LES}. As such, the assignation of the voting weights only serves the purpose of appropriately scaling the magnitude of the (unanimous) council vote to bring it in line with $S/\sigma$. However, the constraint on the sum is not binding, as we know that any transformation of a weighted voting system that multiplies all weights by a positive constant while leaving the relative quota untouched is equivalent to the original voting system. Thus, a set of weights which sum to 1 is but a representative of an equivalence class of voting systems. The selection of the optimal weights when a unanimous council vote occurs with high probability is a trivial problem.
\begin{proof}
Set
\begin{align*}
Z_{+}~=~\{ z\in[-1,1]\mid  \rho^{z} (0,1]=1 \} \quad \text{and}\quad
Z_{-}~=~\{ z\in[-1,1]\mid  \rho^{z} [-1,0)=1\}\,.
\end{align*}
Since the measure $\IP $ is tightly correlated, we have due to Proposition \ref{prop:tight} that
\begin{align*}
   Z_{+}\;\cup\; Z_{-}~=[-1,1]\qquad\text{up to a set of $\mu $-measure $0$}\,.
\end{align*}
In particular, $\rho^{z}\not=\delta_{0} $ for $\mu $-almost all $z $, so $\IP(S_{\lambda}=0)\to 0 $
for any $\lambda $.

Thus, it suffices to prove that for any given $\nu\not=\lambda $
\begin{align*}
   \IP(S_{\nu}>0, S_{\lambda}<0)\to 0.
\end{align*}

For $t\in (0,1]$, we have
\begin{align*}
P_{t}(S_{\lambda}<0)~\to~ 0;
\end{align*}
 thus, for $z\in Z_{+}$,
\begin{align*}
\int\;P_{t}(S_{\lambda}<0)\;\rho^{z}(\textup{d}t)~\to~ 0
\end{align*}
and similarly, for $z\in Z_{-}$,
\begin{align*}
\int\;P_{t}(S_{\nu}>0)\;\rho^{z}(\textup{d}t)~\to~ 0.
\end{align*}
Hence
\begin{align*}
   \IP(S_{\nu}>0, S_{\lambda}<0)~\leq~&\int_{Z_{+}}\,\int P_{t}(S_{\lambda}<0)\;\rho^{z}(\textup{d}t)\;\mu(\textup{d}z)
~+~\int_{Z_{-}}\,\int P_{t}(S_{\nu}>0)\;\rho^{z}(\textup{d}t)\;\mu(\textup{d}z) \to~ 0.
\end{align*}
\end{proof}

\section{Specific Models}\label{sec:specmod}
In this section, we analyse some models from Example \ref{ex:1}. In these
examples, we can compute relevant quantities explicitly.
\subsection{Additive Models}\label{sec:additive}
We start with some additive models as in Example \ref{ex:1}.\ref{ex:add} with specific bias measures $ \mu$ and $\rho $.

We recall that for additive models the voting measure $\IP\big(\ux_{1},\ux_{2},\ldots,\ux_{M}\big)$ is given by
\begin{align}\label{eq:addcbm3}
\int  \left( \int P_{z+y_{1}}(\ux_{1})\,\rho(\textup{d}y_{1})
\cdots\int P_{z+y_{M}}(\ux_{M})
\,\rho(\textup{d}y_{M}) \right)\;\mu(\textup{d}z).
\end{align}
$\IP$ is indeed a voting measure if both $\mu$ and $\rho $ are symmetric, i.e.\! $\mu [a,b]=\mu [-b,-a] $ and similarly for $\rho $. $\IP$ is sufficiently random except for the (pathological) case $\mu=\frac{1}{2}\big(\delta_{1}+\delta_{-1}\big)$ and $\rho=\delta_{0} $. $\IP $ is tightly correlated if (and only if) for $\mu$-almost all $z$ either $\rho (-z,1]=1$ or $\rho [-1,-z)=1$.

\subsubsection{Uniform Distribution with Weak Global Bias}
In our first example, we take $\mu $ and $\rho $ to be the uniform probability distribution on $[-g,g] $ (for `global' bias) and
$[-\ell,\ell] $ (`local' bias), respectively. We assume first that $g\leq \ell$, indicating that the (average) global
bias is not bigger than the (average) local bias. So the voting measure $\IP(\underline{x}_{1},\ldots,\underline{x}_{M})$ is given by
\begin{align}\label{eq:addcbm}
   \frac{1}{2g}\int_{-g}^{+g}\;\left(\,\frac{1}{2\ell}\int_{z-\ell}^{z+\ell} P_{t_{1}}(\underline{x}_{1})\,\textup{d}t_{1}\cdots
\frac{1}{2\ell}\int_{z-\ell}^{z+\ell} P_{t_{M}}(\underline{x}_{M})\,\textup{d}t_{M}\,\right)\,\textup{d}z.
\end{align}
For this specific example, we can explicitly compute the relevant quantities from Definition \ref{def:notation} and Theorem \ref{thm:asweights}. By a straightforward but tedious computation, we obtain:
\begin{align}
   &a~=~\frac{1}{3}\,\frac{g^{2}}{\ell^{2}}~\leq~\frac{1}{3},&& \langle m_{1}(\rho)\,d(\rho)
\rangle~=~\frac{1}{3}\,\frac{g^{2}}{\ell},\notag\\
&\langle \om_{1}(\rho)
\rangle~=~\frac{1}{6\ell}\;\big(3\ell^{2}+\,g^{2}\big),
&&\langle \om_{1}(\rho)\rangle- \langle m_{1}(\rho)\,d(\rho)\rangle
~=~\frac{1}{6\ell}\big(3\ell^{2}-g^{2}\big),\notag\\
&\langle m_{1}(\rho)d(\rho)\rangle - a \langle \om_{1}(\rho)\rangle~=~ \frac{g^{2}}{18\ell^{3}}\big(3\ell^{2}-g^{2}\big). \label{eq:addcbm-1}
\end{align}
This gives
\begin{thm}\label{opt_weights_uniform}
   For the additive CBM in \eqref{eq:addcbm} with $g\leq\ell$, the optimal weights are
\begin{align}\label{eq:waddgkl}
   w_{\lambda}~=~\frac{1}{2}\ell\,\alpha_{\lambda}\;+\;\frac{1}{2}\,\frac{g^{2}\ell}{3\ell^{2}+(M-1)g^{2}}.
\end{align}
\end{thm}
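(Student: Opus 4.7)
The plan is to apply Theorem \ref{thm:asweights} directly after computing the input quantities for the specific measures. Since this is an additive CBM, conditionally on $Z=z$ the local bias $T_\lambda=z+Y_\lambda$ is uniform on $[z-\ell,z+\ell]$; that is, $\rho^z$ is the uniform distribution on the interval $[z-\ell,z+\ell]$. The crucial simplifying feature of the regime $g\leq\ell$ is that $|z|\leq g\leq\ell$, so $0\in[z-\ell,z+\ell]$ for every $z\in[-g,g]$ in the support of $\mu$. This lets us read off the four ingredients of Definition \ref{def:notation} with no case distinction on the sign of $z\pm\ell$.

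First I would compute $m_1(\rho)=z$ (the midpoint) and, splitting the integral at $0$, $\omega_1(\rho)=\frac{1}{2\ell}\bigl(\int_{z-\ell}^{0}(-t)\,\textup{d}t+\int_{0}^{z+\ell}t\,\textup{d}t\bigr)=\frac{\ell^2+z^2}{2\ell}$. The signed mass difference is
\begin{align*}
d(\rho)~=~\frac{z+\ell}{2\ell}-\frac{\ell-z}{2\ell}~=~\frac{z}{\ell}.
\end{align*}
Averaging against $\mu=\tfrac{1}{2g}\,\mathbf{1}_{[-g,g]}\,\textup{d}z$ uses only $\langle 1\rangle=1$ and $\langle z^2\rangle=g^2/3$, giving
\begin{align*}
a~=~\frac{g^2}{3\ell^2},\qquad \langle m_1(\rho)d(\rho)\rangle~=~\frac{g^2}{3\ell},\qquad \langle \omega_1(\rho)\rangle~=~\frac{3\ell^2+g^2}{6\ell},
\end{align*}
which are exactly the numbers listed in \eqref{eq:addcbm-1}. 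From these, the two combinations appearing in the numerators of $C_1$ and $C_2$ follow by one line of arithmetic each, yielding $\langle \omega_1(\rho)\rangle-\langle m_1(\rho)d(\rho)\rangle=(3\ell^2-g^2)/(6\ell)$ and $\langle m_1(\rho)d(\rho)\rangle-a\langle \omega_1(\rho)\rangle=g^2(3\ell^2-g^2)/(18\ell^3)$.

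Next I would check non-triviality: since $g\leq\ell$ we have $a=g^2/(3\ell^2)\leq 1/3<1$, so the model is not tightly correlated and Theorem \ref{thm:asweights} applies. Substituting into \eqref{eq:C1} I would exploit the cancellation $1-a=(3\ell^2-g^2)/(3\ell^2)$: the factor $3\ell^2-g^2$ cancels against the numerator, leaving $C_1=\ell/2$. Substituting into \eqref{eq:C2}, the same cancellation removes $3\ell^2-g^2$, and combining $1+(M-1)a=\bigl(3\ell^2+(M-1)g^2\bigr)/(3\ell^2)$ with the remaining factors gives $C_2=\frac{1}{2}\cdot\frac{g^2\ell}{3\ell^2+(M-1)g^2}$. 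Plugging $C_1$ and $C_2$ into $w_\lambda=C_1\alpha_\lambda+C_2$ from \eqref{eq:optw} produces exactly \eqref{eq:waddgkl}.

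There is no real obstacle here beyond careful bookkeeping; the only point where a mistake would be easy is the sign split in the computation of $\omega_1(\rho)$, which is why it is worth noting explicitly that $0$ lies inside $[z-\ell,z+\ell]$ uniformly in $z$ under the assumption $g\leq\ell$. The content of the theorem is really that the two explicit cancellations of the factor $3\ell^2-g^2$ produce the clean closed form \eqref{eq:waddgkl}.
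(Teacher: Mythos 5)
Your proposal is correct and follows exactly the route the paper takes: it computes the quantities of Definition \ref{def:notation} for $\rho^{z}=\mathcal{U}[z-\ell,z+\ell]$ and $\mu=\mathcal{U}[-g,g]$, reproduces the values listed in \eqref{eq:addcbm-1}, and substitutes them into \eqref{eq:C1}--\eqref{eq:C2}; the paper merely labels this a ``straightforward but tedious computation'' whereas you carry it out explicitly, including the useful observation that $g\leq\ell$ guarantees $0\in[z-\ell,z+\ell]$ so no case split is needed. All intermediate values and the final cancellations of $3\ell^{2}-g^{2}$ check out.
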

\begin{rem}
   \begin{enumerate}
      \item If there is no global bias (meaning $g\searrow 0$), we obtain the result for independent groups, i.e.$\!\,$ the weights are proportional to $\alpha_{\lambda}$.
\item The quantity $\langle m_{1}d \rangle-a\langle \om_{1} \rangle$ is non-negative. This is not always the case as we will see in Section \ref{sec:nonneg}.
   \end{enumerate}
\end{rem}
\subsubsection{Uniform Distribution with Strong Global Bias}
Now, we turn to the case $\ell\leq g$. In this case, we compute:
\begin{align}
   &a~=~1-\frac{2}{3}\,\frac{\ell}{g}~<~1,&& \langle m_{1}(\rho)\,d(\rho)
\rangle~=~\frac{1}{6g}\,\big(3g^{2}-\ell^{2}\big),\notag\\
&\langle \om_{1}(\rho)
\rangle~=~\frac{1}{6g}\;\big(3g^{2}+\,\ell^{2}\big),
&&\langle \om_{1}(\rho)\rangle- \langle m_{1}(\rho)\,d(\rho)\rangle
~=~\frac{1}{3}\frac{\ell^{2}}{g},\notag\\
&\langle m_{1}(\rho)d(\rho)\rangle - a \langle \om_{1}(\rho)\rangle~=~ \frac{\ell}{9g^{2}}\big(3g^{2}-3g\ell +\ell^{2}\big). \label{eq:addcbm-2}
\end{align}

\begin{thm}
   For the additive CBM in \eqref{eq:addcbm} with $\ell\leq g$, the optimal weights are
\begin{align}\label{eq:waddlkg}
   w_{\lambda}~=~\frac{1}{2}\ell\,\alpha_{\lambda}\;+\;\frac{1}{2}\,\frac{3 g^{2}-3g\ell +\ell^{2}}{3Mg -2(M-1)\ell}.
\end{align}
\end{thm}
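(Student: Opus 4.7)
The plan is to apply Theorem~\ref{thm:asweights}, which reduces the entire problem to evaluating the four averages $a$, $\langle \om_{1}(\rho)\rangle$, $\langle m_{1}(\rho)\,d(\rho)\rangle$ and then substituting into the formulas \eqref{eq:C1}--\eqref{eq:C2}. For the additive model in Example~\ref{ex:1}.\ref{ex:add} with $\mu$ uniform on $[-g,g]$ and $\rho$ uniform on $[-\ell,\ell]$, the stochastic kernel is $\rho^{z}=\text{Uniform}[z-\ell,z+\ell]$, so $m_{1}(\rho)=z$ is immediate and both symmetry of $\mu,\rho$ and sufficient randomness (Assumption~\ref{ass:sym} and Proposition~\ref{prop:suffrand}) are trivially in force.

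The substantive step is the description of $\om_{1}(\rho)$ and $d(\rho)$ as functions of $z$. Because $\ell\leq g$, the support $[z-\ell,z+\ell]$ of $\rho^{z}$ straddles $0$ only for $z\in[-\ell,\ell]$, while for $|z|\geq \ell$ it lies entirely on one side of the origin. I would therefore split the $\mu$-integration over $[-g,g]$ into the three pieces $[-g,-\ell]$, $[-\ell,\ell]$, $[\ell,g]$. On the two outer pieces one reads off $\om_{1}(\rho)=|z|$ and $d(\rho)=\pm 1$; on the middle piece a one-line calculation with the uniform density $1/(2\ell)$ gives $\om_{1}(\rho)=(z^{2}+\ell^{2})/(2\ell)$ and $d(\rho)=z/\ell$.

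Averaging each of $d(\rho)^{2}$, $\om_{1}(\rho)$, $m_{1}(\rho)\,d(\rho)$ against $\mu$ with this three-region decomposition reduces everything to elementary polynomial integrals in $z$. The outer regions each contribute a piece of the form $\int_{\ell}^{g}z^{k}\,\textup{d}z$, while the middle region contributes polynomial-in-$\ell$ corrections. The resulting values agree with \eqref{eq:addcbm-2}; in particular $a=1-2\ell/(3g)<1$, confirming that the model is not tightly correlated and that Theorem~\ref{thm:asweights} does apply. The two numerators in \eqref{eq:C1}--\eqref{eq:C2} then collapse to $\langle \om_{1}\rangle-\langle m_{1}d\rangle=\ell^{2}/(3g)$ and $\langle m_{1}d\rangle-a\langle \om_{1}\rangle=\ell(3g^{2}-3g\ell+\ell^{2})/(9g^{2})$.

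Finally, inserting $1-a=2\ell/(3g)$ and $1+(M-1)a=(3Mg-2(M-1)\ell)/(3g)$ into \eqref{eq:C1}--\eqref{eq:C2} produces $C_{1}=\ell/2$ and $C_{2}=(3g^{2}-3g\ell+\ell^{2})/\bigl(2(3Mg-2(M-1)\ell)\bigr)$, i.e.\! the claimed formula \eqref{eq:waddlkg}. The only genuine obstacle is the bookkeeping: one must track the piecewise structure of $\om_{1}(\rho)$ and $d(\rho)$ carefully, and in forming $\langle m_{1}d\rangle-a\langle \om_{1}\rangle$ the leading $O(g)$ terms must cancel to expose the correct order in $\ell$. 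Beyond that, the argument is routine.
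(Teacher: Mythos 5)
Your proposal is correct and follows essentially the same route as the paper: the paper likewise obtains the theorem by computing $a$, $\langle \om_{1}(\rho)\rangle$, and $\langle m_{1}(\rho)\,d(\rho)\rangle$ for the uniform additive kernel (the quantities listed in \eqref{eq:addcbm-2}) and then substituting into Theorem \ref{thm:asweights}. Your three-region decomposition of the $\mu$-integral and the resulting piecewise formulas $d(\rho)=\pm 1$, $\om_{1}(\rho)=|z|$ for $|z|\geq\ell$ and $d(\rho)=z/\ell$, $\om_{1}(\rho)=(z^{2}+\ell^{2})/(2\ell)$ for $|z|<\ell$ reproduce exactly the values in \eqref{eq:addcbm-2}, and the final substitution into \eqref{eq:C1}--\eqref{eq:C2} checks out.
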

\begin{rem}
   \begin{enumerate}
      \item For the case $g=\ell$, formulae \eqref{eq:waddgkl} and \eqref{eq:waddlkg} agree.
\item In the limit $\ell\to 0 $ we find $a\to 1 $, i.e.$\!\,$ we approach the tightly correlated case. In this case, the weights become constant, independent of the sizes of the groups.
This limit case corresponds to Impartial Anonymous Culture for the \emph{union}.
   \end{enumerate}
\end{rem}

\subsubsection{Global Bias Concentrated in Two Points}
We study an additive model for which the global bias may assume the value $+g,-g$ with probability $\frac{1}{2}$ each. The local bias is uniformly distributed on $[-\ell,\ell] $ with $0<g<\ell$.

We just give the final result: the optimal weights are
\begin{align*}
    w_{\lambda}~=~\frac{1}{2}\ell\,\alpha_{\lambda}\;+\;\frac{1}{2}\,\frac{\ell g^{2}}{(M-1)g^{2} +\ell^{2}}.
\end{align*}
In this setting, the tightly correlated case is approached in the limit $\ell\searrow g $.
The weights tend to $w_{\lambda}\to \frac{1}{2}g\,\alpha_{\lambda}+ \frac{1}{2}\frac{g}{M}$ as $\ell\searrow g $.

\subsection{Multiplicative Models}\label{sec:mult}
We now analyse the multiplicative models in Example \ref{ex:1}.\ref{ex:mult}. If $\mu\{0\}=0$, the model is tightly correlated if and only if $\supp\,\rho\subset(0,1]$ or $\supp\,\rho\subset[-1,0)$.

Again, we consider uniform distributions on $[\ell_1,\ell_2]$ and $[-g,g]$, respectively, in more detail. The probability of each configuration $\ux_{\lambda}\in\{ -1,1 \}^{N_{\lambda}},\lambda=1,\ldots,M$, is
\begin{align}\label{eq:multcbm}
   \frac{1}{2g}\int_{-g}^{+g}\;
\left(\,\frac{1}{\ell_{2}-\ell_{1}}\int_{\ell_{1}}^{\ell_{2}} P_{z y_{1}}(\underline{x}_{1})\,\textup{d}y_{1}\cdots
\frac{1}{\ell_{2}-\ell_{1}}\int_{\ell_{1}}^{\ell_{2}} P_{z y_{M}}(\underline{x}_{M})\,\textup{d}y_{M}\,\right)\,\textup{d}z\,.
\end{align}
So while the global bias measure is uniform on the interval $[-g,g]$, the local bias modifier $\rho$ is uniform on the interval $[\ell_1,\ell_2]$. If $\ell_{1}\geq 0$ (or $\ell_{2}\leq 0 $, which gives the same model class), the model is tightly correlated.

Assuming $\ell_{1}<0<\ell_{2} $, we obtain
\begin{align*}
   &a~=~\frac{(\ell_{2}+\ell_{1})^{2}}{(\ell_{2}-\ell_{1})^{2}}~<~1, & \langle m_{1}(\rho)\,d(\rho)
\rangle~=~\frac{g}{4}\,\frac{(\ell_{2}+\ell_{1})^{2}}{(\ell_{2}-\ell_{1})},\notag\\
&\langle \om_{1}(\rho)
\rangle~=~\frac{g}{4}\,\frac{\ell_{2}^{2}+\ell_{1}^{2}}{(\ell_{2}-\ell_{1})},
&\langle \om_{1}(\rho)\rangle- \langle m_{1}(\rho)\,d(\rho)\rangle
~=~-\,\frac{g}{2}\,\frac{\ell_{1}\ell_{2}}{\ell_{2}-\ell_{1}},\notag\\
&\langle m_{1}(\rho)d(\rho)\rangle - a \langle \om_{1}(\rho)\rangle~=~
-\,\frac{g}{2}\,\ell_{1}\ell_{2}\,\frac{(\ell_{2}+\ell_{1})^{2}}{(\ell_{2}-\ell_{1})^{3}}~\geq~0.
\end{align*}
So, for the optimal weights according to Theorem \ref{thm:asweights}, we obtain
\begin{align}
   w_{\lambda}~&=~\frac{g}{8}\,(\ell_{2}-\ell_{1})\,\alpha_{\lambda}\;
+\;\frac{g}{8}\,(\ell_{2}-\ell_{1})\,\frac{(\ell_{2}+\ell_{1})^{2}}{(\ell_{2}-\ell_{1})^{2}+
(M-1)(\ell_{2}+\ell_{1})^{2}}, \label{eq:mult1}\\
\intertext{or, equivalently,}\tilde{w}_{\lambda}~&=~\alpha_{\lambda}\;
+\;\frac{(\ell_{2}+\ell_{1})^{2}}{(\ell_{2}-\ell_{1})^{2}+
(M-1)(\ell_{2}+\ell_{1})^{2}}\,.\label{eq:mult2}
\end{align}
For $\ell_{1}\nearrow 0$ approaching the tightly correlated case, we get
\begin{align*}
   \tilde{w}_{\lambda}~&=~\alpha_{\lambda}\;
+\;\frac{1}{M}\,.
\end{align*}
Moreover, we observe that the formulae \eqref{eq:mult1} and \eqref{eq:mult2} make sense even in the
tightly correlated case, i.e.$\!\,$ for $\ell_{1}\geq 0$.

Next we turn to the case where $\rho(0,1]=1$ while maintaining the condition \eqref{eq:rhotriv}.  Then there are only two possibilities: either the model is tightly correlated and the optimal weights are indeterminate. This is the case if and only if $\mu\{0\}=0$. The complementary case is $0<\mu\{0\}<1$. We can interpret this as the existence of some fraction of the issues which are not subject to any global bias. The multiplicative structure of the local bias means all voters make up their own minds on these issues. We can determine the optimal weights in this case without placing any additional assumptions on the bias measures $\mu$ and $\rho$.

The key observation is that for $\mu$-almost all $z$ the equality $m_{1}(\rho)\,d(\rho) = \om_{1}(\rho)$ holds. The model is not tightly correlated, nor are the voters belonging to different groups independent. So we have $0<a<1$ and
\begin{align*}
   w_{\lambda}~&=~\frac{\langle \om_{1}(\rho) \rangle}{1+(M-1)\,a}\quad
\text{or, equivalently,}\quad\overline{w}_{\lambda}~=~\frac{1}{M}\,.
\end{align*}
In conclusion, for this model, the optimal weights have to be chosen equal for all groups $\lambda$, no matter their size $\alpha_\lambda$. As for the intuition behind this result, let us recall the formula for optimal weights in non-tightly correlated models given in Theorem \ref{thm:asweights}: the optimal weights are given by $C_1 \alpha_\lambda+C_2$, with
\begin{align*}
   C_{1}~&=~\frac{1}{1-a}\,\Big(\langle \om_{1}(\rho) \rangle-\langle m_{1}(\rho)\, d(\rho) \rangle\Big)\\
\text{and}\quad C_{2}~&=~\frac{1}{1-a}\,\frac{\langle m_{1}(\rho)\,d(\rho) \rangle-a\langle \om_{1}(\rho)\rangle}{1+(M-1)\,a}.
\end{align*}
As mentioned after Definition \ref{def:notation}, $\om_{1}(\rho)$ can be interpreted as a measure of intra-group cohesion, and $m_{1}(\rho)\,d(\rho)$ as a measure of inter-group cohesion. The equality of these two in the present example is intuitively due to the fact that $Z=0$ induces zero cohesion both within each group as well as across group boundaries, and for $Z\neq0$ we have a very strong correlation of all voters due to the assumption $\rho(0,1]=1$ which implies that the sign of each group bias will always be the same as the sign of the global bias. Hence, we have $\langle m_{1}(\rho)\,d(\rho) \rangle = \langle \om_{1}(\rho) \rangle$, and $C_1=0$ implies there is no proportional component to the optimal weights. The constant component $C_2$, on the other hand, does not disappear, because the fraction of issues for which there is independent voting (i.e.\! those for which $Z=0$), induces a non-tight correlation and $a<1$.
This example sheds some light on where the summands in the optimal weight formula in Theorem \ref{thm:asweights} come from: as mentioned previously, the constant component $C_2$ is induced by the correlation between votes belonging to different groups. Now we see that the proportional component is a manifestation of the stronger cohesion within each group when compared to inter-group cohesion.

\section{Non-Negativity of the Weights}\label{sec:nonneg}
In applications on public voting procedures, negative weights would be rather absurd: the consent of
such a voter could decrease the majority margin or even change an `aye' to a `nay'. It seems likely that no group would accept being assigned a negative voting weight. Even if they did, this would not bring about a minimisation of the democracy deficit, since a group with negative weight would face incentives to misrepresent their true preferences. On the other hand, in an
estimation problem, i.e.$\!\,$ for estimating the magnitude of the voting margin, negative weights may make sense. It has been pointed out by an anonymous referee that negative weights may also make sense in an automated preference aggregation setting, when sincere voting can be assumed.

In Theorem \ref{thm:asweights}, we identified the optimal weights $w_{\lambda} $ as
\begin{align}\label{eq:weights2}
   w_{\lambda}~=~C_{1}\,\alpha_{\lambda}\;+\;C_{2}\,.
\end{align}
The constant $C_{1} $ is always non-negative. Moreover, in all explicit examples in Section \ref{sec:specmod} the constant $C_{2}$ turned out to be non-negative as well.

In general, the constant $C_{2}$ is non-negative if and only if
 \begin{align}\label{eq:nonneg}
    a\langle \om_1(\rho) \rangle \leq \langle m_{1}(\rho) \, d(\rho) \rangle \,.
 \end{align}
As it turns out, condition \eqref{eq:nonneg} \emph{can} be violated under certain assumptions on
the measures $\mu $ and $\rho^{z} $. Consequently, for small $\alpha_{\lambda} $, equation \eqref{eq:weights2} prescribes negative weights.

\subsection{An Example with Negative Optimal Weights}
To see that \eqref{eq:nonneg} can be violated, we consider an additive model with $\mu=\eh(\delta_{g}+\delta_{-g})$ and $\rho=\frac{1}{4}(\delta_{-\ell_{2}}+\delta_{-\ell_{1}}+\delta_{\ell_{1}}+\delta_{\ell_{2}}) $ and choose $0<\ell_{1}<g<\ell_{2} $ with $g+\ell_{2}\leq 1$.

Then, for the additive model with $\mu $ and $\rho $ we compute:
\begin{align*}
   a=\langle d(\rho)^{2} \rangle=\frac{1}{4},&&\langle m_{1}d(\rho) \rangle=\eh g,
   &&\langle \om_{1}(\rho)\rangle=\eh g + \eh \ell_{2}.
&\end{align*}

Consequently, the constant term $C_{2}$ in the optimal weight \eqref{eq:weights2} is \emph{negative} if $\ell_{2}>3 g$. In this case, the optimal weight is negative
for small $\alpha_{\lambda}$.

An analogous result holds for uniform distributions both for $\mu $ (around $\pm g$) and for $\rho $
around $\pm\ell_{1}$ and $\pm\ell_{2} $, as long as these six intervals are small enough.

In the remainder of this section, we will focus on the additive model and the problem of negative weights. For simplicity's sake, we will assume for the rest of Section \ref{sec:nonneg} that the support of both $\mu$ and $\rho$ belongs to $[-1/2,1/2]$.

\subsection{Non-Negativity of $w$ in Additive Collective Bias Models with $\mu=\rho$}

In this section, we consider the case where the central bias and the
group modifiers of an additive CBM follow the same distribution. Of course, all bias variables and modifiers
$Z$ and the $Y_{\lambda}$ are still assumed to be independent. So the random variables $Z,Y_1,\ldots,Y_M$ are all i.i.d. As noted in Section \ref{warm-up}, the assumption of identically distributed $Z$ and $Y_\lambda$ reflects that global bias and local bias each have the same influence on the voters, with neither of the two dominating. We will use the notation
\[
r:=\langle m_{1}(\rho)\,d(\rho) \rangle,\quad m:=\langle \om_{1}(\rho) \rangle.
\]
Recall that according to Theorem \ref{thm:asweights}, the optimal weights are proportional to
\[
w_{\lambda}=r-am+\left(1+\left(M-1\right)a\right)\left(m-r\right)\alpha_{\lambda}.
\]

We prove that for this setup the optimal weights can never be negative.
\begin{thm}
\label{thm:mu_rho}If $\mu=\rho$ in an additive CBM, the constant term in
the optimal weights $r-am$ is non-negative and $r-am=0$ holds if
and only if $\mu=\delta_{0}$. Furthermore, $0\leq a\leq1/3$, where
$a=0$ holds if and only if $\mu=\delta_{0}$, and $a=1/3$ if and
only if $\mu$ has no atoms, i.e., for all $x\in\mathbb{R}$, $\mu\left\{ x\right\} =0$.
\end{thm}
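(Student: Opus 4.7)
The plan is to exploit the assumption $\mu=\rho$, which makes $Z,Y_{1},\ldots,Y_{M}$ i.i.d.\! with common symmetric distribution $\mu$, and to use exchangeability to link the three quantities $m$, $r$, and $a$. First I identify the integrands from Definition \ref{def:notation}. Since $\rho^{z}$ is the law of $z+Y$ for $Y\sim\mu$ symmetric, $m_{1}(\rho)(z)=z$ and $\overline{m}_{1}(\rho)(z)=E|z+Y|$; writing $F$ for the CDF of $\mu$, symmetry yields $d(\rho)(z)=P(Y>-z)-P(Y<-z)=F(z)+F(z-)-1$. In particular $m=\langle\overline{m}_{1}(\rho)\rangle=E|Z+Y|$ with $Z,Y$ i.i.d.\! $\mu$.

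The key structural insight is the identity $r=m/2$. Using the convention $\textup{sgn}(0)=0$, we have $d(z)=E_{Y}[\textup{sgn}(z+Y)]$, so
\begin{align*}
r~=~\langle m_{1}(\rho)\,d(\rho)\rangle~=~E[Z\cdot\textup{sgn}(Z+Y)].
\end{align*}
Since $(Z,Y)$ is exchangeable and $\textup{sgn}(Z+Y)$ is symmetric in its two arguments, the same expectation equals $E[Y\cdot\textup{sgn}(Z+Y)]$. Adding, $2r=E[(Z+Y)\,\textup{sgn}(Z+Y)]=E|Z+Y|=m$.

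The main technical step is the computation of $a$. Write $J(z):=F(z)-F(z-)=\mu\{z\}$, so $d(Z)=2F(Z-)+J(Z)-1$. Introduce an independent $W\sim U[0,1]$ and set $V:=F(Z-)+W\,J(Z)$; the standard randomised-CDF construction makes $V\sim U[0,1]$. Hence
\begin{align*}
d(Z)~=~(2V-1)+J(Z)(1-2W).
\end{align*}
Squaring, taking expectations, conditioning on $Z$ to handle the $W$-integrals, and using $E[(2V-1)^{2}]=1/3$, $E[1-2W]=0$, $E[W(1-2W)]=-1/6$, $E[(1-2W)^{2}]=1/3$, yields after short bookkeeping
\begin{align*}
a~=~E[d(Z)^{2}]~=~\tfrac{1}{3}-\tfrac{1}{3}\,E[J(Z)^{2}].
\end{align*}

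The conclusions follow immediately. Since $0\leq J(Z)\leq 1$, we have $0\leq a\leq 1/3$: equality $a=1/3$ holds iff $J(Z)=0$ a.s.\! iff $\mu$ is atomless, and $a=0$ iff $J(Z)=1$ a.s., which forces $\mu$ to be a Dirac mass and, by symmetry, $\mu=\delta_{0}$. For the constant term in the optimal weights, $r-am=m(\tfrac{1}{2}-a)\geq m/6\geq 0$, with equality iff $m=E|Z+Y|=0$, iff $Z+Y=0$ a.s.; since $Z,Y$ are i.i.d., this forces both to be a.s.\! constant, and symmetry of $\mu$ pins down $\mu=\delta_{0}$. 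The only nontrivial obstacle is the short randomised-CDF computation of $a$; once $r=m/2$ is observed, non-negativity of the constant term is automatic.
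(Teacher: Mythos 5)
Your proof is correct, and for the hard part of the theorem it takes a genuinely different and considerably shorter route than the paper. The identity $r=m/2$ via exchangeability of $(Z,Y)$ is essentially the paper's own observation that $m=r+s$ with $r=s$ when $\mu=\rho$, so there the two arguments coincide. The real divergence is in proving $0\leq a\leq 1/3$ together with the two equality characterisations: the paper first derives $a=2\int_{(0,1/2]}(\mu(-z,z])^{2}\,\mu(\textup{d}z)$, then runs an induction over discrete measures ordered by support size, a separate Riemann-sum argument for atomless measures, and finally an atom-removal approximation to handle general $\mu$ --- several pages of estimates. Your distributional-transform computation replaces all of that with the exact closed form $a=\tfrac{1}{3}\bigl(1-\sum_{x}\mu\{x\}^{3}\bigr)$, from which the bound and both equality cases ($a=0$ iff $\mu$ is a point mass, hence $\delta_{0}$ by symmetry; $a=1/3$ iff $\mu$ is atomless) are immediate; the fact that $V=F(Z-)+W\,\mu\{Z\}$ is uniform is indeed standard and your bookkeeping of the $W$-moments checks out. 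As a bonus, your formula is strictly more informative than the paper's partial results: maximising $a$ over symmetric discrete measures with at most $n$ atoms amounts to minimising $\sum_i p_i^{3}$, giving $a\leq\frac{(n-1)(n+1)}{3n^{2}}$ for \emph{every} $n$, attained by the uniform distribution --- which, incidentally, shows the even-$n$ bound $\frac{(n-2)(n+2)}{3n^{2}}$ stated in Proposition \ref{prop:mu_rho_discr} cannot be right (for $\mu=\tfrac{1}{2}(\delta_{-x}+\delta_{x})$ one computes $a=1/4$, not $0$). The concluding step $r-am=m(\tfrac12-a)\geq m/6$ and the identification of the equality case with $\mu=\delta_{0}$ are also sound.
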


This theorem says -- among other things -- that the constant term
in the optimal weights $r-am$ is 0 if and only if $\mu=\rho=\delta_{0}$.
But the latter equality implies that all voters are independent, a
case which we discarded earlier. (Note that if $\mu=\rho=\delta_{0}$,
the optimal weights are \emph{not} proportional to the group sizes.
Instead, the square root law holds and the optimal weights are proportional
to $\sqrt{\alpha_{\lambda}}$.) Hence, by Theorem \ref{thm:mu_rho},
for all $\mu=\rho\neq\delta_{0}$, the optimal weights are the sum
of a positive constant $r-am>0$ and a term proportional to the group
size $\alpha_{\kappa}$.

Under the assumption $\mu=\rho$, we consider $a=\mathbb{E}\left(\chi_{1}\chi_{2}\right)$
as a function of the measure $\mu$. So $a:\mathcal{M}_{\leq1}\left(\left[-1/2,1/2\right]\right)\rightarrow\mathbb{R}_{+}$,
where $\mathcal{M}_{\leq1}\left(\left[-1/2,1/2\right]\right)$ is
the set of all sub-probability measures on $\left[-1/2,1/2\right]$.
We will also write $\mathcal{M}_{1}\left(\left[-1/2,1/2\right]\right)$
for the set of all probability measures. Similarly, $r$ is a function
$r:\mathcal{M}_{\leq1}\left(\left[-1/2,1/2\right]\right)\rightarrow\mathbb{R}_{+}$.
To show the theorem, we consider the cases of discrete and continuous
measures separately first and then show the general case.
\begin{prop}
\label{prop:mu_rho_discr}If $\mu$ is discrete, then we have $0\leq a\left(\mu\right)<1/3$.
The supremum over all discrete measures of $a\left(\mu\right)$ is
$1/3$. Within the class of discrete measures with at most $n$ points
belonging to $\textup{supp }\mu$, we have
\[
a\left(\mu\right)\leq\begin{cases}
\frac{\left(n-2\right)\left(n+2\right)}{3n^{2}}, & n\textup{ even},\\
\frac{\left(n-1\right)\left(n+1\right)}{3n^{2}}, & n\textup{ odd}.
\end{cases}
\]
\end{prop}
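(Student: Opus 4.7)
My strategy is to derive an explicit closed-form expression for $a(\mu)$ in terms of the atom masses of $\mu$, from which every assertion of the proposition falls out. Specifically, I claim that for a symmetric discrete $\mu$ with atoms of masses $p_{i}$,
\begin{align*}
   a(\mu)~=~\frac{1-\sum_{i}p_{i}^{3}}{3}.
\end{align*}

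The first step exploits the symmetry of $\mu=\rho$. In the additive CBM $\rho^{z}$ is the translate of $\rho$ by $z$, and the identity $\mu(-A)=\mu(A)$ together with a short computation shows that for $Y\sim\mu$,
\[
   d(\rho^{z})~=~\rho^{z}(0,1]-\rho^{z}[-1,0)~=~2F_{\textup{mid}}(z)-1,\qquad F_{\textup{mid}}(z):=\IP(Y<z)+\tfrac{1}{2}\IP(Y=z).
\]
Plugging this into $a(\mu)=\langle d(\rho)^{2}\rangle$ and using $\IE[F_{\textup{mid}}(Z)]=1/2$ (another consequence of symmetry), one obtains $a(\mu)=4\,\textup{Var}(F_{\textup{mid}}(Z))$ with $Z\sim\mu$.

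The central step is then to compute $\IE[F_{\textup{mid}}(Z)^{2}]$. The representation $F_{\textup{mid}}(z)=\IE[\phi(z,Y')]$ with $\phi(a,b):=\mathbf{1}_{a>b}+\tfrac{1}{2}\mathbf{1}_{a=b}$ and $Y'$ an independent copy of $Y$ gives $\IE[F_{\textup{mid}}(Z)^{2}]=\IE[\phi(Z,Y_{1})\phi(Z,Y_{2})]$ for i.i.d.\ $Z,Y_{1},Y_{2}\sim\mu$. I would evaluate the right-hand side by conditioning on the tie pattern of the triple $(Z,Y_{1},Y_{2})$. Writing $p:=\sum p_{i}^{2}$ and $q:=\sum p_{i}^{3}$, the events ``all distinct'', ``exactly one pair equal'' (three symmetric sub-cases), and ``all equal'' occur with probabilities $1-3p+2q$, $3(p-q)$, and $q$ respectively. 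Exchangeability makes $Z$ the maximum with probability $1/3$ on the all-distinct event, giving conditional expectation $1/3$ for $\phi\phi$ there; the three partial-tie sub-cases collapse to explicit constants through further symmetry arguments, with the $1/2$-weights at ties entering carefully. Elementary arithmetic then yields $\IE[\phi(Z,Y_{1})\phi(Z,Y_{2})]=\tfrac{1}{3}-\tfrac{q}{12}$, from which the announced identity for $a(\mu)$ follows.

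With the identity at hand, the proposition follows quickly. Non-negativity is $q\leq 1$, which holds since $p_{i}\leq 1$ forces $p_{i}^{3}\leq p_{i}$; equality collapses $\mu$ to a Dirac mass, and symmetry pins it to $\delta_{0}$. The bound $a\leq 1/3$ reduces to $q\geq 0$, with equality iff $\mu$ is atomless. For symmetric discrete $\mu$ with at most $n$ atoms, the quantitative bounds reduce to lower bounds on $q=\sum p_{i}^{3}$ on the probability simplex under the symmetry constraint, which are handled by the power-mean inequality (or a direct Lagrange-multiplier computation) with extremiser the uniform distribution on $n$ symmetric atoms. The main technical obstacle is the careful bookkeeping of the partial-tie contributions: one has to check that the $1/2$-factors from $\phi$ combine with the symmetry of the joint distribution exactly so as to leave a dependence on $\mu$ only through $\sum p_{i}^{3}$.
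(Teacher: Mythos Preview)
Your approach is correct and takes a genuinely different route from the paper. The paper argues by induction on the number of atom pairs: at each step it adjoins a new pair $\pm x_{k+1}$ with mass $\eta$, expresses $a$ as a function of $\eta$ and the previous optimum, and solves a quadratic to locate the maximiser, showing it is again uniform. You instead go straight for the closed form $a(\mu)=(1-\sum_i p_i^3)/3$. This is both shorter and more informative: it dispenses with the induction, identifies the extremiser by a single convexity argument, and as a by-product covers the atomless case of Proposition~\ref{prop:mu_rho_cont} as well, since $\sum_i p_i^3=0$ precisely when $\mu$ has no atoms. The delicate bookkeeping you flag does work out: writing $A:=\sum_i p_i^2 F_i$ with $F_i=P(Y<x_i)$, the two sub-cases $Z=Y_1\neq Y_2$ and $Z=Y_2\neq Y_1$ each contribute $\tfrac12 A$, the sub-case $Y_1=Y_2\neq Z$ contributes $P(Z>Y_1=Y_2)=p-A-q$, and the $A$'s cancel, leaving dependence on $\mu$ only through $q=\sum_i p_i^3$.

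One point you should be aware of. Your method gives the sharp bound $a(\mu)\le (n^2-1)/(3n^2)$ for \emph{every} $n$, attained at the uniform distribution on $n$ symmetrically placed atoms. This agrees with the stated bound for odd $n$ but is strictly larger than the stated even-$n$ bound $(n^2-4)/(3n^2)$. The discrepancy is not a gap in your argument: the even-$n$ bound in the proposition appears to be in error, as already $\mu=\tfrac12(\delta_{-x}+\delta_x)$ gives $a=1/4$, violating the claimed bound $0$ for $n=2$. So your proof establishes the corrected version of the statement rather than the literal one, and you should flag this when writing it up.
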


For measures $\mu$ with no atoms, we have
\begin{prop}
\label{prop:mu_rho_cont}If $\mu\in\mathcal{M}_{\leq1}\left(\left[-1/2,1/2\right]\right)$
has no atoms, then $a\left(\mu\right)\leq1/3$. If $\mu\in\mathcal{M}_{1}\left(\left[-1/2,1/2\right]\right)$,
then $a\left(\mu\right)=1/3$.
\end{prop}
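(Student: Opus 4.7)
\textbf{Proof plan for Proposition \ref{prop:mu_rho_cont}.}

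The plan is to rewrite $a(\mu)$ purely in terms of the cumulative distribution function $F(z):=\mu([-\tfrac12,z])$ of $\mu$ and then apply the probability integral transform. Since we are in an additive CBM with $\mu=\rho$, the translation formula $\rho^z[a,b]=\rho[a-z,b-z]$ gives
\begin{align*}
d(\rho)(z)~=~\rho^{z}(0,1]-\rho^{z}[-1,0)~=~\rho\big((-z,\tfrac12]\big)-\rho\big([-\tfrac12,-z)\big).
\end{align*}
Writing $c:=\mu([-\tfrac12,\tfrac12])$, the no-atom hypothesis $\mu\{-z\}=0$ forces these two terms to partition $[-\tfrac12,\tfrac12]$, so $d(\rho)(z)=c-2F(-z)$. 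The symmetry of $\mu$ combined with $\mu\{-z\}=0$ yields $F(-z)=c-F(z)$. Substituting produces the key identity
\begin{align*}
d(\rho)(z)~=~2F(z)-c,\qquad\text{hence}\qquad a(\mu)~=~\int_{-1/2}^{1/2}\big(2F(z)-c\big)^{2}\,\mu(\textup{d}z).
\end{align*}

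Next I handle the probability case $c=1$. Since $\mu$ has no atoms, $F$ is continuous, and the standard probability integral transform implies that $F(Z)\sim\mathrm{Uniform}[0,1]$ whenever $Z\sim\mu$. A direct one-line integral then gives
\begin{align*}
a(\mu)~=~\mathbb{E}\big[(2F(Z)-1)^{2}\big]~=~\int_{0}^{1}(2u-1)^{2}\,\textup{d}u~=~\frac{1}{3}.
\end{align*}
This establishes the second assertion of the proposition.

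For a sub-probability measure $\mu$ of total mass $c\in(0,1]$, I would reduce to the probability case by normalising: set $\tilde\mu:=\mu/c$, which is a probability measure with CDF $\tilde F=F/c$, still continuous. Then
\begin{align*}
a(\mu)~=~c\int\big(2c\tilde F(z)-c\big)^{2}\,\tilde\mu(\textup{d}z)~=~c^{3}\int\big(2\tilde F(z)-1\big)^{2}\,\tilde\mu(\textup{d}z)~=~\frac{c^{3}}{3}~\leq~\frac{1}{3},
\end{align*}
with equality if and only if $c=1$. The case $c=0$ is trivial.

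The only delicate point is the derivation of the identity $d(\rho)(z)=2F(z)-c$: one must combine the no-atom hypothesis (to pass freely between closed, open and half-open intervals, and to deduce $\mu\{-z\}=0$) with the symmetry of $\mu$ (to obtain $F(-z)=c-F(z)$). Once this identity is in hand, the probability integral transform does all of the remaining work, and no further hypothesis on the regularity of $F$ (e.g.~existence of a density, absence of flat regions) is needed — continuity of $F$ is enough.
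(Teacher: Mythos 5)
Your argument is correct, and it reaches the conclusion by a genuinely different route from the paper. Both proofs rest on the same starting point, namely the representation $a(\mu)=\int d(\rho)(z)^{2}\,\mu(\textup{d}z)$ with $d(\rho)(z)=2F(z)-c$ (equivalently, the formula $a(\mu)=2\int_{(0,1/2]}\big(\mu(-z,z]\big)^{2}\mu(\textup{d}z)$ of Lemma \ref{lem:mu_rho_a_r}), and your derivation of that identity from the translation structure of the additive kernel, the symmetry of $\mu$, and the absence of atoms is sound. From there the paper proceeds by hand: it partitions $(0,1/2)$ into intervals of small $\mu$-measure, sandwiches $a(\mu)/2$ between upper and lower sums, and identifies the upper sum with an upper Riemann sum of $x\mapsto 4x^{2}$ via a length-for-mass correspondence of partitions \textendash{} which is in effect an $\varepsilon$-by-$\varepsilon$ reconstruction of the change of variables $u=F(z)$. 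You instead invoke the probability integral transform directly: continuity of $F$ gives $F(Z)\sim\mathcal{U}[0,1]$ (and, as you correctly note, flat regions of $F$ cause no trouble since $\mu\{F=u\}=0$ for each $u$), whence $a(\mu)=\mathbb{E}\big[(2F(Z)-1)^{2}\big]=1/3$ in one line. Your treatment of the sub-probability case by normalising to $\tilde\mu=\mu/c$ is also cleaner than what the paper offers and yields the sharper exact value $a(\mu)=c^{3}/3$ rather than merely the bound $a(\mu)\leq 1/3$; this scaling identity can equally be read off the integral formula of Lemma \ref{lem:mu_rho_a_r} without reference to the CBM interpretation, which is the right way to view it since a sub-probability $\mu$ does not itself define a voting measure. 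In short: same key identity, but your evaluation of the integral replaces the paper's Riemann-sum approximation with a standard theorem, at the cost of no generality.
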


For the remainder of this article, we express $m$ as the sum of two
terms:
\begin{align*}
m & =E\left|T_{1}\right|=E\left|Z+Y\right|=E\left(\,\textup{sgn\ensuremath{\left(Z+Y\right)}}\ensuremath{\cdot}\left(Z+Y\right)\right)=E\left(\,Z\,\textup{sgn\ensuremath{\left(Z+Y\right)}}\right)+E\left(\,Y\,\textup{sgn\ensuremath{\left(Z+Y\right)}}\right).
\end{align*}
The first of these summands equals $r$. The second one, we will call
$s$ from now on. If $\mu=\rho$, then of course $r=s$, and the term
$r-am$ equals $r\left(1-2a\right)$. For the proof of
these results, we need the following auxiliary lemma:
\begin{lem}
\label{lem:mu_rho_a_r}We can express the magnitudes $a\left(\mu\right)$
and $r\left(\mu\right)$ as
\begin{align*}
a\left(\mu\right) & =2\int_{\left(0,1/2\right]}\left(\mu\left(-z,z\right]\right)^{2}\mu\left(\textup{d}z\right),\quad r\left(\mu\right)=2\int_{\left(0,1/2\right]}z\mu\left(-z,z\right]\mu\left(\textup{d}z\right).
\end{align*}
\end{lem}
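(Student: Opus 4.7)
The plan is to exploit the additive structure together with the hypothesis $\mu=\rho$ to write $d(\rho^{z})$ and $m_{1}(\rho^{z})$ as explicit functions of $z$ and $\mu$, and then to collapse the resulting integrals over $[-1/2,1/2]$ to integrals over $(0,1/2]$ using the symmetry of $\mu$. Since $T_{\lambda}\mid Z=z$ is distributed as $Y+z$ with $Y\sim\mu$, we have $\rho^{z}(A)=\mu(A-z)$ for every Borel set $A$. The vanishing first moment of the symmetric measure $\mu$ gives $m_{1}(\rho^{z})=\int(y+z)\,\mu(\textup{d}y)=z$ immediately. For $d(\rho^{z})$, writing it as a difference of tail probabilities yields
\begin{align*}
d(\rho^{z})~=~\IP(Y>-z)-\IP(Y<-z)~=~\mu(-z,\infty)-\mu(-\infty,-z)~=~\mu(-z,\infty)-\mu(z,\infty),
\end{align*}
where the last step uses $\mu(-\infty,-z)=\mu(z,\infty)$ by symmetry of $\mu$.

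A brief case split on the sign of $z$ collapses this further: for $z>0$ one gets $d(\rho^{z})=\mu(-z,z]$, while for $z<0$ one gets $d(\rho^{z})=-\mu(z,-z]$. Plugging these into the definitions
\begin{align*}
a(\mu)~=~\int d(\rho^{z})^{2}\,\mu(\textup{d}z),\qquad r(\mu)~=~\int z\,d(\rho^{z})\,\mu(\textup{d}z),
\end{align*}
I would then split each integral at $0$ (the point $\{0\}$ contributes nothing since $d(\rho^{0})=0$, even in the presence of an atom of $\mu$ at the origin) and handle the negative piece by the substitution $w=-z$. Symmetry of $\mu$ means that the pushforward of $\mu\!\restriction_{[-1/2,0)}$ under $z\mapsto -z$ equals $\mu\!\restriction_{(0,1/2]}$, so the integrals on $[-1/2,0)$ are mapped precisely onto their positive-side counterparts. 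The two minus signs in $z\cdot d(\rho^{z})$ for $z<0$ cancel, and $d(\rho^{z})^{2}$ is insensitive to the sign; consequently each negative-side integral reproduces the positive-side integrand and yields the factor of $2$ stated in the lemma.

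The only subtle point is that the half-open interval convention $(-z,z]$ has to be matched across the reflection. This works because the image of $(-z,z]$ under $x\mapsto -x$ is $[-z,z)$, and $\mu(-z,z]=\mu[-z,z)$ by symmetry of the atomic masses at $\pm z$. Otherwise the argument is a transparent sequence of rewrites with no genuine obstacle: no estimation or limiting step is required, since the two claimed identities are exact consequences of the definitions together with the symmetry of $\mu$. I would present the two computations in parallel to emphasise that the same change of variables produces both formulas.
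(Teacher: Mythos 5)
Your proof is correct and is exactly the computation the paper intends (the appendix in fact omits an explicit proof of this lemma, deferring to ``the representation given in Lemma \ref{lem:mu_rho_a_r}''): identify $\rho^{z}(A)=\mu(A-z)$, use the support assumption $\supp\mu\subset[-1/2,1/2]$ and the symmetry of $\mu$ to get $m_{1}(\rho^{z})=z$ and $d(\rho^{z})=\mu(-z,z]$ for $z>0$ (with the sign flip for $z<0$ and $d(\rho^{0})=0$), and then fold $[-1/2,0)$ onto $(0,1/2]$ by reflection to produce the factor $2$. The care you take with the half-open intervals and with a possible atom of $\mu$ at the origin is exactly the right level of attention; no gaps.
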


\begin{cor}
\label{cor:a_r_0}The terms $a$ and $r$ equal $0$ if and only if $\mu=\delta_{0}$.
\end{cor}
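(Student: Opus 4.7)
The plan is to prove the two directions separately using the integral representations from Lemma \ref{lem:mu_rho_a_r}. The forward implication is immediate: if $\mu=\delta_{0}$, both integrands are integrated against a measure that assigns zero mass to $(0,1/2]$, so $a(\mu)=r(\mu)=0$ trivially.

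For the converse I plan to argue by contraposition: assume $\mu\neq\delta_{0}$ and show both $a(\mu)>0$ and $r(\mu)>0$. By the symmetry of $\mu$ guaranteed by Assumption \ref{ass:sym}, the hypothesis forces $\mu((0,1/2])>0$. Since both integrands $F(z)^{2}$ and $zF(z)$, where $F(z):=\mu((-z,z])$, are non-negative and the factor $z$ is strictly positive on the integration range, it suffices to produce a subset of $(0,1/2]$ of positive $\mu$-measure on which $F$ is strictly positive. The key tool is the identity
\begin{align*}
F(z) ~=~ \mu(\{0\}) + 2\mu((0,z)) + \mu(\{z\}),
\end{align*}
which follows by decomposing $(-z,z]=(-z,0)\cup\{0\}\cup(0,z]$ and invoking symmetry; it displays $F$ as manifestly non-decreasing in $z$.

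The argument then splits on $z_{0}:=\inf(\mathrm{supp}(\mu)\cap(0,1/2])$. If $z_{0}=0$, then $\mu((0,z))>0$ for every $z>0$, whence $F>0$ throughout $(0,1/2]$ and the conclusion is immediate. If $z_{0}>0$, then $z_{0}\in\mathrm{supp}(\mu)$ by closedness of the support while $\mu((0,z_{0}))=0$ by the infimum property, so the positive mass $\mu((0,1/2])$ must be concentrated either at $\{z_{0}\}$ or in $(z_{0},1/2]$. In the atomic subcase the identity yields $F(z_{0})\geq\mu(\{z_{0}\})>0$, so the atom at $z_{0}$ alone contributes $F(z_{0})^{2}\mu(\{z_{0}\})>0$ to $a(\mu)$ and $z_{0}F(z_{0})\mu(\{z_{0}\})>0$ to $r(\mu)$; in the complementary subcase, $F(z)>0$ for all $z>z_{0}$ and integrating $F^{2}$ (respectively $zF$) against $\mu$ over the set $(z_{0},1/2]$ of positive measure gives strict positivity. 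The only mildly delicate point is handling the atom at $z_{0}$ cleanly, which the explicit symmetry identity above resolves.
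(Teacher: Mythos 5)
Your proof is correct and takes the same route the paper indicates: it deduces both directions from the integral representations in Lemma \ref{lem:mu_rho_a_r}, which is exactly what the paper's one-line justification (``follows easily from the representation'') appeals to. Your case analysis on $z_{0}=\inf\left(\supp\mu\cap\left(0,1/2\right]\right)$, separating the atomic and non-atomic situations, correctly fills in the details the paper omits.
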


This follows easily from the representation of $a$ and $r$ given
in Lemma \ref{lem:mu_rho_a_r}.

The statements in this section are proved in the appendix.

\subsection{Non-Negativity of $w$ in Additive Collective Bias Models with $\mu\protect\neq\rho$}

In this section, we will not assume the two measures $\mu$ and $\rho$ are equal. The random variables  $Z,Y_1,\ldots,Y_M$ are all independent and $Y_1,\ldots,Y_M$ are i.i.d.$\!\,$ copies of a random variable $Y$ that follows a distribution according to $\rho$. As we already know, $r-am<0$ is possible in this case. We will give conditions under which this does not
happen. Analogously to Lemma \ref{lem:mu_rho_a_r}, we have these
representations of $a,r$, and $s$:
\begin{lem}
\label{lem:mu_neq_rho_a_r_s}We can express the magnitudes $a,r$,
and $s$ as
\begin{align*}
a & =2\int_{\left(0,1/2\right]}\left(\rho\left(-z,z\right]\right)^{2}\mu\left(\textup{d}z\right),\quad r=2\int_{\left(0,1/2\right]}z\rho\left(-z,z\right]\mu\left(\textup{d}z\right),\quad s=2\int_{\left(0,1/2\right]}y\mu\left(-y,y\right]\rho\left(\textup{d}y\right).
\end{align*}
\end{lem}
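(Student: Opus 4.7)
The strategy is to use the additive structure $T = Z+Y$ together with the symmetry of $\mu$ and $\rho$ to reduce all three quantities to integrals over $(0,1/2]$. The calculation is essentially the one that establishes Lemma~\ref{lem:mu_rho_a_r}, with $\mu$ and $\rho$ kept separate instead of identified.

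First I would compute $m_1(\rho^z)$ and $d(\rho^z)$ explicitly. Because $\rho^z$ is the translate of $\rho$ by $z$ and $\rho$ is symmetric, $m_1(\rho^z) = z + \int y\,\rho(\textup{d}y) = z$. For $d(\rho^z)$, conditioning $T = z+Y$ on $Z=z$ gives
\begin{align*}
d(\rho^z)~=~\IP(z+Y>0)-\IP(z+Y<0)~=~\rho(-z,1/2]-\rho[-1/2,-z),
\end{align*}
and symmetry of $\rho$ converts $\rho[-1/2,-z)$ into $\rho(z,1/2]$. For $z>0$ this yields $d(\rho^z)=\rho(-z,z]$; in particular $d(\rho^z)$ is an odd function of $z$ (and vanishes at $z=0$).

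Next I would insert these expressions into the definitions of $a$ and $r$ from Definition~\ref{def:notation}. Since $d(\rho^z)^2$ and $z\,d(\rho^z)$ are both even functions of $z$, and $\mu$ is symmetric, the integrals over $[-1/2,0)$ and $(0,1/2]$ coincide, while the contribution at $z=0$ vanishes. Thus
\begin{align*}
a~=~\int d(\rho^z)^2\,\mu(\textup{d}z)~=~2\int_{(0,1/2]}\rho(-z,z]^2\,\mu(\textup{d}z),
\end{align*}
and, using $m_1(\rho^z)=z$,
\begin{align*}
r~=~\int z\,d(\rho^z)\,\mu(\textup{d}z)~=~2\int_{(0,1/2]}z\,\rho(-z,z]\,\mu(\textup{d}z).
\end{align*}

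For $s$, I would exploit the symmetric role of $Y$ and $Z$ in the definition $s = \IE(Y\,\textnormal{sgn}(Z+Y))$. Conditioning on $Y=y$ (using independence of $Z$ and $Y$) and applying the symmetry of $\mu$ gives, for $y>0$,
\begin{align*}
\IE\bigl(\textnormal{sgn}(Z+y)\bigr)~=~\mu(-y,1/2]-\mu(y,1/2]~=~\mu(-y,y],
\end{align*}
and the integrand $y\,\IE(\textnormal{sgn}(Z+y))$ is even in $y$, so the same halving argument produces $s = 2\int_{(0,1/2]} y\,\mu(-y,y]\,\rho(\textup{d}y)$.

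The only mildly delicate point is bookkeeping with open/closed endpoints and atoms: atoms at $\pm z$ carry equal $\rho$-mass by symmetry, and atoms at $0$ contribute $0$ to each integrand (since $d(\rho^0)=0$ and the factor $z$ or $y$ kills the integrand). Hence these ambiguities do not affect any of the three displayed formulas, and the lemma follows.
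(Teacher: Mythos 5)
Your computation is correct and is precisely the direct calculation the paper leaves implicit (the lemma is stated as ``analogous to Lemma~\ref{lem:mu_rho_a_r}'', which is itself not proved in the appendix): identify $m_1(\rho^z)=z$ and $d(\rho^z)=\rho(-z,z]$ for $z>0$ from the additive structure and the symmetry of $\rho$, check oddness, and fold the $\mu$-integral onto $(0,1/2]$, with the parallel argument in $y$ for $s=\IE\bigl(Y\,\textnormal{sgn}(Z+Y)\bigr)$. Your endpoint/atom bookkeeping is also right, so nothing is missing.
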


First we note that if the group modifiers override the central bias
almost surely, then the groups are independent (but the voters within
each group are still positively correlated!). In this case, the optimal
weights are proportional to the group sizes.
\begin{prop}
If $\left|Y\right|$ almost surely dominates $\left|Z\right|$, then
we have $a=r=0$ and $r-am=0$.
\end{prop}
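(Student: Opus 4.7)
The plan is to combine the integral representations from Lemma~\ref{lem:mu_neq_rho_a_r_s} with a support condition extracted from the hypothesis. Since $Y$ and $Z$ are independent with symmetric distributions on $[-1/2,1/2]$, the statement that $|Y|$ almost surely dominates $|Z|$ forces the following: for $\mu$-almost every $z$, the measure $\rho$ charges no mass on $\{y:|y|<|z|\}$, i.e. $\rho(-|z|,|z|)=0$. Under the strict interpretation of ``dominates'' this upgrades to $\rho[-|z|,|z|]=0$ for $\mu$-a.e.\! $z\neq 0$, so that $\rho(-z,z]=0$ for $\mu$-a.e.\! $z\in(0,1/2]$.

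Substituting this vanishing into the representations provided by Lemma~\ref{lem:mu_neq_rho_a_r_s}, the integrand $\rho(-z,z]$ is zero $\mu$-a.e. on $(0,1/2]$, and so
\[
a = 2\int_{(0,1/2]} \bigl(\rho(-z,z]\bigr)^{2}\,\mu(\mathrm{d}z) = 0 \quad\text{and}\quad r = 2\int_{(0,1/2]} z\,\rho(-z,z]\,\mu(\mathrm{d}z) = 0.
\]
The identity $r - am = 0 - 0\cdot m = 0$ is then immediate, and by Theorem~\ref{thm:asweights} the constant component $C_{2}$ of the optimal weights vanishes while $C_{1}$ reduces to $\langle\om_{1}(\rho)\rangle$, recovering the independent-groups formula $w_{\lambda}\propto\alpha_{\lambda}$.

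The main subtlety, and the place where I would be most careful, is the boundary event $\{|Y|=|Z|\}$. Under a weak reading of the hypothesis, common atoms of $\mu$ and $\rho$ at some $z>0$ could leave $\rho\{z\}>0$ and thereby compromise the naive vanishing of $\rho(-z,z]$. The cleanest way to bypass this is a probabilistic argument using the additive structure $T_{\lambda}=Z+Y_{\lambda}$: whenever $|Y_{\lambda}|>|Z|$, $\mathrm{sgn}(T_{\lambda})=\mathrm{sgn}(Y_{\lambda})$, so in the large-$N$ limit $\chi_{\lambda}$ coincides with $\mathrm{sgn}(Y_{\lambda})$. Because $Y_{1},\ldots,Y_{M}$ are i.i.d.\! copies of a symmetric random variable, the signs $\mathrm{sgn}(Y_{\lambda})$ are independent with zero mean, which gives $a = \lim\IE(\chi_{\lambda}\chi_{\nu}) = 0$ via Theorem~\ref{thm:limits}; the same independence, combined with $\IE(Z)=0$, yields $r = \langle m_{1}(\rho)d(\rho)\rangle = \IE(Z\,\mathrm{sgn}(Y_{\lambda}))=0$. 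Either route delivers $a=r=0$ and hence $r-am=0$.
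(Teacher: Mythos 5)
Your first argument is precisely the paper's proof: the paper disposes of this proposition with the single remark that it easily follows from Lemma \ref{lem:mu_neq_rho_a_r_s}, i.e.\! strict domination $|Y|>|Z|$ a.s.\! together with independence forces $\rho(-z,z]=0$ for $\mu$-almost every $z\in(0,1/2]$, which annihilates both integrals defining $a$ and $r$. Your caveat about the boundary event $\{|Y|=|Z|\}$ (which would indeed break the claim under a weak reading of ``dominates'', e.g.\! $\mu=\rho=\tfrac12(\delta_{-1/2}+\delta_{1/2})$) and the alternative route via $\chi_{\lambda}\to\mathrm{sgn}(Y_{\lambda})$ are both sound, but they go beyond what the paper records.
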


This easily follows from Lemma \ref{lem:mu_neq_rho_a_r_s}.
\begin{rem}
If, instead, $\left|Z\right|$ almost surely dominates $\left|Y\right|$,
then the CBM is tightly correlated. We note, however, that in that case any set of weights
is optimal, among them weights proportional to the group sizes.
\end{rem}

Now we turn to first order stochastic dominance which is a weaker
form of the general concept of stochastic dominance.

\begin{defn}
We say that a random variable $X_{1}$ first order stochastically
dominates a random variable $X_{2}$ if, for all $x\in\mathbb{R}$,
$P\left(X_{1}\leq x\right)\leq P\left(X_{2}\leq x\right)$ holds.
We will write $X_{1}\succ X_{2}$ for this relation and FOSD for first
order stochastic dominance.
\end{defn}

This is weaker than almost sure dominance as it is possible to have
$X_{1}$ first order stochastically dominate $X_{2}$ without $X_{1}>X_{2}$
holding almost surely. We have the following sufficient conditions
for the non-negativity of the optimal weights:
\begin{prop}
\label{prop:FOSD_suff}If $\left|Z\right|\succ\left|Y\right|$ and
$a\leq1/2$, then $r-am\geq0$. If $\left|Y\right|\succ\left|Z\right|$
and $s\leq2r$, then $r-am\geq0$.
\end{prop}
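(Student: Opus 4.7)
The plan is to observe that $m = r + s$, so $r - am = r(1-a) - as$, and then establish $r(1-a) \geq as$ in each case. First I would recast the three quantities in Lemma \ref{lem:mu_neq_rho_a_r_s} in probabilistic form. Writing $F(t):=\mathbb{P}(|Z|\leq t)$, $G(t):=\mathbb{P}(|Y|\leq t)$, and $g(z):=\rho(-z,z]$, the independence of $Z$ and $Y$ together with the symmetry of $\mu$ and $\rho$ yields
\begin{align*}
   r ~&=~\mathbb{E}\bigl[|Z|\,\mathbf{1}(|Y|<|Z|)\bigr] + \tfrac{1}{2}\mathbb{E}\bigl[|Z|\,\mathbf{1}(|Y|=|Z|)\bigr],\\
   s ~&=~\mathbb{E}\bigl[|Y|\,\mathbf{1}(|Z|<|Y|)\bigr] + \tfrac{1}{2}\mathbb{E}\bigl[|Y|\,\mathbf{1}(|Z|=|Y|)\bigr],\\
   a ~&=~\mathbb{E}\bigl[g(|Z|)^{2}\bigr],
\end{align*}
where $g$ is non-decreasing on $[0,1/2]$ with $g(0)=0$. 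The boundary contributions on $\{|Y|=|Z|\}$ cancel in $r-s$, leaving $r-s = \mathbb{E}[|Z|\,G^{-}(|Z|)] - \mathbb{E}[|Y|\,F^{-}(|Y|)]$, with $F^{-},G^{-}$ the left-continuous modifications.

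For Case 1, the hypothesis $|Z|\succ|Y|$ gives $F^{-}\leq G^{-}$ pointwise, and the standard FOSD consequence $\mathbb{E}[h(|Z|)]\geq\mathbb{E}[h(|Y|)]$ for every non-decreasing non-negative $h$. Applied to $h(t)=tF^{-}(t)$, which is non-decreasing on $[0,\infty)$, this chains into $r = \mathbb{E}[|Z|\,G^{-}(|Z|)] \geq \mathbb{E}[|Z|\,F^{-}(|Z|)] \geq \mathbb{E}[|Y|\,F^{-}(|Y|)] = s$. Combined with $a\leq 1/2$ and $s\geq 0$, I then obtain $r(1-a) - as \geq s(1-a) - as = s(1-2a) \geq 0$.

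For Case 2, the hypothesis $|Y|\succ|Z|$ together with $g^{2}$ being non-decreasing gives $a = \mathbb{E}[g(|Z|)^{2}] \leq \mathbb{E}[g(|Y|)^{2}]$. The right-hand side equals $2\int_{(0,1/2]}\rho(-y,y]^{2}\,\rho(\textup{d}y)$, which is precisely the quantity $a(\rho)$ of Lemma \ref{lem:mu_rho_a_r} applied with $\mu$ replaced by $\rho$; by Theorem \ref{thm:mu_rho} (whose bound $a(\cdot)\leq 1/3$ holds for every symmetric probability measure) this is bounded by $1/3$. Hence $a\leq 1/3$, and using $s\leq 2r$ yields $r(1-a) - as \geq r(1-a) - 2ar = r(1-3a) \geq 0$.

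The main technical hurdle sits in the first step: verifying, in the presence of possible atoms of $\mu$ and $\rho$, that the half-open intervals $(-z,z]$ in Lemma \ref{lem:mu_neq_rho_a_r_s} translate into exactly the indicator decomposition above, so that the boundary contributions on $\{|Y|=|Z|\}$ cancel cleanly in $r-s$ and $g(0)=0$ removes any correction in the formula for $a$. Once these representations are secured, both cases reduce to combining two standard FOSD consequences, with Theorem \ref{thm:mu_rho} doing the heavy lifting in Case 2 by supplying a universal upper bound on $a(\rho)$.
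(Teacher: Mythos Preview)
Your proof is correct and follows essentially the same skeleton as the paper: in both cases you reduce $r-am\ge 0$ to the identity $m=r+s$, then in Case~1 you prove $r\ge s$ via two FOSD-type inequalities and conclude from $a\le 1/2$, while in Case~2 you bound $a\le 1/3$ using Theorem~\ref{thm:mu_rho} and conclude from $s\le 2r$. The tactical details differ slightly---you work in explicit probabilistic language with $F^{-},G^{-}$ and you bound $a$ via $a(\rho)$ where the paper bounds it via $a(\mu)$---but these are equivalent routes through the same argument.

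One notational slip worth fixing: in your Case~1 chain you write $r=\mathbb{E}[|Z|\,G^{-}(|Z|)]$ and $s=\mathbb{E}[|Y|\,F^{-}(|Y|)]$, which are not literally equalities when $\{|Y|=|Z|\}$ has positive probability. You had already observed that the boundary terms cancel in $r-s$, so the conclusion $r\ge s$ is still valid; just rewrite the chain as proving $\mathbb{E}[|Z|\,G^{-}(|Z|)]\ge\mathbb{E}[|Y|\,F^{-}(|Y|)]$ and then invoke that cancellation to deduce $r\ge s$.
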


The next idea is to assume that the measures $\mu$ and $\rho$ assign
similar probabilities to each event.
\begin{prop}
\label{prop:ribbon}Suppose there are constants $c,C>0$ such that,
for all measurable sets $A$,
\[
c\rho A\leq\mu A\leq C\rho A
\]
holds. Then each of the following two conditions is individually sufficient
for $r-am\geq0$:

\[
\text{1}.\;c\geq\frac{C^{2}}{3-C},C<3,\quad\text{2}.\;C\leq c\left(3c^{2}-1\right).
\]

If we assume additionally that $c=1/C$, then a sufficient condition
for $r-am\geq0$ is given by
\[
a\leq\frac{1}{1+C^{2}}.
\]
\end{prop}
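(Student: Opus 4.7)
The plan is to establish a single clean sufficient condition $a(c+C)\le c$ on the inter-group correlation $a$, and then to derive each of the three criteria from a different upper bound on $a$.

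First, Lemma~\ref{lem:mu_neq_rho_a_r_s} lets me write $r - am = r(1-a) - as$, where, with $F_\rho(z):=\rho(-z,z]$ and $F_\mu(z):=\mu(-z,z]$,
\[
a = 2\int_{(0,1/2]} F_\rho^{2}\,d\mu, \quad r = 2\int_{(0,1/2]} zF_\rho(z)\,\mu(dz), \quad s = 2\int_{(0,1/2]} yF_\mu(y)\,\rho(dy).
\]
The hypothesis $c\rho\le\mu\le C\rho$ yields the pointwise comparisons $F_\rho \le F_\mu/c$ and $F_\mu \le CF_\rho$, together with the integral comparison $c\,d\rho\le d\mu\le C\,d\rho$ for non-negative integrands. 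Setting $r_\rho := 2\int_{(0,1/2]} zF_\rho(z)\,\rho(dz)$, these give $r\ge c\,r_\rho$ and $s\le C\,r_\rho$. Using $1-a\ge 0$ (since $a=\langle d(\rho)^{2}\rangle\in[0,1]$), I combine these estimates to obtain
\[
r-am \;\ge\; r_\rho\bigl[c(1-a)-aC\bigr] \;=\; r_\rho\bigl[c-a(c+C)\bigr].
\]
Since $r_\rho>0$ outside the trivial Impartial Culture case $\rho=\mu=\delta_{0}$ (in which the proposition is vacuous), it suffices to verify $a\le c/(c+C)$.

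Next, I would bound $a$ from above in two different ways by exploiting each direction of the measure comparison. On the one hand, $d\mu\le C\,d\rho$ gives
\[
a \;=\; 2\int F_\rho^{2}\,d\mu \;\le\; C\cdot 2\int F_\rho^{2}\,d\rho \;=\; C\,a(\rho),
\]
while on the other hand the pointwise bound $F_\rho^{2}\le F_\mu^{2}/c^{2}$ gives
\[
a \;\le\; \frac{1}{c^{2}}\cdot 2\int F_\mu^{2}\,d\mu \;=\; a(\mu)/c^{2}.
\]
Theorem~\ref{thm:mu_rho}, applied to $\rho$ and to $\mu$ separately, gives $a(\rho),a(\mu)\le 1/3$, and hence $a\le C/3$ and $a\le 1/(3c^{2})$. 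Condition~1 then follows from $C/3 \le c/(c+C)$, which rearranges to $c(3-C)\ge C^{2}$ (and so implicitly requires $C<3$). Condition~2 follows from $1/(3c^{2})\le c/(c+C)$, which rearranges to $3c^{3}\ge c+C$, i.e., $C\le c(3c^{2}-1)$. For the final assertion, specializing $c=1/C$ gives $c/(c+C) = 1/(1+C^{2})$, so $a\le 1/(1+C^{2})$ is directly sufficient.

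The main obstacle is conceptual rather than computational: recognizing that $a\le c/(c+C)$ is the correct unifying criterion, and that both directions of the ribbon comparison $c\rho\le\mu\le C\rho$ must be exploited separately — one to bound $a$ by $Ca(\rho)$ and one to bound it by $a(\mu)/c^{2}$ — in order to cover the distinct parameter regimes of Conditions~1 and~2.
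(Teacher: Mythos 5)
Your proof is correct and follows essentially the same route as the paper: reduce to the sufficient condition $a\le c/(c+C)$ via the bounds $r\ge c\,r_\rho$ and $s\le C\,r_\rho$, then bound $a$ from above by $C/3$ and by $1/(3c^2)$ using Theorem \ref{thm:mu_rho}, and finally substitute $c=1/C$ for the last claim. No gaps.
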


Earlier we saw that if both $\mu$ and $\rho$ are uniform distributions (we will write $\mathcal{U}$ for a uniform distribution)
on symmetric intervals around the origin, $r-am\geq0$ holds. We can
generalise this result as follows:
\begin{prop}
\label{prop:rho_unif_mu_any}Let $\rho=\mathcal{U}\left[-1/2,1/2\right]$
and $\mu\in\mathcal{M}_{1}\left(\left[-1/2,1/2\right]\right)$. Then
$r-am\geq0$ is satisfied and $r=am$ if and only if $\mu=1/2\left(\delta_{-1/2}+\delta_{1/2}\right)$.
\end{prop}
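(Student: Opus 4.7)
The plan is to push everything through Lemma \ref{lem:mu_neq_rho_a_r_s} with $\rho=\mathcal{U}[-1/2,1/2]$ and observe that the uniform distribution collapses $a,r,m$ to simple expressions in a single moment of $\mu$, after which the inequality becomes trivial.

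First I would note that for $\rho=\mathcal{U}[-1/2,1/2]$ and $z\in(0,1/2]$ one has $\rho(-z,z]=2z$. Substituting into Lemma \ref{lem:mu_neq_rho_a_r_s} yields
\begin{align*}
a~=~8\int_{(0,1/2]}z^{2}\,\mu(\textup{d}z),\qquad r~=~4\int_{(0,1/2]}z^{2}\,\mu(\textup{d}z),
\end{align*}
so the key identity $a=2r$ drops out immediately. Next I would compute $m=\mathbb{E}|Z+Y|$ by first conditioning on $Z=z$: a direct integration gives $\mathbb{E}|z+Y|=\tfrac{1}{4}+z^{2}$ for every $z\in[-1/2,1/2]$, and integrating against $\mu$ (using the symmetry of $\mu$ so that $\int z^{2}\,\mu(\textup{d}z)=2\int_{(0,1/2]}z^{2}\,\mu(\textup{d}z)=r/2$) produces $m=\tfrac{1}{4}+\tfrac{r}{2}$.

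Plugging these three expressions into $r-am$ yields
\begin{align*}
r-am~=~r-2r\left(\tfrac{1}{4}+\tfrac{r}{2}\right)~=~r\left(\tfrac{1}{2}-r\right),
\end{align*}
so the non-negativity reduces to the single scalar bound $r\leq 1/2$. That bound comes from $z^{2}\leq 1/4$ on $[-1/2,1/2]$ combined with $\mu(0,1/2]\leq 1/2$ (symmetry of $\mu$), giving $r=4\int_{(0,1/2]}z^{2}\,\mu(\textup{d}z)\leq 4\cdot\tfrac{1}{4}\cdot\tfrac{1}{2}=\tfrac{1}{2}$.

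For the equality characterisation I would argue that $r(\tfrac{1}{2}-r)=0$ forces either $r=0$ or $r=1/2$; the latter requires $\mu$-a.e. equality in both estimates above, hence $\mu$ is supported on $\{-1/2,1/2\}$ and, by symmetry, $\mu=\tfrac{1}{2}(\delta_{-1/2}+\delta_{1/2})$. The case $r=0$ corresponds to $\mu=\delta_{0}$, which was discarded earlier by assumption \eqref{eq:rhotriv} applied to the additive model (where $\mu=\delta_{0}$ reduces to the simple CBM already handled). The only genuine obstacle here is bookkeeping: one must be careful that the formulas in Lemma \ref{lem:mu_neq_rho_a_r_s} integrate over $(0,1/2]$ rather than $[-1/2,1/2]$, and that the symmetry of $\mu$ is used correctly when passing from $\int z^{2}\mu(\textup{d}z)$ to $r/2$. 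Beyond this, the computation is essentially a one-line cancellation.
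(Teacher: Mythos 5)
Your proof is correct, and it takes a genuinely different and substantially more elementary route than the paper. The paper also starts from Lemma \ref{lem:mu_neq_rho_a_r_s} and the identity $a=2r$, but it keeps $s=2\int_0^{1/2}y\,\mu(-y,y]\,\textup{d}y$ as a separate quantity, reduces the claim to the variational bound $T(\mu):=\int_0^{1/2}y\,\mu(-y,y]\,\textup{d}y+E(Z^2)\leq 1/4$ over all probability measures, and then proves that bound by establishing linearity of $T$, doing an induction on the support size of discrete measures, and finally passing to general $\mu$ by weak-convergence density and continuity of $T$. Your observation that the uniformity of $\rho$ lets you compute $m=E|Z+Y|$ in closed form, $E|z+Y|=\tfrac14+z^2$, collapses all of this: one gets $m=\tfrac14+\tfrac r2$, hence $r-am=r(\tfrac12-r)$ with $0\leq r=2E(Z^2)\leq\tfrac12$, and both the inequality and the equality characterisation drop out of a one-variable quadratic. (Equivalently, your computation shows $T(\mu)=\tfrac18+\tfrac12E(Z^2)$, which makes the paper's induction and density argument unnecessary.) This is a genuine simplification.

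One small caveat. Your dismissal of the case $r=0$ is not justified by \eqref{eq:rhotriv}: for the additive model with $\rho=\mathcal{U}[-1/2,1/2]$ one has $\rho^{z}=\mathcal{U}[z-1/2,z+1/2]\neq\delta_{0}$ for every $z$, so \eqref{eq:rhotriv} holds even when $\mu=\delta_{0}$, and that choice of $\mu$ is not excluded by the standing assumptions. For $\mu=\delta_{0}$ one indeed gets $a=r=0$ and hence $r=am$, so strictly speaking the ``only if'' in the statement of Proposition \ref{prop:rho_unif_mu_any} has this degenerate second equality case. This is a defect of the proposition as stated rather than of your argument --- the paper's own proof silently divides by $a$ when converting $r=am$ into $T(\mu)=1/4$ and so overlooks the same case --- but the correct way to set it aside is to note that $\mu=\delta_{0}$ makes the groups independent (the simple CBM already treated in \cite{HOEC}), not to appeal to \eqref{eq:rhotriv}. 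With that one sentence repaired, your proof is complete.
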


\begin{rem}
Since $\mu=1/2\left(\delta_{-1/2}+\delta_{1/2}\right)$ implies that
$\left|Z\right|$ almost surely dominates $\left|Y\right|$, we can
disregard this case. Thus, this proposition implies that for $\rho=\mathcal{U}\left[-1/2,1/2\right]$
the optimal weights are given by a constant and a proportional part.
We also note here that $\rho$ being uniform on the entire interval
$\left[-1/2,1/2\right]$ is important. For every $0<\gamma<1/2,\rho=\mathcal{U}\left[-\gamma,\gamma\right]$,
there is a $\mu$ such that $r-am$ is negative.

The last result in this section concerns a case where $\rho$ is some
symmetric measure and $\mu$ is a contracted version of $\rho$ onto
some shorter interval $\left[-c/2,c/2\right]$ for some $0<c<1$. Hence, the global bias tends to be weaker than the local bias modifier.
For the rest of this section, assume the following conditions hold
\end{rem}

\begin{ass}\label{def:mu_contraction}
\begin{enumerate}
\item $\rho$ has no atoms.
\item There is a function $g:\left[0,\infty\right)\rightarrow\mathbb{R}$
with the property that $\rho\left(0,xy\right)=g\left(x\right)\rho\left(0,y\right)$
holds for all $x\geq0$ and all $y\in\left[0,1/2\right]$ such that
$xy\leq1/2$.
\item $\mu\left(cA\right)=\rho A$ for some fixed $0<c<1$ and all measurable
$A$.
\end{enumerate}
\end{ass}

The second point is a homogeneity condition. The last point is the
aforementioned contraction property.

Let $F_{\rho}$ be the distribution function of the sub-probability
measure $\rho\,\big|\!\left[0,1/2\right]$, i.e.$\!\,$ $\rho$ constrained to the subspace $[0,1/2]$. Note that due to property
1 above, $\rho\left[0,1/2\right]=1/2$, and hence $F_{\rho}\left(0\right)=0$
and $F_{\rho}\left(1/2\right)=1/2$.

The three properties in Assumptions \ref{def:mu_contraction}  already determine that the measures $\rho$
and $\mu$ belong to a two-parameter family indexed by $\left(t,c\right)\in\left(0,\infty\right)\times\left(0,1\right)$.
\begin{lem}
\label{lem:g_F_properties}If the second condition in Assumptions \ref{def:mu_contraction}
is satisfied, then
\begin{enumerate}
\item For all $y\in\left[0,1\right]$, $g\left(y\right)=2F_{\rho}\left(y/2\right)$.
\item $\rho$ has no atoms, unless $\rho=\delta_{0}$.
\item $g$ is multiplicative: for all $x,y\geq0$, $g\left(xy\right)=g\left(x\right)g\left(y\right)$.
\item $F_{\rho}$ has the form $F_{\rho}\left(y\right)=2^{t-1}y^{t}$ for
some fixed $t\geq0$.
\end{enumerate}
\end{lem}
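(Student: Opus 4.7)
The plan is to establish the four parts in the order (1), (3), (4), (2), which minimises circular dependence. Throughout, I assume $\rho\neq\delta_{0}$: in the degenerate case $\rho=\delta_{0}$ the homogeneity relation is vacuous (since $\rho(0,\cdot)\equiv 0$) and $g$ is entirely underdetermined, which is precisely the exception carved out in part (2).

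For (1), substitute $y=1/2$ into the homogeneity relation $\rho(0,xy)=g(x)\rho(0,y)$ to obtain $\rho(0,x/2)=g(x)\rho(0,1/2)$ for all $x\in[0,1]$. The symmetry of $\rho$ together with the absence of an atom at $0$ (a byproduct of (2)) gives $\rho(0,1/2)=1/2$, so $g(y)=2F_{\rho}(y/2)$ on $[0,1]$. For (3), since $\rho\neq\delta_{0}$, for any $x,y\geq 0$ one can choose $z>0$ small enough that $xyz,\,yz,\,z\leq 1/2$ and $\rho(0,z)>0$. Two applications of the homogeneity relation then yield
\[
g(xy)\,\rho(0,z)~=~\rho(0,xyz)~=~g(x)\,\rho(0,yz)~=~g(x)\,g(y)\,\rho(0,z),
\]
and dividing by $\rho(0,z)>0$ proves multiplicativity on all of $[0,\infty)$.

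For (4), note that $g$ is non-decreasing on $[0,\infty)$ (inherited from the monotonicity of $z\mapsto\rho(0,z)$) and that the homogeneity relation at $x=1$, combined with $\rho(0,y)>0$ for some $y$, forces $g(1)=1$. Combined with the multiplicativity from (3), one applies the standard Cauchy-type argument to $h(s):=\log g(e^{s})$---which is additive and non-decreasing on $\IR$, hence linear, $h(s)=t\,s$ with $t\geq 0$---to get $g(x)=x^{t}$ for $x>0$. Substituting into (1) delivers $F_{\rho}(y)=g(2y)/2=2^{t-1}y^{t}$ on $[0,1/2]$. Part (2) now follows: a power function is continuous, so $\rho$ has no atoms in $(0,1/2]$, and by symmetry none in $[-1/2,0)$; the sub-case $t=0$ is incompatible with $\rho(0,z)\to 0$ as $z\searrow 0$, so $t>0$ and $F_{\rho}(0)=0$ rules out an atom at $0$ as well.

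The main obstacle is the power-function deduction: one must verify that $g$ is strictly positive on $(0,\infty)$ before taking logarithms. This uses only the multiplicativity from (3) together with $g(1)=1$, since $g(x_{0})=0$ for any $x_{0}>0$ would force $g(y)=g(x_{0})g(y/x_{0})=0$ for every $y\geq 0$, contradicting $g(1)=1$. A secondary subtlety is that Part (1) as stated formally uses the no-atom-at-$0$ property proved only in (2); but this is not a genuine circularity, since the analysis of $g(0)$ and $g(1)$ via homogeneity already excludes an atom at $0$ before one invokes (1).
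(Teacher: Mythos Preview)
Your argument is correct and shares the paper's core device---the Cauchy functional equation applied to $s\mapsto\log g(e^{s})$---but reorganises the logic in two ways. For~(3), the paper routes through~(1) and the identity $F_{\rho}(1/2)=1/2$, writing $g(xy)F_{\rho}(1/2)=F_{\rho}(xy/2)=g(x)g(y)F_{\rho}(1/2)$; you instead apply the homogeneity relation directly at a small base point~$z$, which decouples~(3) from~(1). One line is missing, though: you assert that $z>0$ can be chosen small with $\rho(0,z)>0$, and this needs the observation that homogeneity forces $\rho(0,\cdot)$ to be either identically zero or everywhere positive on $(0,1/2]$ (if $\rho(0,z_{0})>0$ then $g(x)>0$ for all $x>0$, else $\rho(0,xz_{0})=0$ for all small $x$ propagates to $\rho(0,\cdot)\equiv 0$). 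For~(2), the paper gives a direct discontinuity argument showing that a single positive atom would make every positive point an atom; you instead read off the absence of atoms from the continuity of $z\mapsto(2z)^{t}$ obtained in~(4), which is shorter once~(4) is available.

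One correction to your closing remark: the claim that ``the analysis of $g(0)$ and $g(1)$ via homogeneity already excludes an atom at~$0$'' is not valid. The homogeneity relation involves only $\rho(0,z)$ for $z>0$ and places no constraint whatsoever on $\rho\{0\}$---for instance, $\rho=\tfrac{1}{2}\delta_{0}+\tfrac{1}{2}\,\mathcal{U}[-1/2,1/2]$ satisfies condition~2 with $g(x)=x$ yet has an atom at~$0$. The paper's own proof has the very same circularity at this step (it invokes~(1), hence $F_{\rho}(1/2)=1/2$, inside the proof of~(2)), and in practice it is the \emph{first} assumption in Assumptions~\ref{def:mu_contraction} that disposes of the atom at~$0$; your parenthetical resolution is not a substitute for that.
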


\begin{rem}
If $t=0$ in the last point of the lemma, then $\rho=\mu=\delta_{0}$
and all voters are independent. We avoid this case by specifying the
first condition in Assumptions \ref{def:mu_contraction}.
\end{rem}

Now we state the theorem concerning the sign of the term $r-am$.
\begin{thm}
\label{thm:fam_rho_mu}Let the conditions stated in Assumptions \ref{def:mu_contraction}
hold for $\rho$ and $\mu$. Then, by the last lemma, $F_{\rho}\left(y\right)=2^{t-1}y^{t}$.
If $0<t<1$, then there is a unique $c_{0}\in\left(0,1\right)$ such
that, for all $c\in\left(0,c_{0}\right)$, $r-am$ is negative, and, for
all $c\in\left[c_{0},1\right)$, $r-am\geq0$ with equality if and
only if $c=c_{0}$. If $t\geq1$, then $r>am$.

The critical point $c_{0}$ for the regime $t\in\left(0,1\right)$
satisfies $\lim_{t\nearrow1}c_{0}=0$.
\end{thm}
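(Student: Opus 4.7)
The plan is to exploit the power-law structure forced by Assumptions~\ref{def:mu_contraction} to reduce the claim to a single scalar inequality in $c$ and $t$. By Lemma~\ref{lem:g_F_properties}, $F_{\rho}(y)=2^{t-1}y^{t}$ on $[0,1/2]$, so $\rho$ has density $2^{t-1}t\,|y|^{t-1}$ on $[-1/2,1/2]\setminus\{0\}$. Since $\mu(cA)=\rho A$, the measure $\mu$ is the push-forward of $\rho$ under $y\mapsto cy$, supported on $[-c/2,c/2]$, and
\[
\rho(-z,z]~=~2^{t}z^{t}\text{ on }[0,1/2],\qquad \mu(-z,z]~=~\begin{cases}(2z/c)^{t}&\text{for }z\in(0,c/2],\\ 1&\text{for }z\in(c/2,1/2].\end{cases}
\]
Plugging these into Lemma~\ref{lem:mu_neq_rho_a_r_s} and evaluating the resulting power integrals, I expect
\[
a~=~\frac{c^{2t}}{3},\qquad r~=~\frac{t\,c^{t+1}}{2(2t+1)},\qquad s~=~\frac{t\,c^{t+1}}{2(2t+1)}+\frac{t\,(1-c^{t+1})}{2(t+1)},
\]
and $m=r+s$.

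I would then manipulate $r-am\ge 0$ directly. Clearing the positive factor $tc^{t+1}/[6(t+1)(2t+1)]$ reduces the inequality to the scalar condition $h(c)\le 3(t+1)$, where $h(c):=(2t+1)\,c^{t-1}+c^{2t}$. For $t\ge 1$, both $c^{t-1}$ and $c^{2t}$ lie in $(0,1]$, so $h(c)\le(2t+1)+1=2(t+1)<3(t+1)$, which proves $r>am$ strictly. For $t\in(0,1)$, one has $h(1)=2(t+1)<3(t+1)$ while $h(c)\to\infty$ as $c\to 0^{+}$ because $c^{t-1}$ blows up, so by continuity there exists at least one $c_{0}\in(0,1)$ with $h(c_{0})=3(t+1)$.

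For the uniqueness of $c_0$, I would use $h'(c)=c^{t-2}\bigl[(2t+1)(t-1)+2t\,c^{t+1}\bigr]$. The bracket is strictly increasing in $c$, so $h'$ changes sign at most once on $(0,1]$. Hence either $h$ is strictly decreasing on $(0,1]$ (this happens for $t\in(0,1/2]$, since at $c=1$ the bracket equals $(2t-1)(t+1)\le 0$), or $h$ has a unique interior minimum at some $c_{*}$ and is decreasing on $(0,c_{*}]$, increasing on $[c_{*},1]$. In the latter case the maximum of $h$ on $[c_{*},1]$ is $h(1)=2(t+1)<3(t+1)$, so the level $3(t+1)$ cannot be attained on the increasing branch and $c_{0}$ lies uniquely on the decreasing branch.

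Finally, for $c_{0}\to 0$ as $t\nearrow 1$ I would argue by contradiction: if some subsequence of $c_{0}(t)$ were bounded below by $\delta>0$, then along that subsequence $c_{0}^{\,t-1}\to 1$ and $c_{0}^{\,2t}\to c_{0}^{\,2}\le 1$, giving $h(c_{0})\to 3+c_{0}^{\,2}\le 4$, which contradicts $h(c_{0})=3(t+1)\to 6$. The main obstacle in this scheme is the uniqueness step in the range $t\in(1/2,1)$, where $h$ is non-monotone; the crucial observation that saves uniqueness is that the endpoint value $h(1)=2(t+1)$ sits strictly below the target level $3(t+1)$, ruling out a second crossing on the increasing branch. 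Everything else is a matter of routine integration and algebraic rearrangement.
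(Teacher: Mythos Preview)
Your proposal is correct. Both your proof and the paper's reduce the question to analysing a single scalar function of $c$, and the two functions are algebraically equivalent: your condition $(2t+1)c^{t-1}+c^{2t}\le 3(t+1)$ is obtained from the paper's $h(c)=c^{1+t}-3(1+t)c^{1-t}+(1+2t)\le 0$ by dividing through by $c^{1-t}$. The main difference lies in how the scalar inequality is reached. You exploit that Lemma~\ref{lem:g_F_properties} already pins down the exact power-law density, compute $a,r,s$ by explicit integration, and then clear positive factors; the paper instead keeps the integrals abstract, invokes Theorem~\ref{thm:mu_rho} to identify $a=\langle d(\rho)^2\rangle=g(c)^{2}/3$, and uses two Lebesgue--Stieltjes substitution lemmas (swapping $\mu$ and $\rho$ in the integrals) before arriving at its version of the scalar inequality. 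Your route is shorter and more transparent given the specific two-parameter family at hand; the paper's route highlights the structural identities behind the cancellation. For the uniqueness step you factor $h'(c)=c^{t-2}[(2t+1)(t-1)+2tc^{t+1}]$ and use monotonicity of the bracket, whereas the paper uses $h''>0$ and locates the critical point $x_0=(3(1-t))^{1/(2t)}$ explicitly; both arguments are equivalent. Likewise for the limit $c_0\to 0$: the paper squeezes $c_0<x_0\to 0$, while your contradiction argument via $h(c_0)\to 3+c_0^{2}\le 4\ne 6$ works just as well.
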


\begin{rem}
Note that $t=1$ is the case of the uniform distributions $\rho=\mathcal{U}\left[-\gamma,\gamma\right],\mu=\mathcal{U}\left[-\beta,\beta\right]$
with $\gamma=1/2$ and $0<\beta=c/2<1/2$.
\end{rem}

The proofs of these statements can be found in the appendix.

\section{Extensions}\label{sec:extensions}

An obvious extension to the general CBM framework is to allow different
conditional distributions $\rho_{\lambda}^{z}$ for the different groups to
account for more strongly or more weakly correlated groups. More precisely,
\begin{align}
   \IP\big(\ux_{1},\ux_{2},\ldots,\ux_{M}\big)~
=~&\int  \left( \int P_{t_{1}}(\ux_{1})\,\rho_{1}^{z}(\textup{d}t_{1})
\cdots\int P_{t_{M}}(\ux_{M})
\,\rho_{M}^{z}(\textup{d}t_{M}) \right)\;\mu(\textup{d}z)\label{eq:extCBM}
\end{align}
A large part of the analysis in Sections \ref{sec:demdef} and \ref{sec:asymp} can be done for this more general case as well. In fact, with the definitions \eqref{eq:A}, \eqref{eq:b}, and \eqref{eq:AN}, the optimal weights $w$ for this model again satisfy
\begin{align*}
   A_{N}\,w~=~b_{N}.
\end{align*}

In the limit $N\to\infty $, using the same technique as in Section \ref{sec:asymp}, we obtain
\begin{align*}
   A\,w~=~b,
\end{align*}
with
\begin{align*}
  A_{\lambda\nu}~&=~ \left\{
                       \begin{array}{cl}
                         1, & \hbox{if $\lambda=\nu $,} \\
                         \big\langle\, d(\rho_{\lambda})\, d(\rho_{\nu}) \,\big\rangle , & \hbox{if $\lambda\not=\nu $,}
                       \end{array}
                     \right.
  \\
  b_{\nu}~&=~\big(\langle \om_{1}(\rho_{\nu})\rangle-\langle m_{1}(\rho_{\nu})d(\rho_{\nu}) \rangle\big)\,\alpha_{\nu}\;+\;\sum_{\lambda=1}^{M}\,\big\langle\, m_{1}(\rho_{\lambda})\, d(\rho_{\nu}))\, \big\rangle\,\alpha_{\lambda}.
\end{align*}
As in Proposition \ref{prop:posdef}, it is easy to see, that the matrix $A$ is positive semi-definite. Moreover, we show

\begin{thm}\label{thm:extpos}
 The matrix $A$ is positive definite, and hence invertible, if  $\big\langle\, d(\rho_{\lambda})^{2} \,\big\rangle<1$ for all but possibly one $\lambda$.
\end{thm}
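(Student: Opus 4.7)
The plan is to mimic the proof of Proposition \ref{prop:posdef} but work directly with the limiting matrix $A$, exploiting a decomposition that separates the diagonal defect from a Gram matrix. Specifically, I would write $A = D + G$, where $D$ is the diagonal matrix with entries $D_{\lambda\lambda} = 1 - \langle d(\rho_\lambda)^2 \rangle$ and $G$ is the matrix with entries $G_{\lambda\nu} = \langle d(\rho_\lambda)\,d(\rho_\nu) \rangle$. A quick check on the diagonal and off-diagonal entries confirms this decomposition. Since $|d(\rho_\lambda)(z)| \leq 1$ for every $z$, the diagonal matrix $D$ is positive semi-definite; and $G$ is the Gram matrix of the functions $d(\rho_1),\ldots,d(\rho_M)$ viewed as elements of $L^2(\mu)$, hence also positive semi-definite.

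For any $x \in \mathbb{R}^M$ this gives the explicit expression
\begin{align*}
   (x, A x)~=~\sum_{\lambda=1}^{M} x_\lambda^2\,\big(1 - \langle d(\rho_\lambda)^2 \rangle\big)\;+\;\Big\langle \Big(\sum_{\lambda=1}^{M} x_\lambda\, d(\rho_\lambda)\Big)^{2}\Big\rangle,
\end{align*}
both summands being non-negative. Suppose $(x,Ax)=0$. Then each summand must vanish individually. The vanishing of the first summand, combined with the hypothesis that $\langle d(\rho_\lambda)^2 \rangle < 1$ holds for all $\lambda$ except possibly one index $\lambda_0$, forces $x_\lambda = 0$ for every $\lambda \neq \lambda_0$.

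It remains to handle the possibly exceptional coordinate $\lambda_0$. If no such $\lambda_0$ exists, we are already done. Otherwise, the second summand reduces to $x_{\lambda_0}^2\,\langle d(\rho_{\lambda_0})^2 \rangle$, which must equal $0$. Since $\langle d(\rho_{\lambda_0})^2 \rangle = 1$ in the only interesting case (otherwise the first summand already forced $x_{\lambda_0}=0$), this yields $x_{\lambda_0} = 0$ as well, so $x = 0$ and $A$ is positive definite. I expect no real obstacle here: the proof is essentially a one-line linear-algebra decomposition once one notices that $A$ differs from a Gram matrix only by a non-negative diagonal correction, and the hypothesis $\langle d(\rho_\lambda)^2 \rangle < 1$ for all but one $\lambda$ is exactly the condition needed to rule out a nontrivial common kernel of $D$ and $G$.
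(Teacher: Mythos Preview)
Your proof is correct and is essentially identical to the paper's own argument: both use the decomposition $(x,Ax) = \sum_\lambda (1-\langle d(\rho_\lambda)^2\rangle)\,x_\lambda^2 + \big\langle (\sum_\lambda x_\lambda\, d(\rho_\lambda))^2\big\rangle$, observe that both summands are non-negative, and then argue that if the first vanishes the hypothesis forces all coordinates except possibly $x_{\lambda_0}$ to be zero, whereupon the second summand kills $x_{\lambda_0}$ as well. The only cosmetic difference is that you phrase the decomposition explicitly as $A = D + G$ with $G$ a Gram matrix, while the paper writes out the quadratic form directly.
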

\begin{proof}
  For $x\in\IR^{M}$, we compute
\begin{align}
   \big(x,A\,x\big)~&=~\sum_{\nu=1}^{M} \big(1-\big\langle\, d(\rho_{\nu})^{2} \,\big\rangle\big)\,x_{\nu}^{2}\;+\;
    \sum_{\nu,\lambda=1}^{M} \big\langle\, d(\rho_{\nu})\, d(\rho_{\lambda}) \,\big\rangle\,x_{\nu}\,x_{\lambda}\notag\\
&=~\sum_{\nu=1}^{M} \big(1-\big\langle\, d(\rho_{\nu})^{2} \,\big\rangle\big)\,x_{\nu}^{2}\;
+\;\left\langle \left(\sum_{\nu=1}^M x_{\nu} d(\rho_{\nu})\right)^{2} \right\rangle\label{eq:posdef}
\end{align}
Both terms in \eqref{eq:posdef} are non-negative. If  $\big\langle\, d(\rho_{\nu})^{2} \,\big\rangle<1$ for \emph{all} $\nu $, the first sum in \eqref{eq:posdef} is strictly positive for $x\not=0 $, hence $A$ is positive definite in this case. If all but one $\nu^*$ have this property, the conclusion follows from considering that the first summand is 0 if and only if $x_{\nu^*}$ is the only coordinate of $x$ different than 0. But in that case, the second summand is positive.

\end{proof}

We have two partial converses to Theorem \ref{thm:extpos}.

\begin{prop}
   If $|\,\langle \, d(\rho_{\nu})\,d(\rho_{\lambda})\,\rangle\,|=1 $ for some $\nu\not=\lambda $, then
   the matrix $A$ is \emph{not} invertible.
\end{prop}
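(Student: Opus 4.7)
The plan is to produce an explicit nonzero vector in the kernel of $A$ using the equality case of the Cauchy--Schwarz inequality together with the quadratic form representation \eqref{eq:posdef} established in the proof of Theorem \ref{thm:extpos}.

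First, I would observe that $|d(\rho_\mu)|\leq 1$ pointwise for every $\mu$, since $d(\rho)=\rho^{z}(0,1]-\rho^{z}[-1,0)$ is the difference of two probabilities of disjoint sets. Applying Cauchy--Schwarz with respect to the measure $\mu$,
\begin{align*}
1 ~=~ \big|\langle d(\rho_{\nu})\,d(\rho_{\lambda})\rangle\big|^{2} ~\leq~ \langle d(\rho_{\nu})^{2}\rangle\,\langle d(\rho_{\lambda})^{2}\rangle ~\leq~ 1,
\end{align*}
so equality must hold throughout. This forces two conclusions: $\langle d(\rho_{\nu})^{2}\rangle=\langle d(\rho_{\lambda})^{2}\rangle=1$ (so $|d(\rho_{\nu})|=|d(\rho_{\lambda})|=1$ $\mu$-almost surely), and $d(\rho_{\nu})$ and $d(\rho_{\lambda})$ are linearly dependent in $L^{2}(\mu)$. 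Setting $\epsilon:=\mathrm{sgn}\,\langle d(\rho_{\nu})d(\rho_{\lambda})\rangle\in\{-1,+1\}$, this means $d(\rho_{\nu})=\epsilon\,d(\rho_{\lambda})$ $\mu$-almost surely.

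Next, I would take the nonzero vector $x:=e_{\nu}-\epsilon\,e_{\lambda}\in\IR^{M}$ (where $e_{\kappa}$ denotes the standard basis vectors) and evaluate the quadratic form using the identity \eqref{eq:posdef} from the proof of Theorem \ref{thm:extpos}:
\begin{align*}
(x,A\,x)~=~\sum_{\kappa=1}^{M}\bigl(1-\langle d(\rho_{\kappa})^{2}\rangle\bigr)\,x_{\kappa}^{2}\;+\;\left\langle\left(\sum_{\kappa=1}^{M} x_{\kappa}\,d(\rho_{\kappa})\right)^{2}\right\rangle.
\end{align*}
The first sum vanishes since the only nonzero coordinates of $x$ are at $\kappa=\nu$ and $\kappa=\lambda$, and the corresponding factors $1-\langle d(\rho_{\nu})^{2}\rangle$ and $1-\langle d(\rho_{\lambda})^{2}\rangle$ are both zero. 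The second term also vanishes, since $\sum_{\kappa}x_{\kappa}\,d(\rho_{\kappa})=d(\rho_{\nu})-\epsilon\,d(\rho_{\lambda})=0$ $\mu$-almost surely. Hence $(x,Ax)=0$ for the nonzero vector $x$.

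Finally, since $A$ is positive semi-definite (established in the proof of Theorem \ref{thm:extpos}), the relation $(x,Ax)=0$ implies $Ax=0$; therefore $x\in\ker A\setminus\{0\}$ and $A$ is not invertible. The only mild subtlety is confirming that equality in Cauchy--Schwarz yields pointwise (a.s.) proportionality with the sign given by the sign of the inner product, but this is standard once we know both factors have $L^{2}(\mu)$-norm equal to $1$; the rest of the argument is direct substitution into \eqref{eq:posdef}.
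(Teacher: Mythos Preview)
Your proof is correct and follows essentially the same route as the paper: both arguments use Cauchy--Schwarz to force $\langle d(\rho_{\nu})^{2}\rangle=\langle d(\rho_{\lambda})^{2}\rangle=1$, and both then exhibit singularity via a vector supported on $\{\nu,\lambda\}$. The only cosmetic difference is that the paper phrases this last step as ``the $2\times 2$ principal submatrix indexed by $\nu,\lambda$ is singular, hence $A$ is singular,'' while you work out the equality case of Cauchy--Schwarz explicitly and plug the vector $e_{\nu}-\epsilon e_{\lambda}$ into the quadratic form decomposition \eqref{eq:posdef}; your version is more self-contained but mathematically the same.
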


\begin{proof}
   If $|\,\langle \, d(\rho_{\nu})\,d(\rho_{\lambda})\,\rangle\,|=1 $, then an application of the 
   Cauchy-Schwarz inequality shows that $\langle \, d(\rho_{\nu})^{2}\,\rangle = \langle\,d(\rho_{\lambda})^{2}\,\rangle=1 $.
      This implies that the matrix
   \begin{align*}
      \begin{pmatrix} \langle \, d(\rho_{\nu})^{2}\,\rangle & \langle \, d(\rho_{\nu})\,d(\rho_{\lambda})\,\rangle\\
      \langle \, d(\rho_{\nu})\,d(\rho_{\lambda})\,\rangle & \langle\,d(\rho_{\lambda})^{2}\,\rangle
      \end{pmatrix}
   \end{align*}
   is not invertible, hence $A $ is not invertible.
\end{proof}

\begin{prop}
Let for $\mu$-almost all $z$ $\textup{sgn }d\left(\rho_{\lambda}\right)=\textup{sgn } d\left(\rho_{\nu}\right), \lambda,\nu=1,\ldots,M$. Then the matrix $A$ being positive definite implies
$\big\langle\,d(\rho_{\lambda})^{2}\,\big\rangle<1$ for all but possibly
one $\lambda$.
\end{prop}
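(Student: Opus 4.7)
I would proceed by contrapositive, using identity \eqref{eq:posdef} from the proof of Theorem \ref{thm:extpos}. Concretely, I would assume that there exist two distinct indices $\lambda_{1}\neq \lambda_{2}$ with $\langle d(\rho_{\lambda_{1}})^{2}\rangle = \langle d(\rho_{\lambda_{2}})^{2}\rangle = 1$, and exhibit a nonzero vector $x\in\IR^{M}$ with $(x,Ax)=0$, contradicting positive definiteness of $A$.

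The first step is to observe that since $|d(\rho_{\lambda})(z)|\leq 1$ for all $z$, the identity $\langle d(\rho_{\lambda_{i}})^{2}\rangle = 1$ forces $|d(\rho_{\lambda_{i}})(z)|=1$ for $\mu$-almost all $z$, for $i=1,2$. Combined with the assumption that $\mathrm{sgn}\,d(\rho_{\lambda_{1}})(z)=\mathrm{sgn}\,d(\rho_{\lambda_{2}})(z)$ for $\mu$-almost all $z$, this yields
\[
d(\rho_{\lambda_{1}})(z)~=~d(\rho_{\lambda_{2}})(z) \quad\text{for $\mu$-almost all } z.
\]

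The second step is to plug the test vector $x:=e_{\lambda_{1}}-e_{\lambda_{2}}$ into \eqref{eq:posdef}. The first sum $\sum_{\nu}(1-\langle d(\rho_{\nu})^{2}\rangle)x_{\nu}^{2}$ only picks up the indices $\nu=\lambda_{1},\lambda_{2}$, and both coefficients vanish by assumption. The second term is
\[
\left\langle\left(d(\rho_{\lambda_{1}})-d(\rho_{\lambda_{2}})\right)^{2}\right\rangle,
\]
which vanishes by the $\mu$-a.s.\ equality just established. Hence $(x,Ax)=0$ with $x\neq 0$, contradicting positive definiteness. This shows at most one index $\lambda$ can satisfy $\langle d(\rho_{\lambda})^{2}\rangle = 1$, which is the claim. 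The only subtlety — and the place where the common-sign hypothesis is essential — is the passage from $|d(\rho_{\lambda_{i}})|=1$ $\mu$-a.s.\ to $d(\rho_{\lambda_{1}})=d(\rho_{\lambda_{2}})$ $\mu$-a.s.; without this hypothesis, the two functions could differ in sign on a set of positive $\mu$-measure and the argument would break down.
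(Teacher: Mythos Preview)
Your proof is correct and follows essentially the same approach as the paper's: both argue by contrapositive, take the test vector $x=e_{\lambda_1}-e_{\lambda_2}$, and show $(x,Ax)=0$ via \eqref{eq:posdef} using that $|d(\rho_{\lambda_i})|=1$ $\mu$-a.s.\ together with the common-sign hypothesis. The only cosmetic difference is that the paper expands $\langle(d(\rho_{\lambda_1})-d(\rho_{\lambda_2}))^2\rangle$ and argues $\langle d(\rho_{\lambda_1})d(\rho_{\lambda_2})\rangle=1$, whereas you directly conclude $d(\rho_{\lambda_1})=d(\rho_{\lambda_2})$ $\mu$-a.s.\ and hence the squared difference vanishes.
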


\begin{rem}
Additive CBMs satisfy the condition of all $d\left(\rho_{\lambda}\right)$
having the same sign for $\mu$-almost all $z$ owing to the symmetry
of each measure $\rho_{\lambda}$. Multiplicative CBMs have this property
if we assume a certain asymmetry for each $\rho_{\lambda}$: If, for all
$\lambda$, $\rho_{\lambda}\left(0,1\right]>1/2$, or, for all $\lambda$, $\rho_{\lambda}\left(0,1\right]<1/2$,
then the condition holds. This can be interpreted as all group bias
modifiers tending to reinforce the global bias, or, to the contrary,
all tending to go against the global bias. Note, however, that this
is a far weaker tendency than required by tight correlation.
\end{rem}

\begin{proof}
Assume there are two distinct indices $\lambda_{1}$ and $\lambda_{2}$ such
that $\langle\,d(\rho_{\lambda_{1}})^{2}\,\rangle,\langle\,d(\rho_{\lambda_{2}})^{2}\,\rangle=1$.
We show that $A$ is not positive definite. Define an $x\in\mathbb{R}^{M}$
by setting $x_{\lambda_{1}}=1,x_{\lambda_{2}}=-1$, and all other entries equal
to 0. We calculate 
\begin{align*}
\big(x,A\,x\big)~ & =~\big(1-\big\langle\,d(\rho_{\lambda_{1}})^{2}\,\big\rangle\big)\,+\big(1-\big\langle\,d(\rho_{\lambda_{2}})^{2}\,\big\rangle\big)\;+\;\left\langle\big(d(\rho_{\lambda_{1}})-d(\rho_{\lambda_{2}})\big)^{2}\right\rangle\\
 & =\left\langle d(\rho_{\lambda_{1}})^{2}\right\rangle -2\left\langle d(\rho_{\lambda_{1}})d(\rho_{\lambda_{2}})\right\rangle +\left\langle d(\rho_{\lambda_{2}})^{2}\right\rangle 
\end{align*}
By assumption, $d(\rho_{\lambda_{1}})^{2}$ and $d(\rho_{\lambda_{2}})^{2}$
are 1 $\mu$-almost surely. Hence, $d(\rho_{\lambda_{1}})$ and $d(\rho_{\lambda_{2}})$
are 1 in absolute value and they have the same sign $\mu$-almost
surely. Thus, the term $\big\langle d(\rho_{\lambda_{1}})d(\rho_{\lambda_{2}})\big\rangle$
equals 1.
\end{proof}
We will now consider a scenario in which there are two clusters of
groups -- think of them as parts of the overall population that tend
to vote together. While voters within clusters tend to hold the same
opinion, we will assume that there is antagonism between the two clusters.

Let groups $1,\ldots,M_{1}$ belong to cluster $C_{1}$ and $M_{1}+1,\ldots,M$
to cluster $C_{2}$. We set $M_{2}:=M-M_{1}$. The fraction of the
overall population belonging to groups in each cluster will be called
$\eta_{i}:=\sum_{\lambda\in C_{i}}\alpha_{\lambda},i=1,2$. The conditional
distributions $\rho_{\lambda}^{z}$ are identical within each cluster:
In $C_{i}$, all groups follow $\rho_{i}^{z}$. To obtain antagonistic
behaviour, we will assume that 
\begin{equation}
\rho_{1}^{z}=\rho_{2}^{-z}\label{eq:antisymmetry}
\end{equation}
holds for all $z$.

Now we have to distinguish the quantities
\[
r_{ij}:=\left\langle m_{1}\left(\rho_{i}\right)d\left(\rho_{j}\right)\right\rangle ,\quad m_{i}:=\left\langle \overline{m}_{1}\left(\rho_{i}\right)\right\rangle ,\quad i,j=1,2.
\]

However, due to the antisymmetry condition (\ref{eq:antisymmetry}),
we have the following equalities:
\begin{lem}
\label{lem:m,r,i,j}Under the assumptions presented above, we have
\begin{align*}
r :=r_{ii}=-r_{ij}\quad and \quad m :=m_{i}
\end{align*}
for all $i,j=1,2,i\neq j$.
\end{lem}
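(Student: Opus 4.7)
The plan is to derive Lemma \ref{lem:m,r,i,j} directly from the antisymmetry condition \eqref{eq:antisymmetry} combined with the symmetry property of each $\rho_\lambda^z$ coming from Assumption \ref{ass:sym}. The key step is to understand how the three functionals $m_1$, $d$, and $\om_1$ of a kernel $\rho^z$ behave when we replace $z$ by $-z$, and then how they behave when we replace $\rho_1^z$ by $\rho_2^z$.

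First, I would record the following elementary identities, valid for any symmetric kernel $\rho^z$ (in the sense of Assumption \ref{ass:sym}, applied group by group): under $z\mapsto -z$, a change of variable $t\mapsto -t$ gives
\begin{align*}
m_1(\rho^{-z}) = -m_1(\rho^z),\qquad d(\rho^{-z}) = -d(\rho^z),\qquad \om_1(\rho^{-z}) = \om_1(\rho^z).
\end{align*}
The first two change sign because $t$ and $t\cdot\mathbf{1}_{(0,1]}(t)-t\cdot\mathbf{1}_{[-1,0)}(t)$ are odd functions, while the third is invariant because $|t|$ is even. Next, apply the antisymmetry condition $\rho_2^z=\rho_1^{-z}$ to conclude that, for every $z$,
\begin{align*}
m_1(\rho_2^z)=-m_1(\rho_1^z),\qquad d(\rho_2^z)=-d(\rho_1^z),\qquad \om_1(\rho_2^z)=\om_1(\rho_1^z).
\end{align*}

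With these three transformation rules in hand, the lemma is just a matter of substituting into the definitions $r_{ij}=\langle m_1(\rho_i)\,d(\rho_j)\rangle$ and $m_i=\langle \om_1(\rho_i)\rangle$, and integrating against $\mu$. For example, $r_{22}=\langle(-m_1(\rho_1))(-d(\rho_1))\rangle=r_{11}$, while $r_{12}=\langle m_1(\rho_1)\cdot(-d(\rho_1))\rangle=-r_{11}$, and similarly $r_{21}=-r_{11}$. Setting $r:=r_{11}$ yields $r_{ii}=r$ and $r_{ij}=-r$ for $i\neq j$. Likewise, $m_2=\langle\om_1(\rho_1)\rangle=m_1$, so we may set $m:=m_1$.

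No part of this argument is particularly difficult; the only thing to watch is that the sign flips really do come from the oddness of the integrands and not from any manipulation of the outer measure $\mu$, which is not being transformed. In particular, we do not need to invoke the symmetry of $\mu$ itself for this lemma, only the per-group symmetry of $\rho_\lambda^z$ in the sense of Assumption \ref{ass:sym} together with the explicit antisymmetry relation \eqref{eq:antisymmetry}. The main obstacle, such as it is, is simply bookkeeping: keeping straight which factor carries the minus sign in each of the four cases $(i,j)\in\{1,2\}^2$.
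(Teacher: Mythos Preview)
Your proposal is correct and is precisely the short calculation the paper has in mind; the paper itself simply states ``We omit the short calculation that yields the result'' without giving any details. Your derivation of the sign rules for $m_1$, $d$, and $\om_1$ under $z\mapsto -z$ via Assumption~\ref{ass:sym}, followed by the antisymmetry relation $\rho_2^z=\rho_1^{-z}$, is the natural route and the bookkeeping is clean.
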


\begin{proof}
We omit the short calculation that yields the result.
\end{proof}
The covariance matrix $A$ has block form
\[
A=
\left(\begin{array}{cc}
A_{1} & B\\
B^{T} & A_{2}
\end{array}\right),
\]
where the $A_{i}$ are the covariance matrices of the groups belonging
to cluster $i$. They have the form we know from \eqref{eq:Ainf},
i.e. diagonal entries equal 1 and off-diagonal entries $0<a<1$. Of
course, $A_{i}\in\mathbb{R}^{M_{i}\times M_{i}}$. The matrix $B$
holds the covariances between groups of different clusters. Due to
(\ref{eq:antisymmetry}), all entries of $B$ are equal to $-a$.

We invert $A$ and obtain
\[
\left(A^{-1}\right)_{\lambda\nu}=\frac{1}{D}\begin{cases}
1+\left(M-2\right)a, & \lambda=\nu,\\
-a, & \lambda,\nu\in C_{i},\quad i=1,2,\quad\lambda\neq\nu,\\
a, & \lambda\in C_{i},\quad\nu\in C_{j},\quad i,j=1,2,\quad i\neq j,
\end{cases}
\]
where $D=\left(1-a\right)\left(1+\left(M-1\right)a\right)$. Note
that the entries within clusters are identical to those given in \eqref{eq:Ainv} for the model with identical conditional distributions $\rho_{\lambda}^{z}$.

Using Lemma \ref{lem:m,r,i,j}, we calculate the entries $\lambda\in C_i,i=1,2,$
of $b$:
\[
b_{\lambda}=\left(m-r\right)\alpha_{\lambda} + r\left(\eta_i - \eta_j\right).
\]
In the formula above, the index $j$ is the cluster $\lambda$ does \emph{not} belong to.
Now a lengthy but straightforward calculation yields the optimal
weights for each group $\nu$:
\begin{thm}
Let $\lambda$ be a group in cluster $i=1,2$ and let $j$ be the other cluster. Then the optimal weight of group $\lambda$ is given by
\begin{align*}
w_{\lambda} =\left(A^{-1}b\right)_{\lambda} = D_{1}\alpha_{\lambda}+D_{2},
\end{align*}
where the coefficients are
\begin{align*}
D_1  =\frac{m-r}{1-a}\quad and \quad  D_2  =\frac{\left(r-am\right)\left(\eta_{i}-\eta_{j}\right)}{\left(1-a\right)\left(1+\left(M-1\right)a\right)}.
\end{align*}

\end{thm}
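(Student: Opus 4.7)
The proof is essentially a direct computation: apply the explicit inverse $A^{-1}$ given just before the statement to the vector $b$ whose components were derived using Lemma \ref{lem:m,r,i,j}. The plan is to split the matrix-vector product $(A^{-1}b)_\lambda=\sum_{\nu=1}^M (A^{-1})_{\lambda\nu}\,b_\nu$ according to the cluster structure into three pieces: $\nu=\lambda$, $\nu\in C_i\setminus\{\lambda\}$, and $\nu\in C_j$, where $\lambda\in C_i$. Within each piece, the entries of $A^{-1}$ are constant (up to the common factor $1/D$), which allows us to collect the sums of $\alpha_\nu$ over each cluster into $\eta_i$ and $\eta_j$. A key simplification will be the identity $\eta_i+\eta_j=1$ inherited from $\sum_\nu\alpha_\nu=1$, which lets us replace $\eta_j$ by $1-\eta_i$ when needed, and, importantly, observe that for $\nu\in C_j$ the value of $b_\nu$ is $(m-r)\alpha_\nu - r(\eta_i-\eta_j)$ rather than $(m-r)\alpha_\nu + r(\eta_i-\eta_j)$; this sign flip interacts with the corresponding sign flip in $(A^{-1})_{\lambda\nu}$ (which is $+a/D$ across clusters and $-a/D$ within a cluster) and is what drives the final answer.

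Concretely, I would first isolate the coefficient of $(m-r)$ in $w_\lambda$. After collecting, that coefficient equals
\begin{equation*}
\tfrac{1}{D}\bigl[(1+(M-2)a)\alpha_\lambda - a(\eta_i-\alpha_\lambda) + a\eta_j\bigr]
= \tfrac{1}{D}\bigl[(1+(M-1)a)\alpha_\lambda + a(\eta_j-\eta_i)\bigr],
\end{equation*}
and I would then isolate the coefficient of $r(\eta_i-\eta_j)$. The latter is
\begin{equation*}
\tfrac{1}{D}\bigl[(1+(M-2)a)-a(M_i-1)-aM_j\bigr]=\tfrac{1-a}{D}=\tfrac{1}{1+(M-1)a},
\end{equation*}
using $M_i+M_j=M$ and $D=(1-a)(1+(M-1)a)$.

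Combining these two contributions, the $\alpha_\lambda$-term becomes $\frac{(m-r)(1+(M-1)a)}{D}\alpha_\lambda=\frac{m-r}{1-a}\alpha_\lambda$, which is $D_1\alpha_\lambda$. The constant term (i.e.\! the part independent of $\alpha_\lambda$) combines the $a(\eta_j-\eta_i)(m-r)/D$ piece from the first coefficient with the $r(\eta_i-\eta_j)/(1+(M-1)a)$ piece from the second; pulling out $(\eta_i-\eta_j)/[(1-a)(1+(M-1)a)]$ and simplifying $-a(m-r)+r(1-a)=r-am$ yields exactly $D_2$. The main obstacle is purely bookkeeping: keeping the signs, the index $j\ne i$, and the distinction between within-cluster and cross-cluster entries of both $A^{-1}$ and $b$ consistent; once that is done, the algebra telescopes cleanly thanks to the factorization $D=(1-a)(1+(M-1)a)$ and the identity $\eta_i+\eta_j=1$.
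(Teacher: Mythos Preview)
Your proposal is correct and follows exactly the approach indicated by the paper, which merely states that ``a lengthy but straightforward calculation yields the optimal weights'' without giving details. Your decomposition into the three index ranges $\nu=\lambda$, $\nu\in C_i\setminus\{\lambda\}$, $\nu\in C_j$, your handling of the sign flip in $b_\nu$ across clusters, and the final simplification $-a(m-r)+r(1-a)=r-am$ are all accurate and constitute precisely the omitted computation.
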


$D_1$ is equal to the coefficient $C_1$ given in \eqref{eq:C1} for the model with identical conditional distributions. We note that if both clusters have exactly half the overall population,
then $D_{2}$ vanishes, and the optimal weights are proportional to
the population of each group. If the two clusters represent different
proportions of the overall population, then $w_{\lambda}$ is the sum
of a proportional term $D_{1}\alpha_{\lambda}$ and a constant $D_{2}$.
If $\lambda$ belongs to the larger of the two clusters, then $D_{2}$
has the same sign as the coefficient $C_{2}$ in \eqref{eq:C2} in the
identical conditional distribution model, and $D_{2}$ is a rescaled version of $C_2$ by the factor $\left(\eta_{i}-\eta_{j}\right)$.
If $\lambda$ belongs to the smaller of the two clusters, then $D_{2}$
has the opposite sign compared to $C_{2}$ in \eqref{eq:C2} and it is
once again rescaled.

\section{Conclusion}\label{sec:conclusion}

We have defined and analysed a multi-group version of the CBM which allows for correlated voting across group boundaries. This CBM was then applied to the problem of calculating the optimal weights in a two-tier voting system. By the term `optimal weights', we mean those council weights which minimise the democracy deficit, i.e.\! the expected quadratic deviation of the council vote from a hypothetical referendum over all possible issues which can be voted on. The main findings in this paper are:

\begin{itemize}
\item{We determined the asymptotic behaviour of the CBM in Theorem \ref{thm:convthm}. The theorem states that the global bias measure $\mu$ and the group bias measure $\rho$ describe the limiting distribution of the normalised voting margins.}
\item{We distinguished the tightly correlated case from its complement. We characterised tight correlation in Proposition \ref{prop:tight} in terms of the bias measures. Tight correlation means intuitively that there is perfect positive correlation between the different group votes in the council. This leads to non-unique optimal weights, as the assignation of weights does not matter if all groups vote alike anyway. We gave a sufficient condition for non-tight correlation between groups called `sufficient randomness'. This criterion states that all possible council votes occur with positive probability.}
\item{In the non-tightly correlated case, we showed that the optimal council weights are uniquely determined in Theorem \ref{thm:weights}. The optimal weights are given by the sum of a constant term equal for all groups and a summand which is proportional to each group's population as stated in Theorem \ref{thm:asweights}.}
\item{We analysed the optimal weights' properties and showed that there are cases in which these weights are negative for the smallest groups. This is due to the fact that while the coefficient of the group's size is positive, the constant term can have any sign, depending on the bias measures. We gave sufficient conditions for the non-negativity of the optimal weights as well as examples in which the weights are negative in Section \ref{sec:nonneg}.}
\end{itemize}

\section*{Appendix}

\subsection*{Proof of Proposition \ref{prop:mu_rho_discr}}

We prove the claim for $n=2k+1$. The case of even $n$ can be shown
analogously. We prove by induction on $k$ that
\begin{equation}
a\left(\mu\right)\leq\frac{2k\left(2k+2\right)}{3\left(2k+1\right)^{2}}, \label{eq:a_discr_inequ}
\end{equation}
 with equality if $\mu$ is chosen to be the uniform distribution
on the $2k+1$ points conforming the support of $\mu$.

\uline{Base case:} Let $k=1$. Then the support of $\mu$ consists
of three points: 0 and two points $-x_{1},x_{1}$ such that $0<x_{1}\leq1/2$.
The measure $\mu$ is given by $\beta_{0}\delta_{0}+\beta_{1}\left(\delta_{-x_{1}}+\delta_{x_{1}}\right)$
and the constants satisfy $\beta_{0}+2\beta_{1}=1$. Set $\beta:=\beta_{1}$.
To show the upper bound (\ref{eq:a_discr_inequ}), we solve the maximisation
problem $\max_{\beta}a\left(\mu\right)$. The first order condition
is
\[
\left(1-\beta\right)^{2}-2\beta\left(1-\beta\right)=0,
\]
which has two solutions: $\beta=1$ and $\beta=1/3$. The second order
condition shows that $\beta=1$ minimises $a\left(\mu\right)$ and
$\beta=1/3$ maximises it. So, for $k=1$, the uniform distribution
maximises $a\left(\mu\right)$ and, for the uniform distribution $\mu_{3}$
on $\left\{ -x_{1},0,x_{1}\right\} $,
\[
a\left(\mu_{3}\right)=\frac{8}{27}=\frac{2k\left(2k+2\right)}{3\left(2k+1\right)^{2}},
\]
and the upper bound (\ref{eq:a_discr_inequ}) holds with equality.

\uline{Induction step:} Assume that for some $k\in\mathbb{N}$
and all sets $\left\{ -x_{k},\ldots,0,\ldots,x_{k}\right\} ,0<x_{1}<\cdots<x_{k}\leq1/2,$
the uniform distribution $\mu_{2k+1}$ maximises $a\left(\mu\right)$
and $a\left(\mu_{2k+1}\right)=\frac{2k\left(2k+2\right)}{3\left(2k+1\right)^{2}}$.
We add another point $1/2\geq x_{k+1}>x_{k}$ (if $x_{k}=1/2$, then
relabel the last two points) with probability $1\geq\eta\geq0$ and
solve the maximisation problem
\[
\max_{\mu,\eta}\;a\left(\left(1-2\eta\right)\mu+\eta\left(\delta_{-x_{k+1}}+\delta_{x_{k+1}}\right)\right),
\]
where $\mu$ is any symmetric probability measure on $\left\{ -x_{k},\ldots,0,\ldots,x_{k}\right\} $.
Set $\nu:=\left(1-2\eta\right)\mu$\\$+\,\eta\left(\delta_{-x_{k+1}}+\delta_{x_{k+1}}\right)$
and we calculate
\begin{align*}
 & \qquad a\left(\nu\right)/2=\int_{\left(0,1/2\right]}\left(\nu\left(-z,z\right]\right)^{2}\nu\left(\textup{d}z\right)\\
 & =\int_{\left(0,x_{k}\right]}\left(\left(1-2\eta\right)\mu\left(-z,z\right]\right)^{2}\left(1-2\eta\right)\mu\left(\textup{d}z\right)+\int_{\left(0,x_{k}\right]}\left(\left(1-2\eta\right)\mu\left(-z,z\right]\right)^{2}\eta\delta_{x_{k+1}}\left(\textup{d}z\right)\\
 & \quad+\int_{\left(x_{k},1/2\right]}\left(\left(1-2\eta\right)+\eta\delta_{x_{k+1}}\left(-z,z\right]\right)^{2}\left(1-2\eta\right)\mu\left(\textup{d}z\right)+\int_{\left(x_{k},1/2\right]}\left(\left(1-2\eta\right)+\eta\delta_{x_{k+1}}\left(-z,z\right]\right)^{2}\eta\delta_{x_{k+1}}\left(\textup{d}z\right).
\end{align*}
The second summand is 0 because $\delta_{x_{k+1}}\left(0,x_{k}\right]=0$.
The third summand is 0 due to $\mu\left(x_{k},1/2\right]=0$. We continue
\begin{align*}
 & \qquad a\left(\nu\right)/2=\\
 & =\int_{\left(0,x_{k}\right]}\left(\left(1-2\eta\right)\mu\left(-z,z\right]\right)^{2}\left(1-2\eta\right)\mu\left(\textup{d}z\right)+\int_{\left(x_{k},1/2\right]}\left(\left(1-2\eta\right)+\eta\delta_{x_{k+1}}\left(-z,z\right]\right)^{2}\eta\delta_{x_{k+1}}\left(\textup{d}z\right)\\
 & =\left(1-2\eta\right)^{3}\int_{\left(0,1/2\right]}\left(\mu\left(-z,z\right]\right)^{2}\mu\left(\textup{d}z\right)+\eta\left(\left(1-2\eta\right)+\eta\delta_{x_{k+1}}\left(-x_{k+1},x_{k+1}\right]\right)^{2}\\
 & =\left(1-2\eta\right)^{3}a\left(\mu\right)/2+\eta\left(1-\eta\right)^{2}.
\end{align*}
As we see, $\mu$ and $\eta$ can be chosen independently of each
other to maximise $a\left(\nu\right)$. By assumption, the maximising
$\mu$ is the uniform distribution on $\left\{ -x_{k},\ldots,0,\ldots,x_{k}\right\} $
$\mu_{2k+1}$. Hence,
\begin{align*}
\max_{\nu}\;a\left(\nu\right) & =\max_{\eta}\;\left(1-2\eta\right)^{3}a\left(\mu_{2k+1}\right)+2\eta\left(1-\eta\right)^{2}.
\end{align*}
Since $a\left(\mu_{2k+1}\right)$ is independent of the choice of
$\eta$, the first order condition is
\[
3\left(1-2\eta\right)^{2}a\left(\mu_{2k+1}\right)=\left(1-\eta\right)\left(1-3\eta\right).
\]
The solutions of this quadratic equation are
\[
\eta=\frac{1}{3}\frac{2-3a\left(\mu_{2k+1}\right)}{1-2a\left(\mu_{2k+1}\right)}\pm\frac{1}{3}\sqrt{\left(\frac{2-3a\left(\mu_{2k+1}\right)}{1-2a\left(\mu_{2k+1}\right)}\right)^{2}-\frac{3}{2}\frac{2-3a\left(\mu_{2k+1}\right)}{1-2a\left(\mu_{2k+1}\right)}}.
\]
By substituting $a\left(\mu_{2k+1}\right)=\frac{2k\left(2k+2\right)}{3\left(2k+1\right)^{2}}$,
we see that the root with the negative sign gives a negative $\eta$.
The positive root is $\eta=\frac{1}{2k+3}$. This implies that the
maximising measure $\nu$ on $\left\{ -x_{k+1},\ldots,0,\ldots,x_{k+1}\right\} $
is the uniform distribution $\mu_{2(k+1)+1}$. This concludes the
proof by induction that for finitely many points in the support, the
uniform distribution maximises $a$ and this maximum is given by the
upper bound in (\ref{eq:a_discr_inequ}).

Next we show that for discrete measures with infinite support the
upper bound $1/3$ holds as well. If $\left|\textup{supp }\mu\right|=\infty$,
then $\mu$ is of the form $\beta_{0}\delta_{0}+\sum_{i=1}^{\infty}\beta_{i}\left(\delta_{-x_{i}}+\delta_{x_{i}}\right)$,
where $x_{i}>0$ for all $i\in\mathbb{N}$. Set for each $n\in\mathbb{N}$
$\nu_{n}:=\beta_{0}\delta_{0}+\sum_{i=1}^{n}\beta_{i}\left(\delta_{-x_{i}}+\delta_{x_{i}}\right)$.
To obtain a contradiction, suppose that $a\left(\mu\right)>1/3$ and
set $\tau:=a\left(\mu\right)-1/3>0$. The sequence $\left(a\left(\nu_{n}\right)\right)_{n}$
is monotonically increasing:
\begin{align*}
a\left(\nu_{n}\right) & =2\int_{\left(0,1/2\right]}\left(\nu_{n}\left(-z,z\right]\right)^{2}\nu_{n}\left(\textup{d}z\right)\leq2\int_{\left(0,1/2\right]\cap\left\{ x_{1},\ldots,x_{n}\right\} }\left(\nu_{n+1}\left(-z,z\right]\right)^{2}\nu_{n}\left(\textup{d}z\right)\\
 & \quad+2\left(\nu_{n+1}\left(-x_{n+1},x_{n+1}\right]\right)^{2}\beta_{n+1}=a\left(\nu_{n+1}\right).
\end{align*}
For any $\varepsilon>0$, there is some $m\in\mathbb{N}$ such that
for all measurable sets $A\subset\left[-1/2,1/2\right]$ the inequality
$\mu A-\nu_{m}A<\varepsilon$ holds. So
\begin{align*}
a\left(\nu_{m}\right) & =2\int_{\left(0,1/2\right]}\left(\nu_{m}\left(-z,z\right]\right)^{2}\nu_{m}\left(\textup{d}z\right)>2\int_{\left(0,1/2\right]}\left(\mu\left(-z,z\right]-\varepsilon\right)^{2}\left(\mu-\varepsilon\right)\left(\textup{d}z\right)\\
 & =a\left(\mu\right)-2\varepsilon\int_{\left(0,1/2\right]}\left(\mu\left(-z,z\right]\right)^{2}\textup{d}z-4\varepsilon\int_{\left(0,1/2\right]}\mu\left(-z,z\right]\mu\left(\textup{d}z\right)\\
 & \quad+4\varepsilon^{2}\int_{\left(0,1/2\right]}\mu\left(-z,z\right]\textup{d}z+2\varepsilon^{2}\int_{\left(0,1/2\right]}\mu\left(\textup{d}z\right)-2\varepsilon^{3}\int_{\left(0,1/2\right]}\textup{d}z.
\end{align*}
By letting $\varepsilon$ go to 0, we see that $a\left(\nu_{n}\right)\nearrow a\left(\mu\right)$
and there is an $n\in\mathbb{N}$ such that $a\left(\nu_{n}\right)>a\left(\mu\right)-\tau/2>1/3$.
This is a contradiction because the cardinality $\left|\textup{supp }\nu_{n}\right|$
equals $2n+1$ and therefore $a\left(\nu_{n}\right)\leq1/3$.

Next we note that $\sup a\left(\mu\right)$ over all discrete probability
measures $\mu$ is $1/3$. This is easy to see because of the following
facts:
\begin{lem}
The sequences $\left(\frac{\left(n-2\right)\left(n+2\right)}{3n^{2}}\right)_{n\textup{ even}}$
and $\left(\frac{\left(n-1\right)\left(n+1\right)}{3n^{2}}\right)_{n\textup{ odd}}$
are monotonically increasing and their limit is equal to $1/3$.
\end{lem}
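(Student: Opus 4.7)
The plan is to rewrite each term in closed form so that monotonicity and the limit become transparent. For the even-indexed sequence, I would expand
\[
\frac{(n-2)(n+2)}{3n^{2}}~=~\frac{n^{2}-4}{3n^{2}}~=~\frac{1}{3}\;-\;\frac{4}{3n^{2}},
\]
and for the odd-indexed sequence,
\[
\frac{(n-1)(n+1)}{3n^{2}}~=~\frac{n^{2}-1}{3n^{2}}~=~\frac{1}{3}\;-\;\frac{1}{3n^{2}}.
\]

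From this representation both claims are immediate. The map $n \mapsto \frac{c}{3n^{2}}$ is strictly decreasing in $n$ for any constant $c>0$, so $\frac{1}{3}-\frac{c}{3n^{2}}$ is strictly increasing in $n$; applying this to $c=4$ along even $n$ and to $c=1$ along odd $n$ establishes monotonicity of both subsequences. For the limit, $\frac{4}{3n^{2}}\to 0$ and $\frac{1}{3n^{2}}\to 0$ as $n\to\infty$, so both sequences converge to $\frac{1}{3}$.

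There is no real obstacle here: this is a one-line algebraic identity followed by the observation that $1/n^{2}\to 0$. The only thing to be mildly careful about is that the first sequence is indexed along even $n$ and the second along odd $n$, so strictly speaking one should note that restricting an increasing sequence in $n$ to any infinite subset of indices preserves both the monotonicity (along that subset) and the limit.
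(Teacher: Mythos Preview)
Your proof is correct; the paper states this lemma without proof, treating it as self-evident, and your argument via the rewriting $\frac{1}{3}-\frac{c}{3n^{2}}$ is exactly the kind of one-line verification that justifies omitting it.
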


As we have proved, for uniform distributions $\mu_{n}$, $a\left(\mu_{n}\right)$
is equal to one of these expressions depending on the parity of $n$.
From this lemma, it follows that by choosing a discrete uniform distribution
on either an even- or odd-numbered support we can get arbitrarily
close to $1/3$. This concludes the proof of Proposition \ref{prop:mu_rho_discr}.

Next we prove the result for continuous measures.

\subsection*{Proof of Proposition \ref{prop:mu_rho_cont}}

Let $\mu\in\mathcal{M}_{1}\left(\left[-1/2,1/2\right]\right)$ have
no atoms. We show $a\left(\mu\right)=1/3$ by approximating $a\left(\mu\right)/2$
by a sum and then prove that the sum in question is a Riemann sum
of the function $x\mapsto4x^{2}$ on the interval $\left(0,1/2\right)$.

Let $\varepsilon>0$ be given. Then there is a partition $\mathcal{P}_{n}=\left(I_{1},\ldots,I_{n}\right)$
of $\left(0,1/2\right)$ with the property that for all $i=1,\ldots,n$
$\mu I_{i}<\varepsilon$. We assume the intervals are ordered from
left to right. It is possible to choose at most $n\leq\left\lceil 1/\varepsilon\right\rceil $
intervals for the partition $\mathcal{P}_{n}$. Then we define the
upper and lower sum
\begin{align*}
U\left(\mathcal{P}_{n}\right) & :=\sum_{i=1}^{n}\left(\sum_{j=1}^{i}2\mu I_{j}\right)^{2}\mu I_{i},\quad L\left(\mathcal{P}_{n}\right):=\sum_{i=1}^{n}\left(\sum_{j=1}^{i-1}2\mu I_{j}\right)^{2}\mu I_{i}.
\end{align*}
For each summand $i=1,\ldots,n$, we have
\begin{align}
\left|\left(\sum_{j=1}^{i}2\mu I_{j}\right)^{2}\mu I_{i}-\left(\sum_{j=1}^{i-1}2\mu I_{j}\right)^{2}\mu I_{i}\right| & \leq2\left|\sum_{j=1}^{i}2\mu I_{j}-\sum_{j=1}^{i-1}2\mu I_{j}\right|\mu I_{i}=2\cdot2\left(\mu I_{i}\right)^{2}<4\varepsilon^{2}.\label{eq:summand_diff}
\end{align}
In the inequality above, we used that for all $x,y\in\left[0,1\right]$
$\left|x^{2}-y^{2}\right|<2\left|x-y\right|$ holds. Also for all
$i=1,\ldots,n$ and all $z\in I_{i}$
\[
\sum_{j=1}^{i-1}2\mu I_{j}\leq\mu\left(-z,z\right]\leq\sum_{j=1}^{i}2\mu I_{j},
\]
and, therefore,
\begin{equation}
L\left(\mathcal{P}_{n}\right)\leq\int_{\left(0,1/2\right]}\left(\mu\left(-z,z\right]\right)^{2}\mu\left(\textup{d}z\right)\leq U\left(\mathcal{P}_{n}\right).\label{eq:sum_inequ}
\end{equation}
Due to (\ref{eq:summand_diff}) and (\ref{eq:sum_inequ}), we have
\begin{align*}
0 & \leq U\left(\mathcal{P}_{n}\right)-L\left(\mathcal{P}_{n}\right)\leq n\cdot4\varepsilon^{2}\leq\left\lceil 1/\varepsilon\right\rceil \cdot4\varepsilon^{2}\leq4\varepsilon\left(1+\varepsilon\right).
\end{align*}
This shows that the upper and lower sum approximate $a\left(\mu\right)/2$
well as we let the number of intervals in $\mathcal{P}_{n}$ go to
infinity.

The next step is to show $U\left(\mathcal{P}_{n}\right)$ is an upper
Riemann sum of the function $x\mapsto4x^{2}$. For the partition $\mathcal{P}_{n}$,
there is a corresponding partition $\mathcal{Q}_{n}=\left(J_{1},\ldots,J_{n}\right)$
in which the intervals are once again assumed to be ordered from left
to right and for each $i=1,\ldots,n$ the interval lengths $\left|J_{i}\right|$
equal $\mu I_{i}$. We define
\[
R\left(\mathcal{Q}_{n}\right):=\sum_{i=1}^{n}\sup_{x\in J_{i}}4x^{2}\cdot\left|J_{i}\right|.
\]
This is an upper Riemann sum of $x\mapsto4x^{2}$. On the other hand,
we have
\begin{align*}
R\left(\mathcal{Q}_{n}\right) & =\sum_{i=1}^{n}\left(2\sup_{x\in J_{i}}x\right)^{2}\left|J_{i}\right|=\sum_{i=1}^{n}\left(2\sup J_{i}\right)^{2}\left|J_{i}\right|=\sum_{i=1}^{n}\left(2\sum_{j=1}^{i}\left|J_{j}\right|\right)^{2}\left|J_{i}\right|\\
 & =\sum_{i=1}^{n}\left(\sum_{j=1}^{i}2\mu I_{j}\right)^{2}\mu I_{i}=U\left(\mathcal{P}_{n}\right)\searrow a\left(\mu\right)/2.
\end{align*}

Since $R\left(\mathcal{Q}_{n}\right)$ is an upper Riemann sum of
$x\mapsto4x^{2}$, $R\left(\mathcal{Q}_{n}\right)\searrow\int_{0}^{1/2}4x^{2}\textup{d}x=1/6$
holds as we let the number of intervals in the partition go to infinity
and we are done.

Finally, we show the case of a general probability measure.

\subsection*{Proof of Theorem \ref{thm:mu_rho}}

Let $\mu\in\mathcal{M}_{1}\left(\left[-1/2,1/2\right]\right)$ . We
can express $\mu$ as the sum of a discrete sub-probability measure
$\delta$ and a sub-probability measure $\gamma$ that has no atoms.
Both $\delta$ and $\gamma$ must satisfy the symmetry condition (\ref{eq:symmetry}).
Therefore, $\delta$ must have the form $\beta_{0}\delta_{0}+\sum_{i=1}^{\infty}\beta_{i}\left(\delta_{-x_{i}}+\delta_{x_{i}}\right)$.
Similarly to the proof of Proposition \ref{prop:mu_rho_discr}, we
truncate the sum to $\delta_{n}=\beta_{0}\delta_{0}+\sum_{i=1}^{n}\beta_{i}\left(\delta_{-x_{i}}+\delta_{x_{i}}\right)$
choosing $n$ large enough for a condition $\delta A-\delta_{n}A<\varepsilon$
to hold for all measurable sets $A$ and proceed with $\delta_{n}$
instead of $\delta$. Set $\nu:=\delta_{n}+\gamma$. Our strategy
is to show that if we remove one pair of the points $-x_{i},x_{i}$
from $\textup{supp }\delta_{n}$ and add the probability mass $2\delta\left\{ x_{i}\right\} $
to $\gamma$ as a uniform distribution on two small intervals around
$-x_{i},x_{i}$, we obtain a new measure $\nu^{\left(0\right)}$ and
we increase $a$: $a\left(\nu\right)<a\left(\nu^{\left(0\right)}\right)$.
So by removing the $2n+1$ points in $\textup{supp }\delta_{n}$ in
pairs (except for the origin where we remove a single point), we obtain
a monotonically increasing finite sequence $a\left(\nu^{\left(i\right)}\right)_{i=0,\ldots,n}$.
After $n+1$ steps, we have a sub-probability measure $\nu^{\left(n\right)}$
with no atoms and the bound $a\left(\nu^{\left(n\right)}\right)\leq1/3$
thus applies.

Let $x=x_{i}$ for some $i\in\left\{ 1,\ldots,n\right\} $ and set
$\alpha:=\delta\left\{ x\right\} >0$. Let $\varepsilon>0$ be given.
Then we choose $\eta>0$ with the properties
\begin{enumerate}
\item $2\left|\int_{\left(0,x-\eta\right]}\left(\nu\left(-z,z\right]\right)^{2}\nu\left(\textup{d}z\right)-\int_{\left(0,x\right)}\left(\nu\left(-z,z\right]\right)^{2}\nu\left(\textup{d}z\right)\right|<\varepsilon$,
\item $\left[x-\eta,x\right)\cap\textup{supp }\delta_{n}=\emptyset$,
\item $\gamma\left(x-\eta,x\right)<\varepsilon$,
\item $\nu\left(-x,x\right)-\nu\left(-\left(x-\eta\right),x-\eta\right]<\varepsilon$.
\end{enumerate}
Next define the sub-probability measure
\[
\pi_{\varepsilon}:=\delta_{n}-\alpha\left(\delta_{-x}+\delta_{x}\right)+\gamma-\gamma\left|\left(x-\eta,x\right)\right.+\alpha\left(\mathcal{U}\left(x-\eta,x\right)+\mathcal{U}\left(-x,-x+\eta\right)\right).
\]
Here $\mathcal{U}$ stands for a uniform distribution. We remove the
points $-x,x$ from $\delta_{n}$ as well as the continuous measure
$\gamma$ on the interval $\left(x-\eta,x\right)$ and add in the
probability mass $2\alpha$ on small intervals close to $-x$ and
$x$, respectively. Note that by property 2 above, $\gamma\left|\left(x-\eta,x\right)\right.=\nu\left|\left(x-\eta,x\right)\right.$.
Also, by 3, $\nu\left[-1/2,1/2\right]-\varepsilon<\pi_{\varepsilon}\left[-1/2,1/2\right]$.

We divide $a\left(\nu\right)$ into four summands
\begin{align*}
a\left(\nu\right) & =2\left[\int_{\left(0,x-\eta\right]}\left(\nu\left(-z,z\right]\right)^{2}\nu\left(\textup{d}z\right)+\int_{\left(x-\eta,x\right)}\left(\nu\left(-z,z\right]\right)^{2}\nu\left(\textup{d}z\right)+\right.\\
 & \quad\left.+\left(\nu\left(-x,x\right)+\alpha\right)^{2}\alpha+\int_{\left(x,1/2\right]}\left(\nu\left(-z,z\right]\right)^{2}\nu\left(\textup{d}z\right)\right].
\end{align*}
We define the terms
\begin{align*}
A_{\nu} & :=\int_{\left(0,x-\eta\right]}\left(\nu\left(-z,z\right]\right)^{2}\nu\left(\textup{d}z\right),\quad B_{\nu}:=\int_{\left(x-\eta,x\right]}\left(\nu\left(-z,z\right]\right)^{2}\nu\left(\textup{d}z\right),\quad C_{\nu}:=\int_{\left(x,1/2\right]}\left(\nu\left(-z,z\right]\right)^{2}\nu\left(\textup{d}z\right),
\end{align*}
and, analogously, we define $A_{\pi_{\varepsilon}},B_{\pi_{\varepsilon}},C_{\pi_{\varepsilon}}$
on the same intervals in each case. We note that

\[
B_{\nu}=\int_{\left(x-\eta,x\right)}\left(\nu\left(-z,z\right]\right)^{2}\nu\left(\textup{d}z\right)+\left(\nu\left(-x,x\right)+\alpha\right)^{2}\alpha.
\]
Let $T:=\int_{\left(x-\eta,x\right)}\left(\nu\left(-z,z\right]\right)^{2}\nu\left(\textup{d}z\right)$.
Due to property 1 above, we have
\[
\left|T\right|=\left|B_{\nu}-\left(\nu\left(-x,x\right)+\alpha\right)^{2}\alpha\right|<\varepsilon.
\]

Then we calculate these terms:

\uline{\mbox{$A$}:} $A_{\nu}=A_{\pi_{\varepsilon}}$.

\uline{\mbox{$B$}:}
\begin{align*}
B_{\pi_{\varepsilon}} & =\int_{\left(x-\eta,x\right]}\left(\pi_{\varepsilon}\left(-z,z\right]\right)^{2}\pi_{\varepsilon}\left(\textup{d}z\right)=\int_{\left(x-\eta,x\right]}\left(\nu\left(-\left(x-\eta\right),x-\eta\right]+\frac{\alpha}{\eta}\cdot2\left(z-\left(x-\eta\right)\right)\right)^{2}\frac{\alpha}{\eta}\textup{d}z,
\end{align*}
where we used property 2. We set $y:=\nu\left(-\left(x-\eta\right),x-\eta\right]$.
By a change of variables $u:=y+\frac{2\alpha}{\eta}\left(z-\left(x-\eta\right)\right)$,
we obtain
\begin{align*}
B_{\pi_{\varepsilon}} & =\int_{y}^{y+2\alpha}u^{2}\cdot\frac{1}{2}\textup{d}z=\frac{1}{6}\left(\left(y+2\alpha\right)^{3}-y^{3}\right).
\end{align*}
The inequality $\nu\left(-x,x\right)-\nu\left(-\left(x-\eta\right),x-\eta\right]>-\varepsilon$
holds. We calculate bounds for $B_{\pi_{\varepsilon}}$ in terms of
$B_{\nu}$:
\begin{align*}
B_{\pi_{\varepsilon}}-B_{\nu} & =\frac{1}{6}\left(6\alpha y^{2}+12\alpha^{2}y+8\alpha^{3}\right)-T-\left(\left(\nu\left(-x,x\right)\right)^{2}+2\alpha\nu\left(-x,x\right)+\alpha^{2}\right)^{2}\alpha\\
 & =\alpha\left(y^{2}-\left(\nu\left(-x,x\right)\right)^{2}\right)+2\alpha^{2}\left(y-\nu\left(-x,x\right)\right)+\frac{1}{3}\alpha^{3}-T,
\end{align*}
so we obtain the bounds
\[
-\alpha\cdot2\varepsilon-2\alpha^{2}\varepsilon-\varepsilon<B_{\pi_{\varepsilon}}-\left(B_{\nu}+\frac{1}{3}\alpha^{3}\right)<\alpha\cdot2\varepsilon+2\alpha^{2}\varepsilon+\varepsilon,
\]
and hence
\[
-5\varepsilon<B_{\pi_{\varepsilon}}-\left(B_{\nu}+\frac{1}{3}\alpha^{3}\right)<5\varepsilon.
\]

\uline{\mbox{$C$}:} Since $\nu\left|\left(\left[-1/2,-x\right)\cup\left(x,1/2\right]\right)\right.$
is equal to $\pi_{\varepsilon}\left|\left(\left[-1/2,-x\right)\cup\left(x,1/2\right]\right)\right.$,
we have
\[
0\geq C_{\pi_{\varepsilon}}-C_{\nu}>-2\varepsilon\int_{\left(x,1/2\right]}\nu\left(\textup{d}z\right)\geq-2\varepsilon.
\]
In the second step above, we used that, for all $x,y\in\left[0,1\right]$,
$\left|x^{2}-y^{2}\right|<2\left|x-y\right|$ is satisfied.

Putting together the three parts, we obtain the lower bound for
\begin{align*}
a\left(\pi_{\varepsilon}\right)-a\left(\nu\right) & =2\left(A_{\pi_{\varepsilon}}-A_{\nu}+B_{\pi_{\varepsilon}}-B_{\nu}+C_{\pi_{\varepsilon}}-C_{\nu}\right)=2\left(B_{\pi_{\varepsilon}}-B_{\nu}\right)+2\left(C_{\pi_{\varepsilon}}-C_{\nu}\right)\\
 & >2\left(\frac{1}{3}\alpha^{3}-5\varepsilon\right)-2\varepsilon=\frac{2}{3}\alpha^{3}-12\varepsilon.
\end{align*}
Similarly, the upper bound is
\begin{align*}
a\left(\pi_{\varepsilon}\right)-a\left(\nu\right) & <2\left(\frac{1}{3}\alpha^{3}+5\varepsilon\right)=\frac{2}{3}\alpha^{3}+10\varepsilon.
\end{align*}

If we let $\varepsilon$ go to 0, we see that $a\left(\pi_{\varepsilon}\right)-a\left(\nu\right)$
goes to $2/3\alpha^{3}$. Hence removing a pair of points from the
discrete measure $\delta_{n}$ and adding the probability mass to
the continuous measure $\gamma$ increases $a$ as claimed.

We have shown that for any probability measure $\mu$ $a\left(\mu\right)\leq1/3$
holds. Since $r\geq0$ holds as can be seen from Lemma \ref{lem:mu_rho_a_r},
the term
\[
r-am=r-a\cdot2r
\]
is non-negative if and only if $r=0$ or $a\leq1/2$. The latter inequality
we have proved holds for all probability measures $\mu$. Corollary
\ref{cor:a_r_0} says that $r=0$ if and only if $\mu=\delta_{0}$.
For all other measures $\mu$, the optimal weights will be composed
of a constant and a proportional part.

\subsection*{Proof of Proposition \ref{prop:FOSD_suff}}

We use the well known characterisation of FOSD in terms of increasing
functions (usually referred to as utility functions in the context
of consumer theory in microeconomics):
\begin{lem}
\label{lem:char_FOSD}We have $\left|Z\right|\succ\left|Y\right|$
if and only if for all increasing functions $u:\left[0,1/2\right]\rightarrow\mathbb{R}$
the inequality $E_{\mu}u\geq E_{\rho}u$ holds.
\end{lem}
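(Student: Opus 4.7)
The plan is to establish both directions by standard arguments from the theory of stochastic orderings, without invoking any heavy machinery beyond distribution functions.

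For the forward implication, suppose $|Z|\succ|Y|$. I would employ a quantile coupling: let $U$ be uniformly distributed on $[0,1]$, write $F_{|Z|}^{-1}, F_{|Y|}^{-1}$ for the generalised (right-continuous) quantile functions of $|Z|$ and $|Y|$, and set $\tilde{Z}:=F_{|Z|}^{-1}(U)$ and $\tilde{Y}:=F_{|Y|}^{-1}(U)$. The FOSD hypothesis $\IP(|Z|\leq x)\leq \IP(|Y|\leq x)$ translates directly into the pointwise inequality $F_{|Z|}^{-1}(s)\geq F_{|Y|}^{-1}(s)$ for all $s\in(0,1)$, so $\tilde{Z}\geq\tilde{Y}$ almost surely. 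Since $u$ is increasing, this yields $u(\tilde{Z})\geq u(\tilde{Y})$ almost surely, and taking expectations gives $E_{\mu}u = \IE\,u(\tilde{Z}) \geq \IE\,u(\tilde{Y}) = E_{\rho}u$.

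For the converse, I would test the hypothesis against the family of nondecreasing indicator functions $u_t := \mathbf{1}_{(t,1/2]}$ for each $t\in[0,1/2)$. One has $E_{\mu}u_t = \IP(|Z|>t)$ and $E_{\rho}u_t = \IP(|Y|>t)$, so $E_{\mu}u_t \geq E_{\rho}u_t$ rearranges to $\IP(|Z|\leq t)\leq \IP(|Y|\leq t)$, which is precisely the FOSD condition. If the convention reserves ``increasing'' for strictly increasing functions, the step functions $u_t$ are not admissible test functions, and I would instead approximate each $u_t$ from below by a sequence of continuous, strictly increasing piecewise-linear ramps (a nearly-flat slope away from $t$ joined to a steep rise across a shrinking neighbourhood of $t$), apply the hypothesis to each, and pass to the limit via monotone convergence.

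The only subtlety to navigate is pinning down the precise meaning of ``increasing'' in the statement; under the standard convention permitting nondecreasing functions, both directions are essentially one-line arguments, while otherwise the approximation step in the converse direction is the sole additional item and it is entirely routine.
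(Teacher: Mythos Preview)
Your argument is correct and is the standard textbook proof of this equivalence. The paper itself does not give a proof of this lemma at all: it merely invokes it as ``the well known characterisation of FOSD in terms of increasing functions (usually referred to as utility functions in the context of consumer theory in microeconomics)'' and moves on. So there is no proof in the paper to compare against; your quantile-coupling argument for the forward direction and the indicator-function test (with the approximation remark to cover the strict-increase convention) for the converse are exactly what one would supply to fill in this citation, and nothing further is needed.
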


We employ the previous lemma to show
\begin{lem}
These two statements hold:
\begin{enumerate}
\item If for all $z\in\left(0,1/2\right]$ $\mu\left[-z,z\right]\leq\rho\left[-z,z\right]$,
then, for all $z\in\left[0,1/2\right]$, $\mu\left(-z,z\right)\leq\rho\left(-z,z\right)$.
\item If for all $z\in\left(0,1/2\right]$ $\mu\left[-z,z\right]\leq\rho\left[-z,z\right]$,
then, for all $z\in\left[0,1/2\right]$, $\mu\left(-z,z\right]\leq\rho\left(-z,z\right]$.
\end{enumerate}
\end{lem}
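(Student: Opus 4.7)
My plan is to prove both parts using standard continuity-of-measure arguments together with the symmetry of $\mu$ and $\rho$, which is assumed throughout this section.

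For part 1, I would approximate the open interval $(-z,z)$ from inside by the closed symmetric intervals $[-z+1/n,\,z-1/n]$ for $n$ sufficiently large. These sets are nested increasing and their union is $(-z,z)$, so continuity from below of the finite measures $\mu$ and $\rho$ gives $\mu[-z+1/n,z-1/n]\nearrow\mu(-z,z)$ and similarly for $\rho$. For $n>1/z$ the endpoint $z-1/n$ lies in $(0,1/2]$, so the hypothesis applied to $z':=z-1/n$ yields $\mu[-z+1/n,z-1/n]\leq\rho[-z+1/n,z-1/n]$; passing to the limit then gives $\mu(-z,z)\leq\rho(-z,z)$. The boundary case $z=0$ is trivial because $(-0,0)=\emptyset$.

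For part 2, I would exploit symmetry to reduce to part 1 together with the hypothesis. Under the reflection $x\mapsto -x$ the half-open interval $(-z,z]$ is sent to $[-z,z)$, so symmetry gives $\mu(-z,z]=\mu[-z,z)$ and in particular $\mu\{-z\}=\mu\{z\}$. Splitting $[-z,z]=\{-z\}\cup(-z,z)\cup\{z\}$ into disjoint pieces yields $\mu[-z,z]=\mu(-z,z)+2\mu\{z\}$, and eliminating the atomic term $\mu\{z\}$ between these two identities produces
\[
2\,\mu(-z,z]~=~\mu[-z,z]\,+\,\mu(-z,z),
\]
with the analogous identity holding for $\rho$. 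The hypothesis bounds the first summand on the right by its $\rho$-counterpart, and part 1 does the same for the second; dividing by $2$ then delivers $\mu(-z,z]\leq\rho(-z,z]$. The case $z=0$ is again immediate.

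The only real obstacle is the small bookkeeping needed to verify the identity $2\mu(-z,z]=\mu[-z,z]+\mu(-z,z)$ cleanly so that the atoms $\mu\{\pm z\}$ and $\rho\{\pm z\}$ are correctly accounted for; once this is done, both statements follow from nothing more than continuity of finite measures and the symmetry of $\mu$ and $\rho$, with no additional structural assumption required.
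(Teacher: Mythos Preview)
Your proposal is correct and follows essentially the same approach as the paper: for part 1 the paper also approximates $(-z,z)$ from inside by closed intervals $[-t,t]$ with $t\nearrow z$ and uses continuity of the measure, and for part 2 the paper derives the identical identity $\mu(-z,z]=\tfrac{1}{2}\mu(-z,z)+\tfrac{1}{2}\mu[-z,z]$ from symmetry and then applies the hypothesis and part 1. The only cosmetic difference is that the paper obtains the identity by splitting $(-z,z]$ at $0$ rather than by explicitly isolating the atoms at $\pm z$, but the content is the same.
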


\begin{proof}
Let $z\in\left(0,1/2\right]$. Then, for all $t<z$, $\mu\left[-t,t\right]\leq\rho\left[-t,t\right]\leq\rho\left(-z,z\right)$.
By letting $t\nearrow z$, we obtain $\mu\left(-z,z\right)\leq\rho\left(-z,z\right)$
due to the continuity of the measure $\mu$, and we have proved the
first assertion. Next we show the second assertion:
\begin{align*}
\mu\left(-z,z\right] & =\mu\left(-z,0\right)+\mu\left\{ 0\right\} +\mu\left(0,z\right]\\
 & =\frac{\mu\left(-z,0\right)+\mu\left\{ 0\right\} +\mu\left(0,z\right)}{2}+\frac{\mu\left[-z,0\right)+\mu\left\{ 0\right\} +\mu\left(0,z\right]}{2}\\
 & =\frac{\mu\left(-z,z\right)}{2}+\frac{\mu\left[-z,z\right]}{2}\leq\frac{\rho\left(-z,z\right)}{2}+\frac{\rho\left[-z,z\right]}{2}=\rho\left(-z,z\right].
\end{align*}
We used the symmetry of $\mu$ and $\rho$ in steps 2 and 5 above
and the first assertion of the lemma in step 4.
\end{proof}
Now we calculate
\begin{align*}
r & =2\int_{\left(0,1/2\right]}z\rho\left(-z,z\right]\mu\left(\textup{d}z\right)\geq2\int_{\left(0,1/2\right]}y\rho\left(-y,y\right]\rho\left(\textup{d}y\right)\geq2\int_{\left(0,1/2\right]}y\mu\left(-y,y\right]\rho\left(\textup{d}y\right)=s.
\end{align*}
The first inequality is due to Lemma \ref{lem:char_FOSD}: The function
$z\mapsto u\left(z\right):=z\rho\left(-z,z\right]$ is increasing
and hence $E_{\mu}u\geq E_{\rho}u$ holds. The second inequality holds
by the definition of $\left|Z\right|\succ\left|Y\right|$. Therefore,
\[
am=a\left(r+s\right)\leq2ar,
\]
and $a\leq1/2$ is sufficient for $r-am\geq0$.

We turn the second statement of Proposition \ref{prop:FOSD_suff}.
Assume $\left|Y\right|\succ\left|Z\right|$. As we know from Theorem
\ref{thm:mu_rho},
\begin{align*}
a & =2\int_{\left(0,1/2\right]}\left(\rho\left(-z,z\right]\right)^{2}\mu\left(\textup{d}z\right)\leq2\int_{\left(0,1/2\right]}\left(\mu\left(-z,z\right]\right)^{2}\mu\left(\textup{d}z\right)\leq1/3.
\end{align*}
So a sufficient condition for $r-am\geq0$ is $m\leq3r$, which is
equivalent to $s\leq2r$.

\subsection*{Proof of Proposition \ref{prop:ribbon}}

The inequality $r-am\geq0$ we want to show is equivalent to $r\left(1-a\right)\geq as$.
The left hand side of this has a lower bound
\[
r\left(1-a\right)\geq\left(1-a\right)\cdot2\int_{\left(0,1/2\right]}z\rho\left(-z,z\right]c\rho\left(\textup{d}z\right),
\]
whereas the right hand side is bounded above by
\[
as\leq a\cdot2\int_{\left(0,1/2\right]}yC\rho\left(-y,y\right]\rho\left(\textup{d}y\right).
\]
So $c\left(1-a\right)\geq Ca$ is sufficient. This is itself equivalent
to
\begin{equation}
a\leq\frac{c}{c+C}.\label{eq:a_upper_bound_ribbon}
\end{equation}
We find two upper bounds for $a$:
\begin{align}
a & \leq C\cdot2\int_{\left(0,1/2\right]}\left(\rho\left(-z,z\right]\right)^{2}\rho\left(\textup{d}z\right),\label{eq:ribbon_1}\\
a & \leq\frac{1}{c^{2}}\cdot2\int_{\left(0,1/2\right]}\left(\mu\left(-z,z\right]\right)^{2}\mu\left(\textup{d}z\right).\label{eq:ribbon_2}
\end{align}
Theorem \ref{thm:mu_rho} says that both integrals on the right hand
side are bounded above by $1/6$. By stating inequalities of the right
hand side of (\ref{eq:a_upper_bound_ribbon}) and the right hand sides
of (\ref{eq:ribbon_1}) and (\ref{eq:ribbon_2}), respectively, we
obtain the sufficient conditions stated in Proposition \ref{prop:ribbon}:
\begin{align*}
\frac{C}{3}\leq\frac{c}{c+C} & \iff c\geq\frac{C^{2}}{3-C},\\
\frac{1}{3c^{2}}\leq\frac{c}{c+C} & \iff C\leq c\left(3c^{2}-1\right).
\end{align*}

The last claim follows from substituting $c=1/C$ into (\ref{eq:a_upper_bound_ribbon}).

\subsection*{Proof of Proposition \ref{prop:rho_unif_mu_any}}

We first refine Lemma \ref{lem:mu_neq_rho_a_r_s} using that $\rho=\mathcal{U}\left[-1/2,1/2\right]$:
\begin{align*}
a & =4E\left(Z^{2}\right),\quad r=2E\left(Z^{2}\right),\quad s=2\int_{0}^{1/2}y\mu\left(-y,y\right]\textup{d}y.
\end{align*}
So the inequality $r-am\geq0$ is equivalent to
\begin{align}
T\left(\mu\right):=\int_{0}^{1/2}y\mu\left(-y,y\right]\textup{d}y+E\left(Z^{2}\right)\leq\frac{1}{4}.\label{T_def}
\end{align}
The mapping $T:\mathcal{M}_{<\infty}\left(\left[-1/2,1/2\right]\right)\rightarrow\mathbb{R}$
from the set of all finite measures on $\left[-1/2,1/2\right]$ is
linear.
\begin{lem}
\label{lem:T_linear}For all $\nu_{1},\nu_{2}\in\mathcal{M}_{<\infty}\left(\left[-1/2,1/2\right]\right)$
and all $\alpha_{1},\alpha_{2}\in\mathbb{R}$, we have
\[
T\left(\alpha_{1}\nu_{1}+\alpha_{2}\nu_{2}\right)=\alpha_{1}T\left(\nu_{1}\right)+\alpha_{2}T\left(\nu_{2}\right).
\]
\end{lem}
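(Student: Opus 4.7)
The plan is to exploit the fact that $T(\mu)$ is a sum of two expressions, each of which is patently linear in the signed measure $\mu$. Concretely, write $T(\mu)=T_1(\mu)+T_2(\mu)$ where
\begin{align*}
T_1(\mu)~:=~\int_{0}^{1/2} y\,\mu(-y,y]\,\textup{d}y\qquad\text{and}\qquad T_2(\mu)~:=~\int_{[-1/2,1/2]} z^{2}\,\mu(\textup{d}z),
\end{align*}
so that $T_2(\mu)$ is just the second moment $E(Z^{2})$ reinterpreted as a set-function on $\mathcal{M}_{<\infty}([-1/2,1/2])$.

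For $T_2$ the linearity claim $T_2(\alpha_1\nu_1+\alpha_2\nu_2)=\alpha_1 T_2(\nu_1)+\alpha_2 T_2(\nu_2)$ is just the standard linearity of the Lebesgue integral of the fixed measurable function $z\mapsto z^{2}$ against a linear combination of finite measures. For $T_1$ I would first observe that for every fixed $y\in(0,1/2]$ the evaluation $\mu\mapsto\mu(-y,y]$ is linear, i.e.
\begin{align*}
(\alpha_1\nu_1+\alpha_2\nu_2)(-y,y]~=~\alpha_1\,\nu_1(-y,y]\,+\,\alpha_2\,\nu_2(-y,y],
\end{align*}
and then integrate against $y\,\textup{d}y$ on $(0,1/2]$ using once more the linearity of the Lebesgue integral.

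The only mildly delicate point — and hence the main (small) obstacle — is ensuring that both $T_1$ and $T_2$ are finite on all of $\mathcal{M}_{<\infty}([-1/2,1/2])$ so that the splitting is legitimate. This is immediate: for any finite measure $\nu$ on $[-1/2,1/2]$ we have $\nu(-y,y]\le\nu[-1/2,1/2]<\infty$, giving $|T_1(\nu)|\le \tfrac{1}{8}\,\nu[-1/2,1/2]$, and $|z|\le 1/2$ on the support gives $|T_2(\nu)|\le\tfrac{1}{4}\,\nu[-1/2,1/2]$. With finiteness in hand, adding the two linearity identities yields the claim; Fubini is not even needed, since the two summands are separate and each is a single Lebesgue integral.
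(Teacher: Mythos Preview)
Your proposal is correct and is precisely the direct verification the paper has in mind; the paper itself does not give a proof beyond stating ``This can be easily verified.'' Your decomposition $T=T_1+T_2$ together with the finiteness bounds is exactly the routine check that statement intends.
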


This can be easily verified.

Our strategy to prove Proposition \ref{prop:rho_unif_mu_any} is to
show the result for discrete measures with finite support, and then
use the fact that discrete measures are a dense subset of $\mathcal{M}_{1}\left(\left[-1/2,1/2\right]\right)$.
The proof for discrete measures proceeds by induction on the size
of $\left|\textup{supp }\mu\right|\leq2k+1$.

\uline{Base case:} Let $k=1$. Then the support of $\mu$ consists
of at most three points: 0 and two points $-x_{1},x_{1}$ such that
$0<x_{1}\leq1/2$. The measure $\mu$ is given by $\beta_{0}\delta_{0}+\beta_{1}\left(\delta_{-x_{1}}+\delta_{x_{1}}\right)$
and the constants satisfy $\beta_{0}+2\beta_{1}=1$. Set $\beta:=\beta_{0}$.
We calculate
\begin{align*}
T\left(\mu\right) & =\int_{0}^{1/2}y\mu\left(-y,y\right]\textup{d}y+E\left(Z^{2}\right)=\frac{1}{8}+\frac{x_{1}^{2}}{2}\left(1-\beta\right).
\end{align*}
We see that we can choose the parameter $\beta$ and the point $x_{1}$
independently of each other to maximise $T\left(\mu\right)$. This
maximum is $1/4$ and it is achieved if and only if $\beta=0$ and
$x_{1}=1/2$. This shows the claim for $\left|\textup{supp }\mu\right|\leq2\cdot1+1$.

\uline{Induction step:} Assume for all $\mu\in\mathcal{M}_{1}\left(\left[-1/2,1/2\right]\right),\left|\textup{supp }\mu\right|\leq2k+1$,
the inequality $T\left(\mu\right)\leq1/4$ holds and equality is achieved
if and only if $\mu=1/2\left(\delta_{-1/2}+\delta_{1/2}\right)$.
We show that the claim also holds for all $\nu\in\mathcal{M}_{1}\left(\left[-1/2,1/2\right]\right)$
with $\left|\textup{supp }\nu\right|\leq2\left(k+1\right)+1$. Let
$0<x_{1}<\cdots<x_{k+1}\leq1/2$ be the points of the support of $\nu$.
Then $\nu$ must have the form
\[
\nu=\left(1-\eta\right)\mu+\eta\frac{1}{2}\left(\delta_{-x_{k+1}}+\delta_{x_{k+1}}\right)
\]
for some $0\leq\eta\leq1$. By Lemma \ref{lem:T_linear},
\begin{align*}
T\left(\nu\right) & =\left(1-\eta\right)T\left(\mu\right)+\eta T\left(\frac{1}{2}\left(\delta_{-x_{k+1}}+\delta_{x_{k+1}}\right)\right)\leq\left(1-\eta\right)\frac{1}{4}+\eta\frac{1}{4}=\frac{1}{4}.
\end{align*}
Furthermore, as $\left|\textup{supp }\mu\right|$ and $\left|\textup{supp }\frac{1}{2}\left(\delta_{-x_{k+1}}+\delta_{x_{k+1}}\right)\right|$
are at most $2k+1$ and $1/2\notin\textup{supp }\mu$, equality holds
if and only if $x_{k+1}=1/2$ and $\eta=1$. Hence, the second part
of the claim holds for $\left|\textup{supp }\nu\right|\leq2\left(k+1\right)+1$,
too.

A well known result concerning probability measures is
\begin{thm}
Let $X$ be a separable metric space. Then the set of discrete probability
measures on $X$ is dense in $\mathcal{M}_{1}\left(X\right)$ if we
consider $\mathcal{M}_{1}\left(X\right)$ as a space endowed with
the topology of weak convergence.
\end{thm}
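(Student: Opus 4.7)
The plan is to produce, for any $\mu\in\mathcal{M}_{1}(X)$, an explicit sequence of discrete probability measures $\mu_{n}$ converging to $\mu$ in the weak topology. Because $X$ is separable metric, $\mathcal{M}_{1}(X)$ endowed with weak convergence is metrizable (e.g.\ by the L\'evy--Prokhorov metric), so sequential density is equivalent to density, and this reduction is all that is needed.

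First I would fix a countable dense subset $\{x_{k}\}_{k\ge 1}$ of $X$. For each $n\ge 1$, the open balls $B(x_{k},1/n)$ cover $X$ by density; I convert this cover into a Borel partition by setting $A_{k}^{(n)}:=B(x_{k},1/n)\setminus\bigcup_{j<k}B(x_{j},1/n)$. Each $A_{k}^{(n)}$ is Borel and contained in $B(x_{k},1/n)$, so $d(x,x_{k})<1/n$ for every $x\in A_{k}^{(n)}$. I then define the discrete probability measure
\[
\mu_{n}\;:=\;\sum_{k\ge 1}\mu(A_{k}^{(n)})\,\delta_{x_{k}},
\]
which is well defined because $\{A_{k}^{(n)}\}_{k}$ partitions $X$ and hence $\sum_{k}\mu(A_{k}^{(n)})=1$. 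By the Portmanteau theorem, to show $\mu_{n}\to\mu$ weakly it suffices to verify $\int f\,\textup{d}\mu_{n}\to\int f\,\textup{d}\mu$ for every bounded Lipschitz $f\colon X\to\mathbb{R}$. For such $f$ with Lipschitz constant $L$,
\[
\left|\int f\,\textup{d}\mu_{n}-\int f\,\textup{d}\mu\right|
=\left|\sum_{k\ge 1}\int_{A_{k}^{(n)}}\bigl(f(x_{k})-f(x)\bigr)\,\mu(\textup{d}x)\right|
\le L\sum_{k\ge 1}\int_{A_{k}^{(n)}}d(x,x_{k})\,\mu(\textup{d}x)\le\frac{L}{n}\,,
\]
which tends to $0$ as $n\to\infty$.

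The only real subtlety is that $X$ is only assumed to be separable, not Polish, so Ulam's tightness theorem is not available to reduce first to compactly supported $\mu$. The construction above bypasses tightness entirely: the estimate is \emph{global}, driven only by the shrinking diameter of the partition cells and the Lipschitz constant of the test function. The remaining analytic ingredient is the standard fact that bounded Lipschitz functions form a convergence-determining class for Borel probability measures on any metric space, which I would quote rather than reprove.
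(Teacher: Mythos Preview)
Your argument is correct. The construction via a shrinking Borel partition indexed by a countable dense set, together with the bounded-Lipschitz test in the Portmanteau theorem, is a clean and standard route; the global estimate $L/n$ avoids any appeal to tightness or completeness of $X$, exactly as you point out.

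For comparison: the paper does not actually prove this statement. It is quoted as a known result with a reference to Parthasarathy (Theorem~6.3 in \cite{Pa}), so there is no ``paper's proof'' to weigh your approach against. The paper does add the remark that one can in fact take the dense subset to consist of \emph{finitely} supported measures, which it then uses. Your $\mu_n$ are in general countably supported; if you wanted to match that stronger form, you could truncate the sum at a finite index $K_n$ chosen so that the tail mass is less than $1/n$ and place the leftover mass at any fixed point of $X$, which perturbs the bounded-Lipschitz integral by at most $2\|f\|_\infty/n$. This is a cosmetic addition rather than a gap in your argument for the theorem as stated.
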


See e.g.$\!\,$ Theorem 6.3 on page 44 in \cite{Pa}. Note that we can even
choose the subset of discrete probability measures with finite support
as a dense subset of $\mathcal{M}_{1}\left(X\right)$. We will now
show that the mapping $T$ is continuous. Let $\left(\mu_{n}\right)$
be a sequence in $\mathcal{M}_{1}\left(\left[-1/2,1/2\right]\right)$
with the limit $\mu\in\mathcal{M}_{1}\left(\left[-1/2,1/2\right]\right)$,
i.e. $\mu_{n}\xrightarrow[n\rightarrow\infty]{w}\mu$. We show that
both summands in the definition \eqref{T_def} of $T\left(\mu_n\right)$ converge.

The sequence of functions $y\mapsto y\mu_{n}\left(-y,y\right]$ is
uniformly bounded in $n$. $\mu_{n}\xrightarrow[n\rightarrow\infty]{w}\mu$
is equivalent to the convergence of the distribution functions. Let
$F_{n}$ be the distribution of $\mu_{n}$ for each $n$ and $F$
the distribution function of $\mu$. Then $F_{n}$ converges to $F$
pointwise on the set $C$ of continuity points of $F$. As $F$ is
monotonic, the complement $C^{c}$ is at most countable and hence
a Lebesgue null set. This means $y\mapsto y\mu_{n}\left(-y,y\right]$
converges almost everywhere on $\left(0,1/2\right]$. By dominated
convergence, the integrals $\int_{0}^{1/2}y\mu_{n}\left(-y,y\right]\textup{d}y$
converge to $\int_{0}^{1/2}y\mu\left(-y,y\right]\textup{d}y$.

The function $z\mapsto z^{2}$ is continuous and bounded on $\left(0,1/2\right]$.
Hence, $\mu_{n}\xrightarrow[n\rightarrow\infty]{w}\mu$ by definition
implies the convergence of $2\int_{\left(0,1/2\right]}z^{2}\mu_{n}\left(\textup{d}z\right)$
to $2\int_{\left(0,1/2\right]}z^{2}\mu\left(\textup{d}z\right)$.

We have previously shown that for all measures $\mu$ with finite
support $T\left(\mu\right)\leq1/4$. If $\mu$ is now any probability
measure, then there is a sequence of finitely supported measures $\mu_{n}$
that converge to $\mu$. As $T$ is continuous, this implies $T\left(\mu\right)\leq1/4$,
and the claim has been proved.

\subsection*{Proof of Theorem \ref{thm:fam_rho_mu}}

We first prove the four claims in Lemma \ref{lem:g_F_properties}
only assuming property 2 in Assumptions \ref{def:mu_contraction}.

\uline{Claim 1:} For all $y\in\left[0,1\right]$ $g\left(y\right)=2F_{\rho}\left(y/2\right)$.

Let $y\in\left[0,1\right]$. Since $F_{\rho}\left(\frac{1}{2}\right)=\frac{1}{2}$,
\[
\frac{g\left(y\right)}{2}=g\left(y\right)F_{\rho}\left(\frac{1}{2}\right)=F_{\rho}\left(\frac{y}{2}\right).
\]

\uline{Claim 2:} $\rho$ has no atoms, unless $\rho=\delta_{0}$.

We will write $f\left(x+\right)$ for the right limit $\lim_{t\searrow x}f\left(t\right)$
and $f\left(x-\right)$ for the left limit $\lim_{t\nearrow x}f\left(t\right)$
of any function $f$ and any $x\in\mathbb{R}$. Suppose $x>0$ is
an atom of $\rho$: $\rho\left\{ x\right\} >0$. Then $F_{\rho}\left(x-\right)<F_{\rho}\left(x\right)$.
Hence, for all $c<1$ $F_{\rho}\left(cx\right)=g\left(c\right)F_{\rho}\left(x\right)$.
Letting $c\nearrow1$, we get $F_{\rho}\left(x-\right)=g\left(1-\right)F_{\rho}\left(x\right)$,
so $0<g\left(1-\right)<1$. Thus we have, for all $y>0$, $F_{\rho}\left(y-\right)=g\left(1-\right)F_{\rho}\left(y\right)$
and $F_{\rho}\left(y-\right)<F_{\rho}\left(y\right)$, and $y$ is
an atom. This is a contradiction, since $\rho$ cannot have uncountably
many atoms. Therefore, $x>0$ cannot be an atom of $\rho$ and the
only possible atom is $0$. We next show that if $\rho\left\{ 0\right\} >0$,
then $\rho\left\{ 0\right\} =1$.

Suppose $0<\eta<1$ and $\rho\left|\left[0,1/2\right]\right.=\eta\delta_{0}+\frac{1-\eta}{2}\nu$,
where $\nu\in\mathcal{M}_{\leq1}\left(\left[0,1/2\right]\right)$
has no atoms. As $\nu$ has no atoms,
\[
\lim_{c\searrow0}F_{\rho}\left(cy\right)=F_{\rho}\left(0\right)
\]
holds for all $y\geq0$. On the other hand,
\[
\lim_{c\searrow0}g\left(c\right)F_{\rho}\left(y\right)=g\left(0+\right)F_{\rho}\left(y\right).
\]
Suppose $p:=g\left(0+\right)>0$. Fix some $b\in\left(0,1\right)$ such that $0<F_{\rho}\left(b\right)<\frac{1}{2}$. 
Then $p\left(F_{\rho}\left(\frac{1}{2}\right)-F_{\rho}\left(b\right)\right)>0$ and
\[
\lim_{c\searrow0}F_{\rho}\left(c\cdot\frac{1}{2}\right)=\lim_{c\searrow0}F_{\rho}\left(bc\right)=F_{\rho}\left(0\right)
\]
due to the right continuity of the distribution function $F_{\rho}$.
This implies $F_{\rho}\left(\frac{c}{2}\right)-F_{\rho}\left(bc\right)\rightarrow0$
as $c\searrow0$. But $g\left(c\right)\left(F_{\rho}\left(\frac{1}{2}\right)-F_{\rho}\left(b\right)\right)\rightarrow p>0$.
This is a contradiction and $g\left(0+\right)>0$ must be false. By
the first statement of this lemma, $F_{\rho}\left(0\right)=1/2\,g\left(0\right)\leq1/2\,g\left(0+\right)=0$.
The inequality is due to $g$ being increasing.

\uline{Claim 3:} $g$ is multiplicative: for all $x,y\geq0$, $g\left(xy\right)=g\left(x\right)g\left(y\right)$.

Let $x,y\geq0$. Then we have
\[
\frac{g\left(xy\right)}{2}=g\left(xy\right)F_{\rho}\left(\frac{1}{2}\right)=F_{\rho}\left(\frac{xy}{2}\right)=g\left(x\right)g\left(y\right)F_{\rho}\left(\frac{1}{2}\right)=\frac{g\left(x\right)g\left(y\right)}{2}.
\]

\uline{Claim 4:} $F_{\rho}$ has the form $F_{\rho}\left(y\right)=2^{t-1}y^{t}$
for some fixed $t\geq0$.

Since $g$ is multiplicative by statement 3, the transformation $x\mapsto f\left(x\right):=\ln\left(g\left(e^{x}\right)\right)$
is additive. Due to statement 1, $g$ is increasing. Hence, we can
apply the Cauchy functional condition to conclude that $f$ is linear,
i.e.$\!\,$ there is some $t\in\mathbb{R}$ such that $f\left(x\right)=tx$
for all $x\geq0$. As $f$ is increasing, $t$ must be non-negative.
So
\begin{align*}
\ln\left(g\left(e^{x}\right)\right) & =tx\iff g\left(e^{x}\right)=e^{tx}
\end{align*}
and
\[
g\left(x\right)=g\left(e^{\ln x}\right)=\left(e^{\ln x}\right)^{t}=x^{t}.
\]
Using statement 1, we obtain, for all $y\in\left[0,1/2\right]$,
\[
F_{\rho}\left(y\right)=\frac{g\left(2y\right)}{2}=\frac{\left(2y\right)^{t}}{2}=2^{t-1}y^{t}.
\]
From now on, we assume all three properties in Assumptions \ref{def:mu_contraction}
and show Theorem \ref{thm:fam_rho_mu}. First, we note that the homogeneity
property 2 of the measure $\rho$ is inherited by $\mu$:
\begin{lem}
For all $x\geq0$ and all $y\in\left[0,1/2\right]$ such that $xy\leq1/2$,
we have $\mu\left(0,xy\right)=g\left(x\right)\mu\left(0,y\right)$.
\end{lem}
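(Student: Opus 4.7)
The plan is to derive the homogeneity of $\mu$ by directly translating it into the known homogeneity of $\rho$ via the contraction relation $\mu(cA) = \rho(A)$ (property 3 of Assumptions \ref{def:mu_contraction}).

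First, I would rewrite property 3 by setting $B := cA$, which yields $\mu(B) = \rho(B/c)$, where $B/c := \{b/c : b \in B\}$. Applied to the half-open interval $B = (0, s)$ with $s > 0$ small enough that $s/c \leq 1/2$, i.e.\ $s \in [0, c/2]$, this gives the transfer identity
\[
\mu(0, s) ~=~ \rho(0, s/c).
\]
This is the bridge between the two measures on positive half-intervals, and is the only consequence of property 3 that I will need.

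Next, given $x \geq 0$ and $y$ in the relevant range, I would apply the transfer identity twice (at $s = xy$ and $s = y$) and then invoke the homogeneity of $\rho$ from property 2 of Assumptions \ref{def:mu_contraction}:
\[
\mu(0, xy) ~=~ \rho(0, xy/c) ~=~ \rho\bigl(0,\; x\cdot (y/c)\bigr) ~=~ g(x)\,\rho(0, y/c) ~=~ g(x)\,\mu(0, y).
\]
The application of property 2 of $\rho$ is legitimate because $x \geq 0$, $y/c \in [0,1/2]$, and $x\cdot(y/c) = xy/c \leq 1/2$, matching exactly the hypotheses of that property.

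The argument is essentially a one-line change of variables once the transfer identity is in place, so no serious obstacle arises. The only delicate bookkeeping concerns the effective domain: since $\mu$ is concentrated on $[-c/2, c/2]$ (by taking $A = [-1/2,1/2]$ in property 3), the identity has genuine content only for $y$ and $xy$ in $[0, c/2]$, which is the range in which the transfer identity and the homogeneity of $\rho$ both apply directly. For $y$ or $xy$ exceeding $c/2$, the values $\mu(0,\cdot)$ saturate at $1/2$ (using that $\rho$, hence also $\mu$, has no atoms by Lemma \ref{lem:g_F_properties}), so the statement reduces to the non-saturated regime anyway.
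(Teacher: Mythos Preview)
Your core chain of equalities is correct and is precisely the straightforward argument the paper has in mind (the paper omits the proof entirely, calling it straightforward).

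However, your final paragraph about the saturated regime does not work, and in fact cannot work: the lemma as stated in the paper is \emph{false} for $y>c/2$. For a concrete counterexample, take $t=1$ and $c=1/4$, so that $\rho$ is uniform on $[-1/2,1/2]$, $\mu$ is uniform on $[-1/8,1/8]$, and $g(x)=x$. With $y=1/2$ and $x=1/2$ one has $xy=1/4\le 1/2$, yet $\mu(0,xy)=\mu(0,1/4)=1/2$ while $g(x)\,\mu(0,y)=\tfrac12\cdot\tfrac12=1/4$. Your proposed ``reduction'' fails because replacing $y$ by $c/2$ on the right-hand side does not correspondingly replace $xy$ by $x\cdot c/2$ on the left. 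The natural range for the inherited homogeneity of $\mu$ is $y\in[0,c/2]$ with $xy\le c/2$, not $[0,1/2]$; the paper's stated range is a minor imprecision. It causes no harm downstream: the lemma is invoked only once (inside the proof of Theorem~\ref{thm:fam_rho_mu}, in the identity $\mu(0,cz)=g(c)\,\mu(0,z)$ for $z\in(0,c/2)$), and there both arguments lie in $(0,c/2)$, where your transfer-identity argument is valid as written.
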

The proof is straightforward and we thus omit it.

We next calculate an inequality equivalent to $r-am\geq0$:
\begin{align*}
r\left(1-a\right)\geq as & \quad\iff\\
2\int_{\left(0,1/2\right]}z\rho\left(-z,z\right]\mu\left(\textup{d}z\right)\left[1-2\int_{\left(0,1/2\right]}\left(\rho\left(-z,z\right]\right)^{2}\mu\left(\textup{d}z\right)\right]\geq\\
2\int_{\left(0,1/2\right]}\left(\rho\left(-z,z\right]\right)^{2}\mu\left(\textup{d}z\right)\cdot2\int_{\left(0,1/2\right]}y\mu\left(-y,y\right]\rho\left(\textup{d}y\right) & \quad\iff\\
\int_{\left(0,c/2\right)}z\rho\left(0,z\right)\mu\left(\textup{d}z\right)\left[1-8\int_{\left(0,1/2\right)}\left(\mu\left(0,cz\right)\right)^{2}\mu\left(\textup{d}z\right)\right]\geq\\
8\int_{\left(0,1/2\right)}\left(\mu\left(0,cz\right)\right)^{2}\mu\left(\textup{d}z\right)\left[\int_{\left(0,c/2\right)}y\mu\left(0,y\right)\rho\left(\textup{d}y\right)+\frac{1}{2}\int_{\left(c/2,1/2\right)}y\rho\left(\textup{d}y\right)\right],
\end{align*}
where we used the symmetry of $\rho$ and $\mu$, the fact that $\rho$
-- and hence $\mu$ -- has no atoms, and $\mu\left(0,cz\right)=\rho\left(0,z\right)$.
The left hand side of the last inequality above can be expressed as
\[
\int_{\left(0,c/2\right)}z\rho\left(0,z\right)\mu\left(\textup{d}z\right)\left[1-\frac{g\left(c\right)^{2}}{3}\right],
\]
where we applied Theorem \ref{thm:mu_rho}. The right hand side can
be treated similarly:
\[
\frac{g\left(c\right)^{2}}{3}\left[\int_{\left(0,c/2\right)}y\mu\left(0,y\right)\rho\left(\textup{d}y\right)+\frac{1}{2}\int_{\left(c/2,1/2\right)}y\rho\left(\textup{d}y\right)\right].
\]
We note that the inequality
\begin{align}
\left[3-g\left(c\right)^{2}\right]\int_{\left(0,c/2\right)}z\rho\left(0,z\right)\mu\left(\textup{d}z\right) & \geq g\left(c\right)^{2}\left[\int_{\left(0,c/2\right)}y\mu\left(0,y\right)\rho\left(\textup{d}y\right)+\frac{1}{2}\int_{\left(c/2,1/2\right)}y\rho\left(\textup{d}y\right)\right]\label{eq:r_am_equiv_1}
\end{align}
is thus equivalent to our original inequality. Now we show
\begin{lem}
We can switch the measures in the integrals as follows:
\[
\int_{\left(0,c/2\right)}z\rho\left(0,z\right)\mu\left(\textup{d}z\right)=\int_{\left(0,c/2\right)}y\mu\left(0,y\right)\rho\left(\textup{d}y\right).
\]
\end{lem}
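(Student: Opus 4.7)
The plan is to show that on the interval $(0, c/2)$ both the measure $\mu$ and the distribution function $y \mapsto \mu(0,y)$ coincide with their $\rho$-counterparts up to the same scalar factor $1/g(c)$. Once this symmetric scaling is in place, both sides of the claimed identity reduce to one and the same integral with respect to $\rho$, and the equality is immediate.

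First I would establish the distribution-function relation $\mu(0,y) = \rho(0,y)/g(c)$ for every $y \in (0, c/2)$. Indeed, property 3 in Assumptions \ref{def:mu_contraction} applied to $A = (0, y/c)$ gives $\mu(0,y) = \rho(0, y/c)$; the homogeneity condition (property 2 of Assumptions \ref{def:mu_contraction}) together with the multiplicativity of $g$ from Lemma \ref{lem:g_F_properties} then yields $\rho(0, y/c) = g(1/c)\rho(0,y) = \rho(0,y)/g(c)$, where I use that $g(c)g(1/c) = g(1) = 1$ and that $y/c \leq 1/2$ throughout the range. Since $\rho$ has no atoms (property 1 of Assumptions \ref{def:mu_contraction}), neither does $\mu$ on $(0, c/2)$, so this distribution-function identity upgrades to the measure identity $\mu(\textup{d}z) = \rho(\textup{d}z)/g(c)$ on Borel subsets of $(0, c/2)$ by the uniqueness of an atomless measure given its distribution function.

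Substituting these two relations into the claim, the left-hand side becomes
\begin{align*}
\int_{(0,c/2)} z\, \rho(0,z)\, \mu(\textup{d}z) ~=~ \frac{1}{g(c)} \int_{(0,c/2)} z\, \rho(0,z)\, \rho(\textup{d}z),
\end{align*}
while the right-hand side becomes
\begin{align*}
\int_{(0,c/2)} y\, \mu(0,y)\, \rho(\textup{d}y) ~=~ \frac{1}{g(c)} \int_{(0,c/2)} y\, \rho(0,y)\, \rho(\textup{d}y),
\end{align*}
and these agree after renaming the integration variable. No serious obstacle is anticipated; the only delicate point is the upgrade from the distribution-function identity to a measure identity, which is clean precisely because $\rho$ (hence $\mu$) is atomless on the relevant interval.
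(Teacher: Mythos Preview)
Your argument is correct and rests on the same identity the paper uses, namely $g(c)\,\mu(0,z)=\rho(0,z)$ on $(0,c/2)$, which as you note upgrades to $g(c)\,\mu(\textup{d}z)=\rho(\textup{d}z)$ there since both measures are atomless. The paper's proof threads this through a chain of Lebesgue--Stieltjes integrator identities ($g(c)\,\textup{d}F_{\mu}(z)=\textup{d}F_{\mu}(cz)=\textup{d}F_{\rho}(z)$) and invokes a substitution formula, whereas you state the scaling relation once and substitute it symmetrically into both sides; your route is slightly more transparent but not materially different.
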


\begin{proof}
The proof uses the Lebesgue-Stieltjes versions of the integrals. Let
$F_{\mu}$ be the distribution function of $\mu\left|\left[0,1/2\right]\right.$.
We have
\begin{align*}
\int_{\left(0,c/2\right)}z\rho\left(0,z\right)\mu\left(\textup{d}z\right) & =\int_{\left(0,c/2\right)}z\mu\left(0,cz\right)\mu\left(\textup{d}z\right)=g\left(c\right)\int_{\left(0,c/2\right)}z\mu\left(0,z\right)\textup{d}F_{\mu}\left(z\right)\\
 & =\int_{\left(0,c/2\right)}z\mu\left(0,z\right)\textup{d}\left(g\left(c\right)F_{\mu}\left(z\right)\right)=\int_{\left(0,c/2\right)}z\mu\left(0,z\right)\textup{d}F_{\mu}\left(cz\right)\\
 & =\int_{\left(0,c/2\right)}z\mu\left(0,z\right)\textup{d}F_{\rho}\left(z\right)=\int_{\left(0,c/2\right)}y\mu\left(0,y\right)\rho\left(\textup{d}y\right)
\end{align*}
by a substitution formula (see e.g. \cite{FaTe}).
\end{proof}
Using this lemma, we can restate (\ref{eq:r_am_equiv_1}) as
\begin{equation}
\left[3-2g\left(c\right)^{2}\right]\int_{\left(0,c/2\right)}z\rho\left(0,z\right)\mu\left(\textup{d}z\right)\geq\frac{g\left(c\right)^{2}}{2}\int_{\left(c/2,1/2\right)}y\rho\left(\textup{d}y\right).\label{eq:r_am_equiv_2}
\end{equation}
Next we prove
\begin{lem}
The following equality holds:
\[
\frac{1}{g\left(c\right)}\int_{\left(0,c/2\right)}z\rho\left(0,z\right)\mu\left(\textup{d}z\right)=c\int_{\left(0,1/2\right)}y\rho\left(0,y\right)\rho\left(\textup{d}y\right).
\]
\end{lem}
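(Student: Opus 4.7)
The plan is to recognise the left-hand integral as a pushforward of the right-hand one, and then to expose the factor of $g(c)$ via the homogeneity property of $\rho$. The main obstacle is essentially bookkeeping: correctly applying the contraction property to rewrite a $\mu$-integral as a $\rho$-integral, and then identifying $\rho(0,cy)$ through the multiplicative identity from the second property in Assumptions \ref{def:mu_contraction}. There are no convergence issues to worry about since all integrands are bounded and the measures are finite.

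First I would note the key structural consequence of the contraction property $\mu(cA)=\rho A$. Setting $T(y)=cy$, this identity says exactly that $\mu$ is the pushforward of $\rho$ under the dilation $T$. Consequently, for any bounded measurable function $\varphi$,
\begin{align*}
   \int \varphi(z)\,\mu(\textup{d}z)~=~\int \varphi(cy)\,\rho(\textup{d}y).
\end{align*}
Applying this with $\varphi(z)=z\,\rho(0,z)\,\mathbf{1}_{(0,c/2)}(z)$ (noting that $z\in(0,c/2)$ corresponds to $y\in(0,1/2)$ via $z=cy$) would give
\begin{align*}
   \int_{(0,c/2)} z\,\rho(0,z)\,\mu(\textup{d}z)~=~\int_{(0,1/2)} (cy)\,\rho(0,cy)\,\rho(\textup{d}y).
\end{align*}

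Next, I would apply the homogeneity property 2 from Assumptions \ref{def:mu_contraction} to the inner probability: taking $x=c$ and replacing $y$ by $y$ in that property yields $\rho(0,cy)=g(c)\,\rho(0,y)$ for all $y\in[0,1/2]$ (since $cy\leq c/2<1/2$). Substituting this into the integral gives
\begin{align*}
   \int_{(0,c/2)} z\,\rho(0,z)\,\mu(\textup{d}z)~=~c\,g(c)\int_{(0,1/2)} y\,\rho(0,y)\,\rho(\textup{d}y).
\end{align*}
Dividing both sides by $g(c)$ (which is strictly positive since by Lemma \ref{lem:g_F_properties} $g(c)=c^{t}$ and $t>0$ under condition 1 of the Assumptions) yields the desired equality. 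No estimation is needed and the proof is a two-line application of the pushforward rule combined with the multiplicative structure of $\rho$.
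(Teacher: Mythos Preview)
Your proof is correct and uses essentially the same idea as the paper: both arguments reduce to the change of variables $z=cy$ (you phrase it as $\mu$ being the pushforward of $\rho$ under the dilation $T(y)=cy$, the paper writes it as the Lebesgue--Stieltjes substitution $\textup{d}F_{\rho}(x/c)=\textup{d}F_{\mu}(x)$) together with the homogeneity identity $\rho(0,cy)=g(c)\,\rho(0,y)$. Your pushforward formulation is a bit cleaner, but the content is identical.
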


\begin{proof}
We calculate
\begin{align*}
\int_{\left(0,1/2\right)}cz\rho\left(0,z\right)\rho\left(\textup{d}z\right) & =\int_{\left(0,1/2\right)}\frac{1}{g\left(c\right)}g\left(c\right)cy\rho\left(0,y\right)\textup{d}F_{\rho}\left(y\right)=\frac{1}{g\left(c\right)}\int_{\left(0,c/2\right)}g\left(c\right)x\rho\left(0,\frac{x}{c}\right)\textup{d}F_{\rho}\left(\frac{x}{c}\right)\\
 & =\frac{1}{g\left(c\right)}\int_{\left(0,c/2\right)}x\rho\left(0,x\right)\textup{d}F_{\mu}\left(x\right).
\end{align*}
\end{proof}
Together the last lemma and statement 4 of Lemma \ref{lem:g_F_properties}
imply the inequality (\ref{eq:r_am_equiv_2}) is equivalent to
\[
c^{1-t}\left[3-2c^{2t}\right]\geq\frac{1+2t}{1+t}\left(1-c^{1+t}\right).
\]
We define the function $h$ by setting
\[
h\left(c\right):=c^{1+t}-3\left(1+t\right)c^{1-t}+1+2t.
\]
The original inequality holds if and only if $h\left(c\right)\leq0$.
We calculate the first derivative of $h$,
\[
h'\left(c\right)=\left(1+t\right)c^{t}-3\left(1+t\right)\left(1-t\right)c^{-t}
\]
and the critical point is given by
\begin{equation}
c=x_{0}:=\left(3\left(1-t\right)\right)^{\frac{1}{2t}}\label{eq:x_0},
\end{equation}
which is positive if $t<1$. The second derivative of $h$ is
\[
h''\left(c\right)=\left(1+t\right)tc^{t-1}+3\left(1+t\right)\left(1-t\right)tc^{-\left(1+t\right)}.
\]
The sign of the second derivative is positive for all $c>0$. We also
note that $h\left(0\right)>0$ and $h\left(1\right)<0$. The positive
sign of $h''$ on $\left(0,\infty\right)$ implies that $h'$ is strictly
increasing on $\left[0,\infty\right)$. Furthermore, $h$ is strictly
decreasing on $\left[0,x_{0}\right)$ and strictly increasing on $\left[x_{0},\infty\right)$.
It is clear from (\ref{eq:x_0}) that $x_{0}\geq1$ if and only if
$t\leq2/3$. When this holds, $h$ is strictly decreasing on $\left[0,1\right]$.
For $2/3<t<1$, $h$ is first decreasing and then increasing. However,
as $h\left(1\right)<0$, we have for all $t<1$ a uniquely determined
$c_{0}\in\left(0,x_{0}\wedge1\right)$ such that $h\left(c_{0}\right)=0$
and $c_{0}$ is the only zero of $h$ on the interval $\left[0,1\right]$.
For $t=1$, the claim follows from Proposition \ref{prop:rho_unif_mu_any}.
For $t>1$, we note that $h$ is undefined at $c=0$ but $\lim_{c\searrow0}h\left(c\right)=-\infty$.
As $h\left(1\right)<0$ holds for any value of $t$ and $h$ is continuous,
$h<0$ on $\left(0,1\right)$ is clear. This shows the claim concerning
the sign of $r-am$.

As for the behaviour of the critical $c_{0}$ at which $r=am$, by
inspecting (\ref{eq:x_0}), we see that $\lim_{t\nearrow1}x_{0}=0$
and as $0<c_{0}<x_{0}$, the second claim follows.

Contact information of the authors:

werner.kirsch@fernuni-hagen.de

gabor.toth@iimas.unam.mx

\end{document}